\numberwithin{equation}{section}
\newtheorem{theorem}{Theorem}[section]
\newtheorem{Lemma}[theorem]{Lemma}
\newtheorem{corollary}[theorem]{Corollary}
\newtheorem{proposition}[theorem]{Proposition}
\theoremstyle{definition}
\newtheorem{definition}[theorem]{Definition}
\theoremstyle{remark}
\newtheorem{remark}[theorem]{Remark}
\newcommand{\R}{\mathbb{R}}
\newcommand{\Z}{\mathbb{Z}}
\newcommand{\uc}{\mathbb{S}}
\newcommand{\D}{\mathbb{D}}
\newcommand{\N}{\mathbb{N}}
\newcommand{\M}{\mathcal{M}}
\newcommand{\C}{\mathbb{C}}
\newcommand{\Ca}{\mathcal{C}}
\newcommand{\lam}{\mathcal{L}}
\newcommand{\hlam}{\widehat{\mathcal{L}}}
\newcommand{\qml}{\mathrm{QML}}
\newcommand{\sm}{\setminus}
\newcommand{\ch}{\mathrm{CH}}
\newcommand{\Bd}{\mathrm{Bd}}
\newcommand{\ol}{\overline}
\newcommand{\disk}{\mathbb{D}}
\newcommand{\cdisk}{\ol{\mathbb{D}}}
\newcommand{\sh}{\mathrm{SH}}
\newcommand{\si}{\sigma}
\newcommand{\la}{\lambda}
\newcommand{\ga}{\gamma}
\newcommand{\hell}{\hat{\ell}}
\newcommand{\tell}{\Tilde{\ell}}
\newcommand{\A}{\mathcal{A}}
\newcommand{\hc}{\hat{c}}
\noindent\textit{Proof of (#1)}%
\begin{document}

\title{Lavaurs algorithm for cubic\\ symmetric polynomials}\footnote{\date{\today}}

\author[Blokh]{Alexander Blokh}
\author[Oversteegen]{Lex~G.~Oversteegen}
\author[Selinger]{Nikita Selinger}
\author[Timorin]{Vladlen Timorin}
\author[Vejandla]{Sandeep chowdary vejandla}

\address[Alexander~Blokh, Lex~Oversteegen, Nikita~Selinger and Sandeep~Vejandla]
{Department of Mathematics\\ University of Alabama at Birmingham\\
Birmingham, AL 35294-1170}

\address[Vladlen~Timorin]
{Faculty of Mathematics\\
HSE University, Russian Federation\\
6 Usacheva St., 119048 Moscow
}

\thanks{The second named author was partially supported by NSF-DMS-1807558}

\thanks{The fourth named author was partially supported by the HSE University Basic Research Program}

\subjclass[2010]{Primary 37F20; Secondary 37F10}




\begin{abstract}
To investigate the degree $d$ connectedness locus, Thur\-ston studied
\emph{$\sigma_d$-invariant laminations}, where $\sigma_d$ is the
$d$-tupling map on the unit circle, and built a topological model for the
space of quadratic polynomials $f_c(z) = z^2 +c$. In the same spirit, we
consider the space of all \emph{cubic symmetric polynomials}
$f_\lambda(z)=z^3+\la^2 z$ in three articles. In the first one we construct the
lamination $C_sCL$ together with the induced factor space $\mathbb{S}/C_sCL$ of
the unit circle $\mathbb{S}$. As will be verified in the third paper, $\mathbb{S}/C_sCL$
is a monotone model of the \emph{cubic symmetric connectedness locus}, i.e., the
space of all cubic symmetric polynomials with connected Julia sets. In the
present paper, the second in the series,  we develop an
algorithm for constructing $C_sCL$
analogous to the Lavaurs algorithm for constructing a combinatorial model
$\mathcal{M}^{comb}_2$ of the Mandelbrot set $\mathcal{M}_2$.
\end{abstract}

\maketitle

\section{Introduction}
We use standard notation ($\R, \C$ for the real/complex numbers, $\disk$ for
the unit disk centered at the origin, etc). The Riemann sphere is denoted by
$\hat{\C}$. The boundary (in $\C$) of a set $X\subset\C$ is denoted by
$\Bd(X)$.  We consider \emph{only complex}
polynomials $P$; 
for such a $P$, let $J_P$ be its Julia set and $K_P$ be its filled Julia set.
A \emph{chord} is a closed straight line segment with endpoints on the unit
circle $\uc=\Bd(\disk)$.

The connectedness locus $\M_d$ is the space of polynomials of degree $d$, up
to affine conjugacy, with connected Julia sets. A fundamental problem
is to understand the structure of $\M_d$. Major progress
has been made for $d=2$ but much less is known for $d>2$. Thurston
\cite{thu85} introduced geodesic invariant laminations to provide a combinatorial model for
$\mathcal M_2$. A \emph{lamination} $\lam$ is a compact set of chords, called
\emph{leaves}, 
that are pairwise disjoint in $\disk$ (equivalently, do not \emph{cross}).

More precisely, Thurston constructs a lamination $QML$ whose leaves tag all
invariant quadratic laminations (for $d\ge 2$, a lamination is
\emph{invariant} if it is invariant under the map $\sigma_d(z)=z^d$
restricted to $\uc$, see Definition \ref{inv-lam}). It can be shown that the
quotient space $\M_2^{Comb}=\uc/QML$ is a monotone image of $\Bd(\M_2)$
(conjecturally, this map is a homeomorphism), cf. \cite{thu85}.
No such models exist for $d>2$.

A natural next object of study is $\M_3$, and some slices of $\M_3$ have
already been considered. In \cite{bostv1} we construct the lamination $C_sCL$
(this stands for \emph{cubic symmetric comajor lamination}) together with the
induced factor space $\uc/C_sCL$ of the unit circle $\uc$. In \cite{bostv3}
we verify that $\uc/C_sCL$ is a monotone model of the \emph{cubic symmetric
connected locus}, i.e. the space $\mathcal M_{3,s}$ of \emph{symmetric cubic
polynomials} $P(z)=z^3+\lambda^2 z$ with connected Julia sets.

To understand the structure of $C_sCL$ and to be able to obtain suitable
pictures of this space in this paper we provide an algorithm for constructing
a dense set of leaves in $C_sCL$ (see Figure 1).

\begin{figure}
   \centering
        \includegraphics[height=5cm]{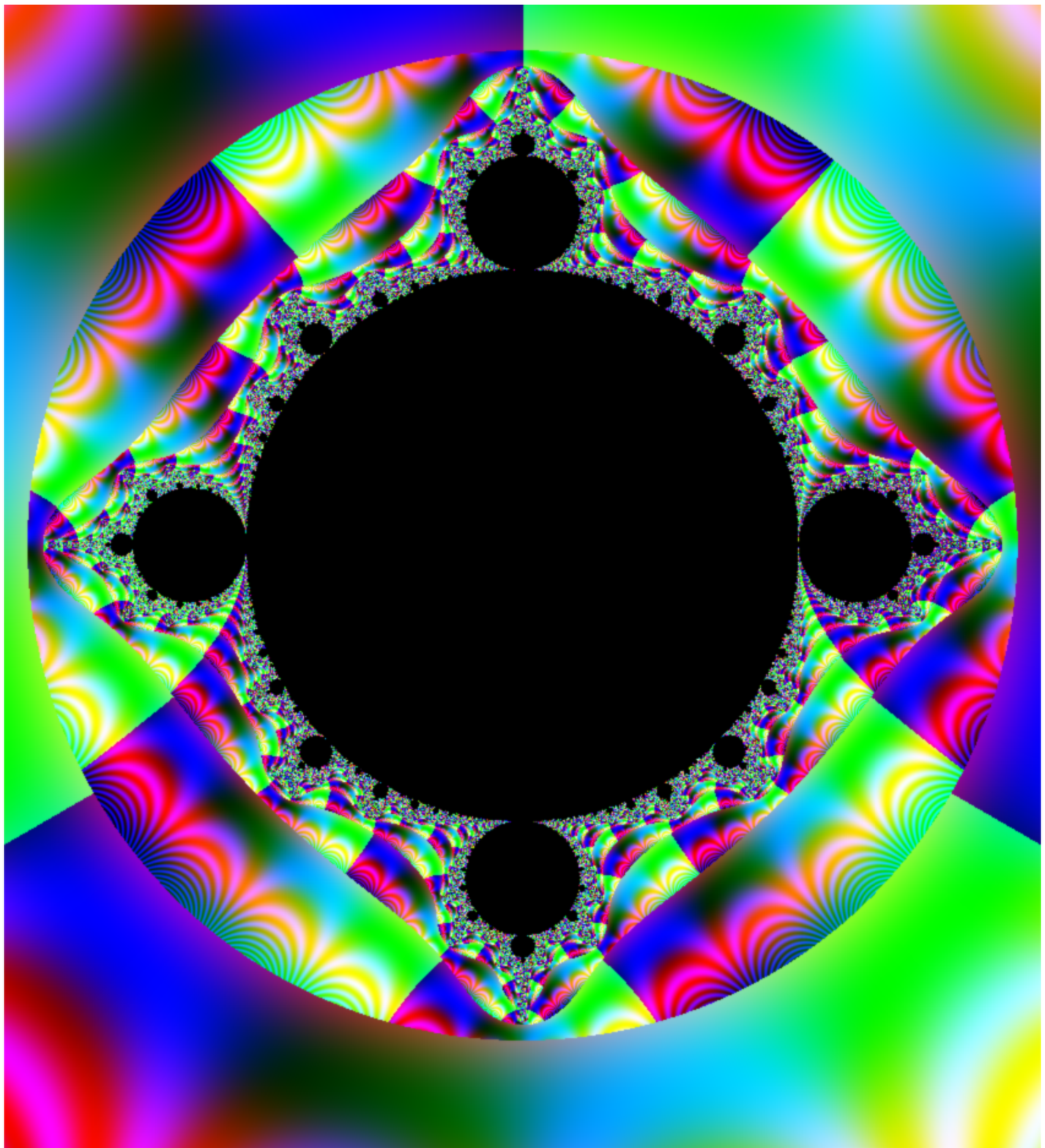}
        \includegraphics[height=5cm]{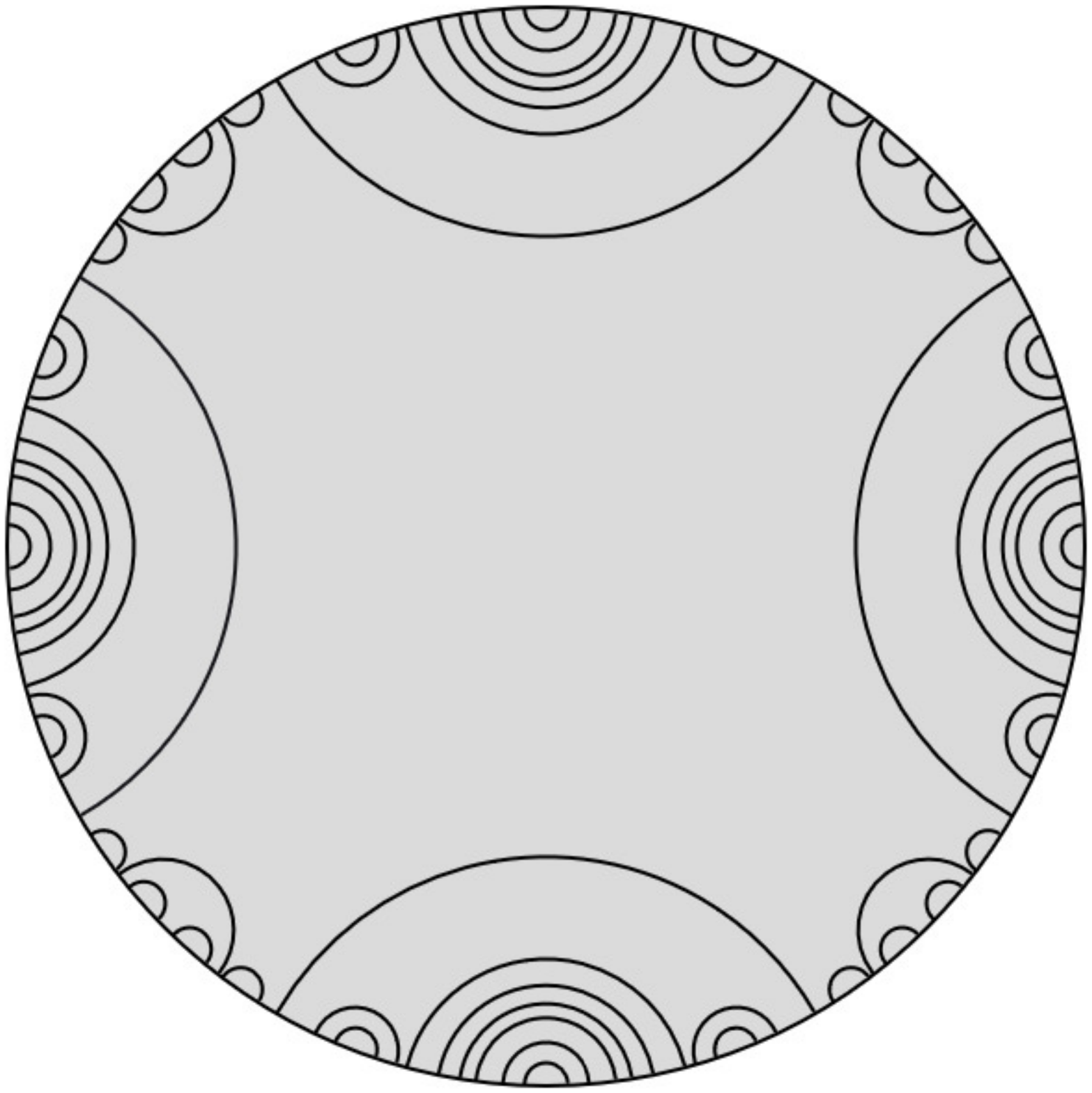}
     \caption{The parameter space of symmetric cubic  polynomials $\mathcal M_{3,s}$ on the left and the   Symmetric Cubic Comajor Lamination $C_sCL$ on the right.}
\end{figure}

\section{Laminations: classical definitions}

\subsection{Invariant laminations}

The reader is referred to \cite{mil00,thu85} for basic notions of complex polynomial dynamics on $\C$,
 including Fatou and Julia sets, external rays, landing etc.

Identify $\uc$ with $\R/\Z$ and define the map $\sigma_d:\uc \rightarrow \uc$ for $d \geq 2$
 as $\sigma_d(z)=dz \mod 1$; clearly, $\sigma_d$ is locally one-to-one on $\mathbb{S}$.
A complex polynomial $P$ with locally connected Julia set $J_P$ gives rise to
 an equivalence relation $\sim_P$ on $\uc$ so that $x \sim_P y$ if and only if
 the external rays of arguments $x$ and $y$ land at the same point of $J_P$.
Equivalence classes of $\sim_P$ have pairwise disjoint convex hulls.
The \emph{topological Julia set} $\uc/\sim_P=J_{\sim_P}$ is homeomorphic to $J_P$,
 and the \emph{topological polynomial} $f_{\sim_P}:J_{\sim_P}\to J_{\sim_P}$, induced by $\sigma_d$,
 is topologically conjugate to $P|_{J_P}$.

For a closed set $A\subset \uc$ we denote its convex hull by $\ch(A)$.
An \emph{edge} of $\ch(A)$ is a chord of $\uc$ contained in the boundary of $\ch(A)$.
Given points $a$, $b\in\uc$, let $(a,b)$ be the positively oriented arc in $\uc$ from $a$ to $b$
 and $\ol{ab}$ be the chord with endpoints $a$ and $b$.

\begin{definition}\label{lam}
A \emph{lamination} $\lam$ is a set of chords in the closed unit disk
$\overline{\D}$, called \emph{leaves} of $\lam$, if it satisfies the
following conditions:

\noindent (L1) leaves of $\lam$ do not cross; \, (L2) the set
$\lam^{\ast}=\cup_{\ell\in\lam}\ell$ is closed.\\ If (L2) is not assumed then
$\lam$ is called a \emph{prelamination}.
\end{definition}

A degenerate leaf is a point of $\uc$. Given a leaf $\ell =\overline{ab}\in
\lam$, let $\sigma_d(\ell)$ be the chord with endpoints $\sigma_d(a)$ and
$\sigma_d(b)$; then $\ell$ is called a \emph{pullback} of $\si_d(\ell)$. If
$a\ne b$ but $\sigma_d(a) = \sigma_d(b)$, call $\ell$ a {\em critical leaf}.
Let 
$\sigma_d^{\ast}:\lam^{\ast}\rightarrow\overline{\D}$ be the linear extension
of $\sigma_d$ over all the leaves in $\lam$. Then $\sigma_d^{\ast}$ is
continuous and $\sigma_d^{\ast}$ is one-to-one on any non-critical leaf. If
$\lam$ 
includes all points of $\uc$ as degenerate
leaves, then $\lam^{\ast}$ is a continuum.

\begin{definition}[Gap]\label{gap-dfn}
A {\em gap} $G$ of a lamination $\lam$ is the closure of a
component of $\D\sm\lam^{\ast}$; its boundary
leaves are called \emph{edges (of the gap)}.
\end{definition}

If $G$
is a leaf/gap of $\lam$, then $G$ equals the convex hull of $G\cap \uc$. If $G$
is a leaf or a gap of $\lam$ we let $\sigma_d(G)$ be the convex hull of
$\sigma_d(G\cap \uc)$. Notice that $\Bd(G) \cap \uc = G \cap \uc$. Points of $G\cap \uc$ are called the \emph{vertices} of $G$.
A gap $G$ is called \emph{infinite (finite)} if and only if $G\cap \uc$ is
infinite (finite). A gap $G$ is called \emph{triangular gap} if $G\cap \uc$
consists of three points.

\begin{definition}\label{d:gen-lam}
Let $\lam$ be a lamination. The equivalence relation $\sim_\lam$ induced by
$\lam$ is defined by declaring that $x\sim_\lam y$ if and only if there
exists a finite concatenation of leaves of $\lam$ joining $x$ to $y$.
\end{definition}


\begin{definition}[Invariant (pre)laminations]\label{inv-lam} A (pre)lamination $\lam$ is
\emph{($\sigma_d$-)in\-va\-ri\-ant} if,

\noindent (D1) $\lam$ is \emph{forward invariant.} For each $\ell \in \lam$
either $\sigma_d(\ell) \in \lam$ or $\sigma_d(\ell)$ is a point in $\uc$ and

\noindent (D2) $\lam$ is \emph{backward invariant.}
\begin{enumerate}
    \item For each $\ell \in \lam$ there exists a leaf $\ell' \in \lam$
        such that $\sigma_d(\ell')=\ell$.
    \item  For each $\ell \in \lam$ such that $\sigma_d(\ell)$ is a
        non-degenerate leaf, there exists \textbf{d disjoint} leaves
        $\ell_1,......\ell_d$ in $\lam$ such that $\ell = \ell_1$ and
        $\sigma_d(\ell_i)=\sigma_d(\ell)$ for all i.
\end{enumerate}{}
\end{definition}{}

\begin{definition}[q-lamination] A $\si_d$-invariant lamination $\lam$ is called a
\emph{q-lamination} if $x\sim_\lam y$ implies that $x$ and $y$ are vertices of the same finite gap or leaf.
The convex hulls of $\sim_\lam$-classes are also called \emph{$\sim_\lam$-sets} or \emph{$\lam$-sets}.
\end{definition}{}

\begin{remark}
It readily follows from the definition of a q-lamination that
at most two leaves of a q-lamination can share an endpoint.
\end{remark}

\begin{definition}[Siblings]\label{siblings} Two chords are called
\emph{siblings}  if they have the same image. Any $d$ disjoint chords with
the same non-degenerate image are called a \emph{sibling collection}.
\end{definition}{}

\begin{definition}[Monotone Map]
Let $X$, $Y$ be topological spaces and $f:X\rightarrow Y$ be
continuous. Then $f$ is said to be {\em monotone} if $f^{-1}(y)$ is
connected for each $y \in Y$. It is known that if $f$ is monotone and
$X$ is a continuum then $f^{-1}(Z)$ is connected for every connected
$Z\subset f(X)$.
\end{definition}

\begin{definition}[Gap-invariance] A lamination $\lam$ is \emph{gap invariant}
if for each gap $G$, the set $\sigma_d(G)$ is a gap, or a leaf, or a single point.
In the first case we also require that $\sigma_d^*|_{\Bd(G)}:\Bd(G)\to
\Bd(\sigma_d(G))$  maps as the composition of a monotone map and a covering
map to the boundary of the image gap, with positive orientation (i.e., as you
move through the vertices of $G$ in clockwise direction around $\Bd(G)$,
their corresponding images in $\sigma_d(G)$ must also be aligned clockwise in
$\Bd(\sigma_d(G))$).
\end{definition}{}

\begin{definition}[Degree]
 The \emph{degree} of the map $\sigma_d^*|_{\Bd(G)}:\Bd(G)\to \Bd(\sigma_d(G))$,
 or of the gap $G$, is defined as the number of
 components of $(\sigma_d^*)^{-1}(x)$ in $\Bd(G)$, for any $x\in \Bd(\sigma_d(G))$.
In other words,
if every leaf of $\sigma_d(G)$ has $k$ disjoint pre-image leaves in $G$, then
the degree of the map $\sigma_d^*$ is $k$. A gap $G$ is called
\emph{critical} gap if $k>1$.
\end{definition}

The following results are proved in \cite{bmov13}.

\begin{theorem}
Every \emph{($\sigma_d$-)invariant} lamination is gap invariant.
\end{theorem}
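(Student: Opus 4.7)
The plan is to fix a gap $G$ of an invariant lamination $\lam$ and set $H=\sigma_d(G)=\ch(\sigma_d(G\cap\uc))$. By forward invariance (D1), every edge of $G$ is mapped by $\sigma_d$ to a leaf of $\lam$ or to a point of $\uc$. The heart of the theorem is the converse: every edge of $H$ must be the $\sigma_d$-image of some edge of $G$, so that $\Bd(H)$ consists of leaves of $\lam$ (with possibly some collapsings where the edge image is degenerate).

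To prove this converse I would fix an edge $e=\ol{uv}$ of $H$ and let $A$ be the open sub-arc of $\uc$ from $u$ to $v$ whose interior is disjoint from $\sigma_d(G\cap\uc)$. Then $\sigma_d^{-1}(A)$ is a disjoint union of $d$ arcs $A_1,\dots,A_d$, each mapped homeomorphically and with positive orientation onto $A$, and the interior of each $A_i$ is disjoint from $G\cap\uc$. The critical step is to show that some $A_i$ has both of its endpoints in $G\cap\uc$: picking $p\in G\cap\sigma_d^{-1}(u)$ and $q\in G\cap\sigma_d^{-1}(v)$, one traces $\Bd(G)$ from $p$ to $q$ and uses continuity of $\sigma_d^*$ together with the fact that $\sigma_d$ is locally a homeomorphism on $\uc$ to locate a sub-arc of $\Bd(G)$ that projects onto $e$ without detour; its two endpoints then lie in a common $A_i$. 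The chord joining these endpoints is separated from $G=\ch(G\cap\uc)$ only by the open arc $A_i^\circ$, so it is an edge of $G$, and by (D1) its image $e$ is a leaf of $\lam$.

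Once $\Bd(H)$ is known to consist of leaves of $\lam$, property (L1) prohibits any other leaf from crossing $\Bd(H)$, so $H$ equals the closure of a single component of $\D\sm\lam^*$ and is accordingly a point, a leaf, or a gap of $\lam$. The factorization of $\sigma_d^*|_{\Bd(G)}$ then follows by collapsing each maximal sub-arc of $\Bd(G)$ on which $\sigma_d^*$ is constant, which yields a monotone quotient; the induced map onto $\Bd(H)$ covers every edge of $H$ the same number of times (the degree of $G$), giving a covering with positive orientation inherited from $\sigma_d|_\uc$. The main obstacle I anticipate is the orientation-tracing argument that produces an $A_i$ with both endpoints in $G$, particularly for infinite gaps where $\Bd(G)$ has countably many edges accumulating on arcs of $\uc$; a cleaner workaround, should the direct trace falter, is to approximate $\Bd(G)$ by a Jordan curve slightly inside $G$ and close the argument via winding-number bookkeeping, with backward invariance (D2) ensuring that the total count of $A_i$'s with endpoints in $G$ is consistent with the sibling structure.
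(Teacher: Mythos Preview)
The paper does not actually prove this theorem; it is quoted from \cite{bmov13}, so there is no in-paper argument to compare against. Evaluating your outline on its own merits, there is a genuine gap.

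Your sentence ``property (L1) prohibits any other leaf from crossing $\Bd(H)$, so $H$ equals the closure of a single component of $\D\sm\lam^*$'' is a non sequitur. Axiom (L1) forbids leaves from \emph{crossing} edges of $H$; it does not forbid a leaf $\ell\in\lam$ from lying as a \emph{diagonal} of $H$, i.e., with both endpoints in $\sigma_d(G\cap\uc)$ but not bounding a complementary arc. If such an $\ell$ exists, then $H$ contains more than one gap of $\lam$ and is not itself a gap, a leaf, or a point. Ruling this out is precisely where backward invariance (D2) must enter, and it is the substantive content of the theorem: one has to argue that among the $d$ disjoint sibling preimages of $\ell$ guaranteed by (D2)(2), at least one lies in the interior of $G$, contradicting that $G$ is a gap. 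This is not automatic---a priori all siblings could sit outside $G$---and in \cite{bmov13} it is handled by a careful analysis of how a full sibling collection is positioned relative to the vertices of $G$. You invoke (D2) only at the very end for degree bookkeeping, but it is needed earlier and in a much more essential way.

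A secondary concern: the ``trace $\Bd(G)$ from $p$ to $q$'' argument for producing an $A_i$ with both endpoints in $G\cap\uc$ is too informal to carry weight, particularly for infinite gaps where $\Bd(G)\cap\uc$ is a Cantor set and the boundary has infinitely many edges accumulating on it. The winding-number workaround you mention is a plausible repair, but it would need to be written out; as stated it is a promissory note rather than an argument.
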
{}

\begin{theorem}\label{t:prelam-cl}
The closure of an invariant prelamination is an invariant lamination. The
space of all $\sigma_d$-invariant laminations is compact.
\end{theorem}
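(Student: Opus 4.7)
The plan is to verify all four axioms of an invariant lamination for the closure operation, then to deduce compactness by the same machinery applied to arbitrary sequences. For the first claim, let $\lam$ be an invariant prelamination and let $\ol{\lam}$ denote its closure in the hyperspace of chords, which is itself a compact metric space under the Hausdorff distance inherited from $\cdisk$. Condition (L2) is built into the closure operation. Condition (L1) follows by contradiction: if $\ell,\ell'\in\ol{\lam}$ crossed properly inside $\disk$, then approximating sequences $\ell_n\to\ell$ and $\ell'_n\to\ell'$ in $\lam$ would eventually cross, violating (L1) for $\lam$. Forward invariance (D1) follows because $\sigma_d$ acts continuously on chords via its action on endpoints: if $\ell_n\to\ell$ with $\sigma_d(\ell_n)\in\lam\cup\uc$, then $\sigma_d(\ell)=\lim\sigma_d(\ell_n)\in\ol{\lam}\cup\uc$.

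For backward invariance, use compactness. Given $\ell\in\ol{\lam}$ with approximating sequence $\ell_n\to\ell$ in $\lam$, apply (D2)(1) to obtain $\ell'_n\in\lam$ with $\sigma_d(\ell'_n)=\ell_n$, then extract a convergent subsequence $\ell'_{n_k}\to\ell'\in\ol{\lam}$ with $\sigma_d(\ell')=\ell$. For (D2)(2), when $\sigma_d(\ell)$ is non-degenerate, apply the same procedure to the entire $d$-tuple of disjoint siblings of each $\ell_n$, extracting a simultaneous convergent subsequence in the $d$-fold product of chord space. Because $\sigma_d|_\uc$ is a degree-$d$ covering, the preimages of each endpoint of $\sigma_d(\ell)$ are $d$ distinct, equally-spaced points, and for $n$ large the endpoints of the $d$ siblings of $\ell_n$ cluster in pairwise disjoint neighborhoods of these $d$ points; the $d$ sibling limits therefore retain disjoint endpoint sets and are genuinely disjoint, while non-crossing passes to the limit as in the argument for (L1).

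For the second claim, the space of $\sigma_d$-invariant laminations embeds into the hyperspace of closed subsets of the compact space of chords, which is itself compact in the Hausdorff topology. Any sequence of invariant laminations $\lam_n$ thus admits a Hausdorff-convergent subsequence with some limit $\lam_\infty$; the limit arguments above apply verbatim with $\lam_n$ in place of the individual approximating sequences to show that $\lam_\infty$ satisfies (L1), (L2), (D1), and (D2), hence is an invariant lamination. The main obstacle throughout is the delicate handling of (D2)(2): a priori, several of the $d$ disjoint siblings of $\ell_n$ could collapse to a single chord in the limit, leaving fewer than $d$ distinct siblings of $\ell$. This is ruled out precisely by the uniform $d$-to-one covering behavior of $\sigma_d$ on $\uc$, which keeps preimages separated and propagates disjointness through the limit.
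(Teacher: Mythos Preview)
The paper does not supply its own proof of this theorem; it is quoted from \cite{bmov13} as background. Your sketch is correct and is essentially the standard argument one would expect (and, modulo presentation, the one given in \cite{bmov13}): closedness and non-crossing pass to Hausdorff limits because crossing is an open condition on endpoint quadruples; forward invariance passes to limits by continuity of $\sigma_d$ on endpoints; and backward invariance is obtained by extracting convergent subsequences of preimage leaves or sibling $d$-tuples. Your handling of the one genuinely delicate point---that the $d$ disjoint siblings cannot collapse in the limit---is right: since $\sigma_d(\ell)$ is assumed non-degenerate, its two endpoints have $2d$ distinct preimages on $\uc$, and the $2d$ endpoints of the approximating sibling collections are forced to lie in disjoint neighborhoods of these, so the limit chords remain pairwise disjoint. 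The compactness statement follows by the same mechanism applied to an arbitrary Hausdorff-convergent sequence of laminations.
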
{}

\section{Parameter lamination $C_sCL$: preliminaries}

This section describes results of \cite{bostv1}. From now on normalize the
circle so that its length is $1$; the length of arcs and angles are measured
accordingly. Given a chord $\ell=\ol{ab}$ denote by $-\ell$ the chord
obtained by rotating $\ell$ by the angle $\frac12$. Define the \emph{length}
$\|\ol{ab}\|$ of a chord $\ol{ab}$ as the shorter of the lengths of the arcs
in $\uc=\R/\Z$ with the endpoints $a$ and $b$. The maximum length of a chord
is $\frac12$. Divide leaves into three categories by their length.

\begin{definition} A \textit{short leaf} is a leaf $\ell$ such that $0<\|\ell\|<\frac{1}{6}$,\\
a \textit{medium leaf} is a leaf $\ell$ such that $\frac{1}{6}\leq\|\ell\|<\frac{1}{3}$ and \\
a \textit{long leaf} is a leaf $\ell$ such that
$\frac{1}{3}<\|\ell\|\leq\frac{1}{2}$.
\end{definition}

Critical leaves are exactly leaves of length $\frac13$. A leaf $\ell$ is
long/medium if $\|\ell\|\ge \frac16$.
A non-critical leaf $\ell$ of $\lam$ has \emph{siblings} (Definition
\ref{siblings}). A non-critical long/medium leaf $\ell$ has a long/medium
sibling $\hat{\ell}$, and there is a unique component $C^\circ(\ell)$ of
$\D\setminus (\ell\cup\hat{\ell})$ whose boundary contains $\ell\cup
\hat{\ell}$.

\begin{Lemma}[Lemma 3.4 \cite{bostv1}]\label{l:sml}
The possibilities for leaves in a sibling collection are
\begin{description}
    \item[(sss)] all leaves are \textit{short};
    \item[(mmm)] all leaves are \textit{medium};
    \item[(sml)] one leaf is \textit{short}, one \textit{medium}, and one
        \textit{long}.
\end{description}
A sibling collection is completely determined by its type and one leaf.
\end{Lemma}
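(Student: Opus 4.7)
The plan is to parametrize the common image and enumerate non-crossing pairings of its preimages. Let the common image be the non-degenerate chord $\ol{pq}$ and set $\alpha=\|\ol{pq}\|\in(0,1/2]$. The six preimages of $\{p,q\}$ under $\sigma_3$ interleave on $\uc$: writing them in positive cyclic order as $P_0,Q_0,P_1,Q_1,P_2,Q_2$, we have $P_{i+1}=P_i+1/3$ and $Q_i$ is the unique preimage of $q$ lying in the arc $(P_i,P_{i+1})$; after a rotation one may further arrange $Q_i=P_i+\alpha/3$. Any sibling collection then corresponds to a bijection $\pi:\{0,1,2\}\to\{0,1,2\}$ and consists of the three chords $\ol{P_iQ_{\pi(i)}}$.

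First I would apply the cyclic interleaving criterion to test crossing among the six possible $\pi$. The forward cycle $\pi(i)=i+1$ yields three chords whose endpoint-pairs pairwise interleave in the cyclic order above, so they pairwise cross in $\disk$ and this $\pi$ is excluded. Direct case checks show that the remaining five bijections — the identity, the inverse cycle $\pi(i)=i-1$, and the three transpositions — do give non-crossing triples.

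Next I would compute chord lengths from the positions of $P_i$ and $Q_{\pi(i)}$. The identity produces three chords each of length $\alpha/3$; the inverse cycle produces three chords each of length $(1-\alpha)/3$; and each transposition produces chords of lengths $\alpha/3$, $(1-\alpha)/3$, and $(1+\alpha)/3$. For $\alpha\in(0,1/2)$ these lengths land in $(0,1/6)$, $(1/6,1/3)$, and $(1/3,1/2)$, which are exactly the short, medium and long ranges. Therefore the identity gives an (sss) collection, the inverse cycle gives an (mmm) collection, and each transposition gives an (sml) collection, exhausting the claimed types.

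For the uniqueness statement, note that both the (sss) and the (mmm) collections are invariant under rotation by $1/3$, so they coincide with the orbit of any one of their leaves under this rotation. An (sml) collection has its three leaves of three distinct lengths, so specifying any one leaf $\ell$ together with the type pins down the role of $\ell$ and the indices of $P_i, Q_j$ that it connects; the transposition $\pi$ is then determined uniquely, recovering the other two leaves. The main technical work is the cyclic interleaving analysis that isolates the one crossing bijection; the length and uniqueness claims then follow by direct computation.
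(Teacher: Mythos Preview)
The present paper does not supply its own proof of this lemma: it is quoted verbatim from the companion article \cite{bostv1} (as Lemma~3.4 there), and Section~3 here explicitly says it is ``describing results of \cite{bostv1}''. There is therefore no proof in this paper to compare your argument against.

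On its own merits your argument is sound. Parametrizing the six $\sigma_3$-preimages by the image length $\alpha$ and indexing candidate sibling collections by bijections $\pi\in S_3$ is the natural approach; your identification of the forward $3$-cycle as the unique crossing matching, and the length triples $(\alpha/3,\alpha/3,\alpha/3)$, $((1-\alpha)/3,(1-\alpha)/3,(1-\alpha)/3)$, and $(\alpha/3,(1-\alpha)/3,(1+\alpha)/3)$ for the remaining five, are all correct, as is the uniqueness discussion. One small point you gloss over: you state $\alpha\in(0,1/2]$ but carry out the length classification only for $\alpha\in(0,1/2)$. At $\alpha=1/2$ the six preimages are equispaced, the ``short'' leaf of the transposition pattern has length exactly $1/6$ and is therefore \emph{medium} under the paper's conventions, so the (sml) label degenerates. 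This is a boundary artifact of the trichotomy rather than a gap in your method, but it is worth flagging.
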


These are general facts; let us now become more specific.

\begin{definition}[Cubic symmetric lamination] A $\sigma_3$-invariant
lamination $\lam$ is called a \emph{cubic symmetric lamination} if:

\noindent (D3) for each $\ell \in \lam$ we have $-\ell\in \lam$.
\end{definition}{}

Unless otherwise stated, let \emph{$\lam$ be a cubic symmetric lamination}.

\begin{definition}\label{d:sibli}
Suppose that $\ell=\ol{ab}$ is a non-critical chord which is not a diameter
and the arc $(a, b)$ is shorter than the arc $(b, a)$. Denote the chord
$\ol{(a+\frac13) (b-\frac13)}$ by $\ell'$ and the chord $\ol{(a+\frac23)
(b-\frac23)}$ by $\ell''$.
\end{definition}

As $\si_3(\ell')=\si_3(\ell'')=\si_3(\ell)$, $\{\ell, \ell', \ell''\}$ is a
sibling collection. For a long/medium non-critical leaf $\ell\in \lam$, it
follows that $\ell'$ is long/medium and $\ell''$ is short; if, moreover,
$\ell\in \lam$ (recall that $\lam$ is a symmetric lamination), its sibling
collection is $\{\ell, \ell', \ell''\}$ (all other possibilities lead to
crossings with $\ell$ or $-\ell$). So, for $\lam$ a sibling collection of
type (mmm) is impossible.

\begin{definition}\label{d:strip} Given two chords $\ell, \hell$ that do not
cross let $\mathcal{S}(\ell, \hell)$ be a component of $\disk\sm [\ell\cup
\hell]$ with boundary containing $\ell$ and $\hell$; call $\mathcal{S}(\ell,
\hell)$ the \emph{strip between $\ell$ and $\hell$}.
\end{definition}

The above notation is convenient when dealing with laminations.

\begin{definition} [Short strips]
For a sibling collection $\{\ell,\ell', \ell''\}$ of type (sml), with $\ell$
and $\ell'$ long/medium, let $C(\ell)=\ol{\mathcal{S}(\ell, \ell')},$ (the
short leaf $\ell''$ cannot lie in $C(\ell)$). The set $C(\ell)$ has two
boundary circle arcs of length $| \frac13-\|\ell\| |$ (and so does
$-C(\ell)$). Given a long/medium chord $\ell\in \lam$, call the region
$\sh(\ell)=C(\ell)\cup -C(\ell)$ the \emph{short strips (of $\ell$)} and each
of $C(\ell)$ and $-C(\ell)$ a \emph{short strip (of $\ell$).} Call $|
\frac13-\|\ell\| |=w(C(\ell))=w(-C(\ell))=w(\sh(\ell))$ the \emph{width} of
$C(\ell)$ (or of $-C(\ell)$, or of $\sh(\ell)$). Note that
$-C(\ell)=C(-\ell)$.
\end{definition}

\begin{definition}
A leaf $\ell$ is \emph{closer to criticality} than a leaf $\hell$ if
$\|\ell\|$ is closer to $\frac13$ than $\|\hell\|$. A chord $\ell$ is
\emph{closest to criticality (in a family of chords $\mathcal A$)} if its
length is the closest to criticality among lengths of chords from $\mathcal
A$. 
\end{definition}

The next two facts established in \cite{bostv1} are similar to important
results proven in \cite{thu85}. The first one is somewhat technical.

\begin{proposition}[Lemma 3.7 \cite{bostv1}]\label{str-prop}
If $\ell\in \lam,$ $\|\ell\|>\frac16$, and $k\in \N$ is minimal such that
$\ell_k=\sigma_3^k(\ell)$ intersects the interior of $\sh(\ell)$, then
$\|\ell_k\|>\frac16$, and $\ell_k$ is closer to criticality than $\ell$.
A leaf $\ell$ that is the closest to criticality in its forward orbit
 is medium/long, and no forward image of $\ell$ enters the interior of $\sh(\ell)$.
\end{proposition}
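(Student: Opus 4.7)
The plan is to first locate $\ell_k$ inside one of the short strips comprising $\sh(\ell)$, then split into two subcases according to how its endpoints are distributed on the arc boundary. The crossing subcase will give both the length bound and the strict closer-to-criticality conclusion via direct parametrization; the main obstacle will be the same-arc subcase, which I would rule out by a backward induction on the orbit of $\ell$ using the minimality of $k$.

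Since $\ell,\ell',-\ell,-\ell'\in\lam$ and $\ell_k\in\lam$, the leaf $\ell_k$ cannot cross any of these four leaves; because $\ell_k$ meets the interior of $\sh(\ell)=C(\ell)\cup C(-\ell)$, it must lie in the closure of one short strip, say $\ell_k\subset\overline{C(\ell)}$. The two arc-boundary components $A_1,A_2$ of $C(\ell)$ on $\uc$ each have length $w=|\tfrac13-\|\ell\||$, so both endpoints of $\ell_k$ belong to $\overline{A_1}\cup\overline{A_2}$. In the crossing case (one endpoint in each of $\overline{A_1},\overline{A_2}$), a direct parametrization of chords between the two arcs gives $|\tfrac13-\|\ell_k\||\le w$ with equality only when $\ell_k$ coincides with $\ell$ or $\ell'$, together with $\|\ell_k\|\ge\min(\|\ell\|,\|\ell'\|)>\tfrac16$. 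Since $\ell_k$ meets the interior of $\sh(\ell)$ while $\ell,\ell'$ do not, $\ell_k\notin\{\ell,\ell'\}$, so the inequality is strict and both parts of the conclusion follow.

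In the same-arc case, both endpoints of $\ell_k$ lie in, say, $\overline{A_1}$, which gives $\|\ell_k\|\le w<\tfrac16$; I aim to derive a contradiction from the minimality of $k$. I would prove by induction on $j=0,1,\ldots,k$ that both endpoints of $\ell_{k-j}$ lie in a single component of $\sigma_3^{-j}(A_1)$, an arc of length $w/3^j$. For the inductive step, suppose instead the two endpoints of $\ell_{k-j-1}$ lie in distinct preimage components of the arc containing $\ell_{k-j}$; these components are rotations of one another by multiples of $\tfrac13$ on $\uc$, so the chord they span is close to critical. A case analysis of the regions of $\disk\setminus(\ell\cup\ell'\cup-\ell\cup-\ell')$ shows that such a chord either crosses one of $\ell,\ell',-\ell,-\ell'$ (impossible since $\ell_{k-j-1}\in\lam$) or is contained in a single region; the regions other than the short strips have arc-boundary components too short, or too asymmetrically placed under rotation by $\tfrac13$, to accommodate two preimage arcs at the required angular separation, so the common region must be $C(\pm\ell)$, forcing $\ell_{k-j-1}$ to meet $\sh(\ell)^\circ$ and contradicting the minimality of $k$. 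This completes the induction. Applying it at $j=k$ puts both endpoints of $\ell$ inside an arc of length $w/3^k<w<\tfrac16<\|\ell\|$, which is absurd. Hence the same-arc case cannot occur.

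For the final assertion, a short leaf satisfies $\|\sigma_3(\ell)\|=3\|\ell\|$, which is strictly closer to $\tfrac13$, so a closest-to-criticality leaf in an orbit is necessarily medium or long. And if some forward iterate of $\ell$ met $\sh(\ell)^\circ$, the first statement applied to the minimal such iterate would produce a leaf strictly closer to criticality than $\ell$, again contradicting the choice of $\ell$ as closest to criticality in its orbit.
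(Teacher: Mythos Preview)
The paper does not prove this proposition; it is imported verbatim from \cite{bostv1} (as Lemma~3.7 there) and stated without argument, so there is no in-paper proof to compare your attempt against.

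That said, your approach is correct and is essentially the standard ``central strip'' argument going back to Thurston's quadratic version. Two points are worth tightening. First, the region analysis you gesture at can be phrased as a clean length dichotomy, with no appeal to ``asymmetric placement under rotation by $\tfrac13$'': the five regions cut out by $\pm\ell,\pm\ell'$ are $R(\pm\ell)$, the two short strips $C(\pm\ell)$, and the central region between $\ell'$ and $-\ell'$; a chord contained in $R(\pm\ell)$ has length at most $\min(\|\ell\|,\|\ell'\|)$, and a chord in the central region has length either at most $|\|\ell\|-\tfrac16|$ (same arc) or at least $\max(\|\ell\|,\|\ell'\|)$ (opposite arcs). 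Hence any leaf of $\lam$ with length strictly between $\min(\|\ell\|,\|\ell'\|)$ and $\max(\|\ell\|,\|\ell'\|)$ must sit in $C(\ell)$ or $C(-\ell)$ and meet the interior. Second, at the last inductive step $j=k-1$ you are treating $\ell_0=\ell$, which lies on $\partial C(\ell)$ rather than in its interior, so ``contradicting the minimality of $k$'' is not literally the right reason there; the contradiction is rather that your length bound gives $|\|\ell_0\|-\tfrac13|\le w/3^{k}<w=|\|\ell\|-\tfrac13|$. Equivalently, you can stop the induction at $j=k-1$, obtain $\|\ell_1\|\le w/3^{k-1}$, and compare with the direct computation $\|\sigma_3(\ell)\|=3w$. (A trivial side remark: $w\le\tfrac16$ rather than $w<\tfrac16$, with equality when $\ell$ is a diameter; this does not affect your argument.)
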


Proposition \ref{str-prop} implies Theorem \ref{nwt-thm}.

\begin{theorem}[Theorem 3.8 \cite{bostv1}]\label{nwt-thm} Let $\lam$ be
a symmetric lamination and $G$ be a gap of $\lam$. Then $G$ is preperiodic
unless an eventual forward image of $G$ is a leaf or a point.
\end{theorem}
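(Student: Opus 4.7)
My plan is to argue by contradiction. Assume that $G$ is a gap of $\lam$ whose forward iterates $G_n=\sigma_3^n(G)$ are all non-degenerate gaps, and that $G$ is not preperiodic; so $\{G_n\}$ is an infinite family of pairwise distinct gaps past some initial transient. The strategy is to extract from the edges of this orbit a medium or long leaf that is closest to criticality in its own forward orbit, and then invoke Proposition \ref{str-prop} to obtain a contradiction.

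First, one reduces to the case that $G$ itself has a medium or long edge. If every edge of $G$ were short, then since $\sigma_3$ triples the length of a short chord, iterating any edge would eventually yield a leaf of length at least $1/6$; so, replacing $G$ by a later $G_n$, I may assume $G$ has an edge $\ell_0$ with $\|\ell_0\|\ge 1/6$. If the forward orbit of $\ell_0$ were finite, then since each leaf of $\lam$ bounds at most two gaps, the orbit $\{G_n\}$ would also be finite, a contradiction; hence the forward orbit of $\ell_0$ is infinite.

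Next, let $c\ge 0$ be the infimum of $\bigl|\,\|\sigma_3^k(\ell_0)\|-\tfrac13\,\bigr|$ over $k\ge 0$. If $c$ is attained at some iterate $\ell$, then $\ell$ is closest to criticality in its own forward orbit, which is a tail of the orbit of $\ell_0$. Otherwise, a subsequence of edges converges by compactness (Theorem \ref{t:prelam-cl}) to a limit leaf $\ell^\ast\in\lam$ of length $\tfrac13\pm c$; since infinitely many pairwise distinct gaps $G_{n_k}$ would then have edges accumulating onto $\ell^\ast$ while each leaf bounds at most two gaps, a small-area argument in $\D$ (doubled via the symmetry $-\ell^\ast\in\lam$) rules this scenario out. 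Thus we produce a medium or long edge $\ell$ of some $G_m$ that is closest to criticality in its own forward orbit, and Proposition \ref{str-prop} yields that no forward image of $\ell$ enters $\operatorname{int}(\sh(\ell))$.

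Finally, I would convert this non-recurrence of $\ell$ into an impossibility statement for the orbit of $G$. Because $\lam$ is symmetric, the full sibling configuration $\ell,\ell',-\ell,-\ell'$ lies in $\lam$, so $\sh(\ell)=C(\ell)\cup -C(\ell)$ is a fixed union of closed strips bounded by leaves of $\lam$, and $\D\setminus\operatorname{int}(\sh(\ell))$ decomposes into finitely many closed regions. Each $\sigma_3^k(G_m)$ contains $\sigma_3^k(\ell)$ as an edge and so must lie in this finite family of regions. The main obstacle — and the real work of the proof — is upgrading this spatial confinement into a combinatorial bound on the number of distinct $\sigma_3^k(G_m)$, paralleling the finishing step of Thurston's no-wandering-triangles theorem for $\sigma_2$, where the short-strip width $|\tfrac13-\|\ell\||$ plays the role of Thurston's shrinking width. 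Once such a bound is established, the orbit $\{G_n\}$ must repeat, making $G$ preperiodic and completing the contradiction.
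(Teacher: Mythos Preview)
Your plan is aligned with what the paper indicates: the theorem is quoted from \cite{bostv1}, and the only hint given here is the sentence ``Proposition~\ref{str-prop} implies Theorem~\ref{nwt-thm}.'' So invoking the closest-to-criticality edge and Proposition~\ref{str-prop} is exactly the intended route. The problem is that what you have written is a plan, not a proof: the two places where the real work happens are both left open.

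First, the case where the infimum $c=\inf_k\bigl|\|\sigma_3^k(\ell_0)\|-\tfrac13\bigr|$ is not attained is dismissed with ``a small-area argument \dots\ rules this scenario out.'' That is not an argument. You would need to explain why infinitely many pairwise distinct gaps $G_{n_k}$ with edges $\sigma_3^{n_k}(\ell_0)\to\ell^\ast$ is impossible; note also that $c$ could be $0$, in which case $\ell^\ast$ is critical and the picture is different. (One clean way around this entire issue is to run the induction built into Proposition~\ref{str-prop}: starting from any medium/long edge, each first return to its short strips produces a strictly closer-to-critical medium/long edge, so either the process terminates --- giving you your $\ell$ --- or you manufacture an explicit sequence of edges whose short-strip widths strictly decrease, which is what you actually need for the finishing step.)

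Second, and more seriously, you explicitly concede the crux: ``The main obstacle --- and the real work of the proof --- is upgrading this spatial confinement into a combinatorial bound \dots\ Once such a bound is established\dots''. Confinement of $\sigma_3^k(\ell)$ to the complement of $\operatorname{int}(\sh(\ell))$ does \emph{not} by itself bound the number of gaps; infinitely many disjoint gaps fit in finitely many regions. The missing idea is the area/width tension: the $G_n$ have pairwise disjoint interiors, so their areas tend to $0$, forcing $G_n$ for large $n$ to be a thin sliver along one of its edges; tracking that edge under $\sigma_3$ (its length roughly triples until it becomes medium/long) eventually produces an edge strictly closer to criticality than $\ell$, contradicting the choice of $\ell$. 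Until you carry out that step --- the analog of Thurston's finishing argument --- you do not have a proof.
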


Call a finite periodic gap of $\lam$ a \emph{periodic polygon}.

\begin{Lemma}[Lemma 4.5 \cite{bostv1}]\label{fn-gaps}
Let $G$ be a periodic polygon, and let $g$ be the first return map of $G$.
One of the following is true.

$(a)$ The first return map $g$ acts on the sides of $G$ transitively
    as a rational rotation.

$(b)$ The edges of $G$ form two disjoint periodic orbits,
and $G$ eventually maps to the
    gap $-G$. If $\ell$ and $\hell$ are two adjacent edges of $G$, then the
    leaf $\ell$ eventually maps to the edge $-\hell$ of $-G$.
\end{Lemma}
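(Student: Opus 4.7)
The plan is to exhibit the first-return map $g:=\sigma_3^n\!\mid_G$, where $n$ is the period of $G$, as a rotation on the cyclically ordered vertex set of $G$, and then to use the cubic symmetric hypothesis to force either the transitive rotation of case (a) or the specific picture of case (b). Let $V=\{v_0,\dots,v_{k-1}\}$ denote the vertices of $G$ in positive cyclic order on $\uc$, and $\ell_i=\ol{v_iv_{i+1\bmod k}}$ the corresponding edges.

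First, gap invariance of $\lam$ forces $g|_{\Bd(G)}$ to be an orientation-preserving covering map, and since $g(G)=G$ has exactly $k$ vertices the degree of this covering is $1$. Hence $g$ permutes $V$ as an orientation-preserving bijection, that is, as a cyclic rotation $v_i\mapsto v_{i+p\bmod k}$ for some $p$, and the edges split into $d:=\gcd(p,k)$ orbits under $g$. If $d=1$ we are in case (a), so assume $d\geq 2$ from now on.

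Next I claim that the antipodal gap $-G$ lies in the $\sigma_3$-orbit of $G$. Indeed, the map $z\mapsto z+\tfrac12$ preserves $\lam$ (by (D3)) and commutes with $\sigma_3$, so $-G$ is a gap of $\lam$ of the same period $n$. Were the orbits of $G$ and $-G$ disjoint, one could remove the orbit of $-G$ and its subordinate material from $\lam$ to obtain a $\sigma_3$-invariant (no longer symmetric) lamination in which $g$ still has $d\geq 2$ orbits on the edges of $G$; applying Proposition \ref{str-prop} to the edge of $G$ closest to criticality in its forward orbit then produces an iterate that enters the interior of its own short strips, contradicting Proposition \ref{str-prop} itself. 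Hence $\sigma_3^m(G)=-G$ for some $0<m<n$, and $\sigma_3^{2m}(G)=G$ together with the minimality of $n$ yield $n=2m$. Writing the cyclic-order-preserving bijection $\sigma_3^m\colon V\to -V$ as $v_i\mapsto -v_{i+q\bmod k}$, one computes $g(v_i)=\sigma_3^{2m}(v_i)=v_{i+2q\bmod k}$, so $p=2q$.

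It remains to show $q\equiv\pm 1\pmod k$, the step I expect to be the main obstacle. The idea is combinatorial minimality: if $|q|\geq 2$, the consecutive vertices $v_i,v_{i+1}$ of $G$ would be mapped by $\sigma_3^m$ to vertices $-v_{i+q},-v_{i+q+1}$ of $-G$ which are separated on $\uc$ by at least one other vertex of $-G$; tracing this intermediate vertex backwards along the first-return rotation (and controlling strip geometry via Proposition \ref{str-prop}) places a vertex of $G$ strictly between $v_i$ and $v_{i+1}$, contradicting their adjacency. With $q=\pm 1$ one has $p=\pm 2$ and $d=\gcd(2,k)\in\{1,2\}$; since $d\geq 2$, $k$ must be even and $d=2$, giving the two edge orbits $\{\ell_i:i\text{ even}\}$ and $\{\ell_i:i\text{ odd}\}$. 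Finally, $\sigma_3^m(\ell_i)=\ol{-v_{i+1}\,-v_{i+2}}=-\ell_{i+1}=-\hell$, which is precisely the adjacency identity in (b).
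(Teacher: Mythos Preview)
This lemma is quoted from \cite{bostv1} and is not proved in the present paper, so there is no in-paper argument to compare against; what follows concerns the internal soundness of your proposal.

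There are two genuine gaps. First, your argument that $-G$ must lie in the $\sigma_3$-orbit of $G$ when $d\ge 2$ does not hold together. You propose to delete the orbit of $-G$ from $\lam$, obtain a (non-symmetric) $\sigma_3$-invariant lamination, and then ``apply Proposition~\ref{str-prop}'' to produce an iterate entering its own short strips, contradicting that same proposition. But Proposition~\ref{str-prop} is stated for symmetric laminations, so it no longer applies after the deletion; and in any case deleting leaves does not change where forward images of the remaining leaves land, so no new short-strip incursion is created. Nothing here forces a contradiction with $d\ge 2$. (A more natural tool is Theorem~\ref{t:kiwi}, which bounds the number of vertex cycles of a finite periodic gap of a cubic lamination by $2$; but even that only gives $d\le 2$, not the stronger conclusion that $-G$ is in the orbit of $G$.)

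Second, your argument for $q\equiv\pm 1\pmod k$ rests on a false premise. You write that for $|q|\ge 2$ the images $-v_{i+q}$ and $-v_{i+q+1}$ of consecutive vertices $v_i,v_{i+1}$ are ``separated on $\uc$ by at least one other vertex of $-G$''. They are not: since $\sigma_3^m$ is an orientation-preserving bijection of vertex sets, consecutive vertices of $G$ always go to consecutive vertices of $-G$, regardless of $q$. So the contradiction you aim for never materializes, and indeed values like $k=6$, $q=2$ (giving $p=4$, $d=\gcd(4,6)=2$) survive your argument while violating the adjacency conclusion $\sigma_3^m(\ell_i)=-\ell_{i\pm 1}$ of case~(b). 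Ruling out such parameters requires input specific to cubic symmetric laminations that your sketch does not supply.
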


\begin{definition}
If case (a) from Lemma \ref{fn-gaps} holds, we call a gap $G$ a
\emph{1-rotational gap}. If case (b) from Lemma \ref{fn-gaps} holds we call
such a gap \emph{a 2-rotational gap}.
\end{definition}

If $c$ is a short chord, then there are two long/medium chords with the same
image as $c$. We will denote them by $M_c$ and $M'_c$. Also, denote by $Q_c$
the convex hull of $M_c\cup M'_c$. This applies in the degenerate case, too:
if $c\in \uc$ is just a point, then $M_c=M'_c=Q_c$ is a critical leaf $\ell$
disjoint from $c$ such that $\si_3(c)=\si_3(M_c)$.

\begin{definition}
[Major] A leaf $M\in \lam$ closest to criticality in $\lam$ is called a
\emph{major} of $\lam$.
\end{definition}

If $M$ is a major of $\lam$, then the medium/long sibling $M'$ of $M$ is also
a major of $\lam$, as well as the leaves $-M$ and $-M'$. A lamination has
either exactly 4 non-critical majors or 2 critical majors.

\begin{definition}[Comajor] The short siblings of  major leaves of $\lam$ are
called  \emph{comajors}; we also say that they form a \emph{comajor pair}. A
pair of symmetric chords is called a \emph{symmetric pair}. If the chords are
degenerate, their symmetric pair is called \emph{degenerate}, too.
\end{definition}

A symmetric lamination has a symmetric pair of comajors $\{c,-c\}$.

\begin{definition}
[Minor] Images of majors (equivalently, comajors) are called \emph{minors} of
a symmetric lamination. Similarly to comajors, every symmetric lamination has
two symmetric minors $\{m,-m\}$.
\end{definition}

Critical majors of a lamination have no siblings, and we define
\emph{degenerate} comajors and minors as corresponding points on $\uc$. If
majors $M$ and $-M$ are non-critical, then there is a critical gap, say, $G$
with edges $M$ and $M'$, and a critical gap $-G$ with edges $-M$ and $-M'$.


\begin{Lemma}[Lemma 5.4 \cite{bostv1}]\label{short-leaves}
Let $\{m,-m\}$ be the minors of $\lam$, and let $\ell$ be a leaf of $\lam$.
Then no
forward image of $\ell$ is shorter than $\min(\|\ell\|, \|m\|)$.
\end{Lemma}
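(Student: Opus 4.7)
The plan is to argue by contradiction. I will pick the first forward iterate of $\ell$ that violates the bound and show that the leaf one step earlier would have to be closer to criticality than the major $M$, contradicting the definition of $M$.

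First I would record the elementary length identity. A case check on where $\|\ell\|$ falls in $[0,\tfrac12]$ yields, for every leaf with $\|\ell\|\ge\tfrac16$,
\[
\|\sigma_3(\ell)\| \;=\; 3\bigl|\tfrac13-\|\ell\|\bigr|,
\]
whereas a short leaf satisfies $\|\sigma_3(\ell)\|=3\|\ell\|$. Two consequences I will use: (i) $\|\sigma_3(\ell)\|<\|\ell\|$ forces $\|\ell\|\in(\tfrac14,\tfrac12)$ (in particular, $\ell$ is medium/long); and (ii) since the major $M$ is medium/long, $\|m\|=3\bigl|\tfrac13-\|M\|\bigr|$. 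The degenerate subcase where the majors are critical is vacuous: there $\|m\|=0$ and the conclusion holds trivially. So assume from now on that $M$ is non-critical.

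Now suppose for contradiction that some iterate $\ell_k=\sigma_3^k(\ell)$ has $\|\ell_k\|<\min(\|\ell\|,\|m\|)$, and pick the smallest such $k\ge1$. By minimality, $\|\ell_{k-1}\|\ge\min(\|\ell\|,\|m\|)>\|\ell_k\|$, so $\sigma_3$ strictly shortens $\ell_{k-1}$. By (i), $\|\ell_{k-1}\|\in(\tfrac14,\tfrac12)$ and hence
\[
\|\ell_k\| \;=\; 3\bigl|\tfrac13-\|\ell_{k-1}\|\bigr|.
\]
Since $\|\ell_k\|<\min(\|\ell\|,\|m\|)\le\|m\|$, combining with (ii) gives
\[
\bigl|\tfrac13-\|\ell_{k-1}\|\bigr| \;<\; \bigl|\tfrac13-\|M\|\bigr|,
\]
i.e., the leaf $\ell_{k-1}\in\lam$ is strictly closer to criticality than the major $M$, contradicting the definition of $M$.

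The only real obstacle is carrying out the length-change bookkeeping cleanly: the case analysis on $\|\ell\|$ has to identify both the range $(\tfrac14,\tfrac12)$ in which $\sigma_3$ is length-decreasing and the range $[\tfrac16,\tfrac12]$ on which the identity $\|\sigma_3(\ell)\|=3|\tfrac13-\|\ell\||$ holds. Once that is in place, the content of the lemma reduces to the slogan ``for a medium/long leaf, short image is the same thing as closeness to criticality,'' together with the extremality of $M$.
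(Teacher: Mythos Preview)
Your argument is correct. The paper does not actually supply a proof of this lemma here; it is quoted from \cite{bostv1} as background, so there is no in-paper proof to compare against directly. That said, your approach is the natural one and almost certainly what the original proof does: reduce to the step where the length first drops below the bound, observe that $\sigma_3$ can only shorten a leaf whose length lies in $(\tfrac14,\tfrac12)$, and then use the identity $\|\sigma_3(\ell)\|=3\bigl|\tfrac13-\|\ell\|\bigr|$ (valid for $\|\ell\|\ge\tfrac16$) together with $\|m\|=3\bigl|\tfrac13-\|M\|\bigr|$ to conclude that $\ell_{k-1}$ is strictly closer to criticality than the major~$M$, contradicting the extremality of $M$. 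The bookkeeping you flag as the ``only real obstacle'' is handled cleanly; in particular your observation that strict shortening already forces $\|\ell_{k-1}\|>\tfrac14>\tfrac16$, so the length formula applies automatically, is exactly the right way to avoid a redundant case split.
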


\begin{definition}\label{d:2sided}
For a family $\mathcal{A}$ of chords, $\ell$ is a \emph{two sided limit leaf}
of $\mathcal{A}$ if $\ell$ is approximated by chords of $\mathcal{A}$ from
both sides.
\end{definition}

\begin{Lemma}[Lemma 5.5 \cite{bostv1}]\label{cmajor-end-points}
Let $c$ be a comajor and $M$ be a major of $\lam$ such that
$\si_3(c)=\si_3(M)$.

\begin{enumerate}

\item If $c$ is non-degenerate, then one of the following holds:

\begin{enumerate}

\item the endpoints of $c$ are both strictly preperiodic with the same
    preperiod and period;

\item the endpoints of $c$ are both not preperiodic, and $c$ is
    approximated from both sides by leaves of $\lam$ that have no common
    endpoints with $c$.

\end{enumerate}

\item If $M$ is non-critical, then its endpoints are either both periodic
    or both strictly preperiodic with the same preperiod and period, or
    both not preperiodic.

\end{enumerate}

In particular, a non-degenerate comajor is not periodic.

\end{Lemma}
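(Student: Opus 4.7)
The plan is to prove both parts using three tools: the length bound of Lemma \ref{short-leaves}, the gap preperiodicity Theorem \ref{nwt-thm}, and the periodic gap structure from Lemma \ref{fn-gaps}. The essential difficulty is a specific mixed-preperiodicity configuration that must be excluded.

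For Part (1), I first establish that the non-degenerate comajor $c$ is not periodic, which also yields the concluding ``in particular'' clause. Since $c$ is the short sibling of the non-critical medium/long major $M$, one has $\|c\|=|\tfrac13-\|M\||$. A direct calculation in the medium subcase ($\|m\|=1-3\|M\|$, $\|M\|\in[\tfrac16,\tfrac13)$) and in the long subcase ($\|m\|=3\|M\|-1$, $\|M\|\in(\tfrac13,\tfrac12]$) shows $\|c\|<\|m\|$ in both. Lemma \ref{short-leaves} applied to $m$ then gives $\|\sigma_3^k(m)\|\ge\|m\|>\|c\|$ for all $k\ge 0$, and since $\sigma_3(c)=m$ the orbit of $c$ never returns to $c$; hence $c$ is not periodic. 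If both endpoints $u,v$ of $c$ are preperiodic, $c$ is preperiodic as a leaf, and by non-periodicity it is strictly preperiodic; matching the unordered endpoint pair in $\sigma_3^{k_1}(c)=\sigma_3^{k_2}(c)$ gives both endpoints the same preperiod and the same period (either endpoints pair up directly, or a swap doubles the period of both simultaneously), yielding (1a). If both endpoints are not preperiodic, $c$ is not preperiodic as a leaf; then $c$ cannot bound a gap $G$ of $\lam$, since Theorem \ref{nwt-thm} would otherwise make $G$ preperiodic (the collapse alternative is incompatible with $\|\sigma_3^k(c)\|\ge\|m\|>0$ for $k\ge 1$) and Lemma \ref{fn-gaps} would then make the edge $c$ eventually periodic under the first return. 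Therefore $c$ is a two-sided limit of leaves of $\lam$, and an accumulation argument ensures the approximating leaves can be chosen to share no endpoint with $c$ (a forced shared endpoint would clash with non-preperiodicity via the obstacle handled next). This gives (1b).

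The principal obstacle is ruling out the ``exactly one endpoint preperiodic'' configuration, required to close both parts of the dichotomy. Suppose toward contradiction in Part (1) that $u$ is periodic of period $p$ while $v$ is not preperiodic. Then $c_n=\sigma_3^{np}(c)=\ol{u\,\sigma_3^{np}(v)}$ is an infinite family of distinct leaves of $\lam$ all sharing the endpoint $u$, each of length $\ge\|m\|$ for $n\ge 1$; by the symmetry axiom (D3), a parallel family $\{-c_n\}$ sits at $-u$. Applying Proposition \ref{str-prop} to $M$ pins the fan $\{\sigma_3^{np}(M)\}$ of images of $M$ outside $\text{int}\,\sh(M)$, and combining this with the geometry of the fans at $u,-u$ and the no-crossing property of $\lam$ forces a crossing or a violation of the short-strip constraint.

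Part (2) proceeds along the same lines, with two modifications: $M$ itself may be periodic (both endpoints are then periodic by inspecting $\sigma_3^p(M)=M$, possibly with a period doubling from an endpoint swap), and Proposition \ref{str-prop} applies directly to $M$. For the analog of the obstacle, if $a$ is periodic of period $p$ and $b$ is not preperiodic, then $\sigma_3^{np}(M)$ avoids $\text{int}\,\sh(M)$, which from the geometry of the short strips confines $\sigma_3^{np}(b)$ to the short arc $[a,b]$ for all $n\ge 0$; the resulting fan of images of $M$ through $a$ in the cap cut off by $M$, paired with its symmetric analog at $-a$, again produces a contradiction via no-crossing. I expect the verification of this crossing argument --- needing finer control on how many leaves can emanate from a single point of $\uc$ while respecting both the symmetry of $\lam$ and the short-strip exclusion --- to be the most delicate step.
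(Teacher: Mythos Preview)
This paper does not contain a proof of Lemma~\ref{cmajor-end-points}; the result is quoted from the companion paper \cite{bostv1} and only the statement appears here. So there is no in-paper proof to compare against, and I assess your proposal on its own terms.

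Your opening step is correct and clean: in fact $\|m\|=3\|c\|$, so Lemma~\ref{short-leaves} gives $\|\sigma_3^k(c)\|\ge\|m\|>\|c\|$ for all $k\ge 1$, and the non-degenerate comajor $c$ cannot be periodic. Beyond that, however, the argument has genuine gaps.

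\textbf{Same preperiod and period do not follow from leaf-preperiodicity.} In your treatment of case (1a) you infer from ``$c$ is strictly preperiodic as a leaf'' that both endpoints are strictly preperiodic with equal preperiod and period. This is a non sequitur: a leaf $\overline{uv}$ with $u$ periodic and $v$ strictly preperiodic is itself strictly preperiodic (its leaf-preperiod equals that of $v$), yet the endpoints have different preperiods. Your argument excludes ``both endpoints periodic'' (since then $c$ would be periodic) and nothing more; the subcase ``one periodic, one strictly preperiodic'' must be eliminated separately, as must ``both strictly preperiodic with unequal preperiods''.

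\textbf{The mixed case is not established.} You correctly identify the exclusion of ``exactly one endpoint (pre)periodic'' as the crux, but what you offer is only a gesture: an infinite fan of leaves through a periodic point $u$, together with its symmetric copy at $-u$, is said to ``force a crossing or a violation of the short-strip constraint''. Leaves sharing the endpoint $u$ never cross one another, and a symmetric lamination that is not assumed to be a q-lamination may well carry infinitely many leaves through a single vertex, so no contradiction arises automatically from the mere existence of the fan. You yourself flag this step as unverified; without it, neither Part~(1) nor Part~(2) closes.

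\textbf{A smaller gap in (1b).} Theorem~\ref{nwt-thm} allows an eventual image of the gap $G$ to be a \emph{leaf}, not only a point. Your length bound $\|\sigma_3^k(c)\|\ge\|m\|$ excludes collapse of the edge $c$ to a point but is perfectly consistent with $\sigma_3^k(G)$ being the non-degenerate leaf $\sigma_3^k(c)$ (via a precritical edge of $G$ other than $c$). In that scenario $G$ is not preperiodic, Lemma~\ref{fn-gaps} does not apply, and you have not excluded $c$ from being an edge of a gap on one side.
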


Comajors can be described more explicitly.

\begin{definition}[Legal pairs, Definition 5.6 \cite{bostv1}]\label{d:legal} Let a symmetric pair
$\{c,-c\}$ be either degenerate or satisfy the following:

\begin{enumerate}
    \item[(a)] no two iterated forward images of $c$ and $-c$ cross, and 

    \item[(b)] no forward image of $c$ crosses the interior of $\sh(c)$.
\end{enumerate}

Then $\{c, -c\}$ is said to be a \emph{legal pair}.

\end{definition}

We will also need an important concept of a \emph{pullback} of a set.

\begin{definition}[Pullbacks, Definition 5.7 \cite{bostv1}]\label{d:pullback}
Suppose that a family $\A$ of chords is given and $\ell$ is a chord. A
\emph{pullback chord of $\ell$ generated by $\A$} is a chord $\ell'$ with
$\si_3(\ell')=\ell$ such that $\ell'$ that does not cross chords from $\A$.
An \emph{iterated pullback chord of $\ell$ generated by $\A$} is a pullback
chord of an (iterated) pullback chord of $\ell$.
\end{definition}

Lemma \ref{l:16} considers two specific cases.

\begin{Lemma}[Lemma 5.8 \cite{bostv1}]\label{l:16}
The only two symmetric laminations with comajors of length $\frac16$ have two
critical Fatou gaps 
and are as follows.

\noindent $(1)$ $\lam_1$ has the comajor pair $(\ol{\frac16 \frac13}$,
$\ol{\frac23 \frac56})$. The gap $U_1'$ is invariant; $U'_1\cap \uc$ consists
of all $\ga\in \uc$ such that $\si_3^n(\ga)\in [0, \frac12]$. The gap $U''_1$
is invariant; $U'_1\cap \uc$ consists of all $\ga\in \uc$ such that
$\si_3^n(\ga)\in [\frac12, 0]$. The gaps $U_1', U_1''$ share an edge $\ol{0
\frac12}$; their edges are the appropriate pullbacks of $\ol{0\frac12}$ that
never separate $\ol{\frac16 \frac13}$, $\ol{\frac23 \frac56}$, and
$\ol{0\frac12}$.

\noindent $(2)$ $\lam_2$ has the comajor pair
$(\ol{\frac{11}{12}\frac{1}{12}}$, $\ol{\frac{5}{12} \frac{7}{12}})$. The
gaps $U'_2, U''_2$ form an orbit and the set $(U_2'\cup U_2'')\cap \uc$
consists of all $\ga\in \uc$ such that $\si_3^n(\ga)\in [\frac1{12},
\frac5{12}]\cup [\frac7{12}, \frac{11}{12}]$. The gaps $U_2', U_2''$ share an
edge $\ol{\frac14 \frac34}$; their edges are the appropriate pullbacks of
$\ol{\frac14 \frac34}$ that never separate $\ol{\frac{11}{12}\frac{1}{12}}$,
$\ol{\frac{5}{12} \frac{7}{12}}$ and $\ol{\frac14 \frac34}$.
\end{Lemma}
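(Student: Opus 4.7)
Let $\lam$ be a symmetric lamination with a comajor $c$ of length $\frac{1}{6}$. The minor $m=\sigma_3(c)$ then has length $3\cdot\frac{1}{6}=\frac{1}{2}$, i.e., $m$ is a diameter. Any two distinct diameters cross (since both pass through the center of $\disk$), and $\sigma_3$ sends diameters to diameters; hence the forward orbit $\{\sigma_3^n(m)\}_{n\ge 0}\subset \lam$ must reduce to $\{m\}$ by condition (L1), which forces $\sigma_3(m)=m$. So $m$ is one of the two $\sigma_3$-fixed diameters $\ol{0\,\frac{1}{2}}$ or $\ol{\frac{1}{4}\,\frac{3}{4}}$.

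Write $c=\ol{a\,a+\frac{1}{6}}$. The sibling formulas of Definition \ref{d:sibli} give $c'=\ol{a+\frac{1}{3}\,a-\frac{1}{6}}$, a diameter of length $\frac{1}{2}$, and $c''=\ol{a+\frac{2}{3}\,a+\frac{1}{2}}$, which equals $-c$ by direct computation. By the discussion following Definition \ref{d:sibli} (which in particular forbids an (mmm) sibling collection in a symmetric lamination; cf.\ Lemma \ref{l:sml}), the sibling collection of $c$ in $\lam$ is $\{c,c',c''\}=\{c,c',-c\}$, so $\lam$ contains the diameter $c'$. By the previous paragraph $\lam$ has at most one diameter, so $c'=m$. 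Solving $\{a+\frac{1}{3},\,a-\frac{1}{6}\}=\{\text{endpoints of }m\}$ modulo $1$: for $m=\ol{0\,\frac{1}{2}}$ we get $a\in\{\frac{1}{6},\frac{2}{3}\}$ and the comajor pair $\{\ol{\frac{1}{6}\,\frac{1}{3}},\,\ol{\frac{2}{3}\,\frac{5}{6}}\}$; for $m=\ol{\frac{1}{4}\,\frac{3}{4}}$ we get $a\in\{\frac{5}{12},\frac{11}{12}\}$ and the comajor pair $\{\ol{\frac{5}{12}\,\frac{7}{12}},\,\ol{\frac{11}{12}\,\frac{1}{12}}\}$. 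This establishes uniqueness.

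For existence, in each case I would take $\lam$ to be the closure of the set of iterated pullbacks of $m$ generated (in the sense of Definitions \ref{d:legal} and \ref{d:pullback}) by the legal pair $\{c,-c\}$; Theorem \ref{t:prelam-cl} then guarantees that $\lam$ is a $\sigma_3$-invariant lamination, and symmetry is manifest from the construction. In Case $(1)$, the Fatou gap $U_1'$ is the connected component of $\disk\setminus\lam^{*}$ on the $[0,\frac{1}{2}]$-side of $m$; I would verify that $U_1'\cap\uc=\{\gamma:\sigma_3^n(\gamma)\in[0,\frac{1}{2}]\ \forall n\ge 0\}$ and that the edges of $U_1'$ are precisely the pullbacks of $m$ with both endpoints in this invariant set. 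Since $\sigma_3$ restricts to a $2$-to-$1$ self-map of $U_1'\cap\uc$, the gap is a critical (degree-$2$) invariant Fatou gap; the symmetric half-disk produces $U_1''$, and the two share $m$ as a common edge. In Case $(2)$, the analogous construction uses the arcs $[\frac{1}{12},\frac{5}{12}]$ and $[\frac{7}{12},\frac{11}{12}]$, which are interchanged by $\sigma_3$, producing a period-$2$ orbit of critical Fatou gaps. The main obstacle is verifying that the described vertex sets are exactly the vertex sets of the claimed Fatou gaps with no subdivisions from further pullbacks; the crux is that any iterated pullback with both endpoints in $U_i'\cap\uc$ is forced by the non-separation condition of Definition \ref{d:pullback} to be an edge of $U_i'$ rather than an interior chord, while any pullback with an endpoint outside $U_i'\cap\uc$ lies beyond some already-constructed edge of $U_i'$ and therefore does not enter its interior.
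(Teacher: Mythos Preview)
The paper does not prove this lemma; it is quoted from \cite{bostv1} (Lemma~5.8 there) and used as a black box, so there is no proof here to compare against.

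On its own merits, your uniqueness argument has the right key idea---the minor $m=\si_3(c)$ is a diameter, diameters map to diameters, distinct diameters cross, hence $m$ is one of the two $\si_3$-fixed diameters---but the step that pins down the comajor pair from $m$ has a gap. You invoke the discussion after Definition~\ref{d:sibli} to conclude that the sibling collection of $c$ in $\lam$ is $\{c,c',c''\}$, hence $c'\in\lam$. That discussion is justified by ``all other possibilities lead to crossings with $\ell$ or $-\ell$'', but at the boundary length $\|c\|=\tfrac16$ this fails: the (mmm) collection $\{c,\ \ol{a+\tfrac13\,a+\tfrac12},\ \ol{a+\tfrac23\,a+\tfrac56}\}$ consists of three pairwise disjoint siblings of $c$, and its two non-$c$ members only \emph{share endpoints} with $-c=\ol{a+\tfrac12\,a+\tfrac23}$ rather than crossing it. So you have not forced $c'\in\lam$, and in particular have not ruled out the other two symmetric pairs of length-$\tfrac16$ preimages of $m$ (for $m=\ol{0\,\tfrac12}$ these are $\{\ol{0\,\tfrac16},\ol{\tfrac12\,\tfrac23}\}$ and $\{\ol{\tfrac13\,\tfrac12},\ol{\tfrac56\,0}\}$). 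A clean fix is to invoke Lemma~\ref{cmajor-end-points}(1): a non-degenerate comajor has both endpoints strictly preperiodic with the \emph{same} preperiod and period (or both non-preperiodic). Each of the two spurious pairs has one $\si_3$-fixed endpoint and one endpoint of preperiod $1$, so neither can be a comajor; only $\{\ol{\tfrac16\,\tfrac13},\ol{\tfrac23\,\tfrac56}\}$ survives (and analogously for $m=\ol{\tfrac14\,\tfrac34}$).

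Your existence paragraph is an honest sketch (``I would verify\dots'', ``The main obstacle is\dots'') rather than a proof; the identification of the Fatou-gap vertex sets with the stated itineraries, and the claim that no pullback of $m$ subdivides the candidate gaps, both require real work that you have flagged but not carried out.
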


Though the laminations from Lemma \ref{l:16} are not the pullback laminations
described below, knowing them allows us to consider only legal pairs with
comajors of length less than $\frac16$ and streamline the proofs.

\begin{center} \textbf{Construction of a symmetric pullback lamination $\lam(c)$ for a legal pair $\{c, -c\}$.} \end{center}

\noindent\textbf{Degenerate case.} For $c\in\uc$, let $\pm \ell=\pm M_c$.
 (call $\ell$, $-\ell$ and their pullbacks
``leaves'' even though we apply this term to existing laminations, and we are
only constructing one). Consider two cases.

(a) If $\ell$ and $-\ell$ do not have periodic endpoints, then the family of
all iterated pullbacks of $\ell, -\ell$ generated by $\ell, -\ell$ is denoted
by $\mathcal{C}_c$.

(b) Suppose that $\ell$ and $-\ell$ have periodic endpoints $p$ and $-p$.
Then there are two similar cases. First, the orbits of $p$ and $-p$ may be
distinct (and hence disjoint). Then iterated pullbacks of $\ell$ generated by
$\ell$, $-\ell$ are well-defined (unique) until 
 the $n$-th step ($n$ equals
the period of $p$ \emph{and} the period of $-p$), when there are two iterated
pullbacks of $\ell$ that have a common endpoint $x$ and 
 share other endpoints with $\ell$. 
Two other iterated pullbacks of $\ell$ 
 have a common endpoint $y\ne 0$ and 
 share other endpoints with $\ell$.
These four iterated pullbacks of $\ell$ form a collapsing quadrilateral $Q$
with diagonal $\ell$;
 moreover, $\si_3(x)=\si_3(y)$ and $\si_3^n(x)=\si_3^n(y)=z$ is
 the non-periodic endpoint of $\ell$. Evidently, $\si_3(Q)=\ol{\si_3(p)\si_3(x)}$ is
the $(n-1)$-st iterated pullback of $\ell$. Then in the pullback lamination
$\lam(c)$ that we are defining we postulate the choice of \emph{only the
short pullbacks} among the above listed iterated pullbacks of $\ell$. So,
only two short edges of $Q$ are included in the set of pullbacks
$\mathcal{C}_c$. A similar situation holds for $-\ell$ and its iterated
pullbacks.

In general, the choice of pullbacks of the already constructed leaf $\hell$
is ambiguous only if $\hell$ has an endpoint $\si_3(\pm \ell)$. In this case
we \emph{always} choose short pullbacks of $\hell$. Evidently, this defines a
set $\mathcal{C}_c$ of chords in a unique way.

We claim that $\mathcal{C}_c$ is an invariant prelamination. To show that
$\mathcal{C}_c$ is a prelamination we need to show that its leaves do not
cross. Suppose otherwise and choose the minimal $n$ such that $\hell$ and
$\tell$ are pullbacks of $\ell$ or $-\ell$ under at most the $n$-th iterate
of $\si_3$ that cross. By construction, $\hell, \tell$ are not critical.
Hence their images $\si_3(\hell),$ $\si_3(\tell)$ are not degenerate and do
not cross. It is only possible if $\hell, \tell$ come out of the endpoints of
a critical leaf of $\lam$. We may assume that $\|\hell\|\ge \frac16$ (if
$\hell$ and $\tell$ are shorter than $\frac16$ then they cannot cross).
However by construction this is impossible. Hence $\mathcal{C}_c$ is a
prelamination. The claim that $\mathcal{C}_c$ is invariant is
straightforward; its verification is left to the reader. By Theorem
\ref{t:prelam-cl}, the closure of $\mathcal{C}_c$ is an invariant lamination
denoted $\lam(c)$. Moreover, by construction $\mathcal{C}_c$ is symmetric
(this can be easily proven using induction on the number of steps in the
process of pulling back $\ell$ and $-\ell$). Hence $\lam(c)$ is a symmetric
invariant lamination.

\noindent\textbf{Non-degenerate case.} As in the degenerate case, we will
talk about leaves even though we are still constructing a lamination. By
Lemma \ref{l:16}, we may assume that $|c|<\frac16$. Set $\pm M=\pm M_c, \pm
Q=\pm Q_c$. If $d$ is an iterated forward image of $c$ or $-c$, then, by
Definition \ref{d:legal}(b), it cannot intersect the interior $Q$ or $-Q$.
Consider the set of leaves $\mathcal{D}$ formed by the edges of $\pm Q$ and
$\bigcup_{m=0}^{\infty}\{\sigma_3^{m}(c),\sigma_3^{m}(-c)\}$. It follows
that leaves of $\mathcal{D}$ do not cross among themselves. The idea is to
construct pullbacks of leaves of $\mathcal{D}$ in a step-by-step fashion and
show that this results in an invariant prelamination $\mathcal{C}_c$ as in
the degenerate case.

More precisely, we proceed by induction. Set $\mathcal{D}=\mathcal{C}_c^0$.
Construct sets of leaves $\mathcal{C}_c^{n+1}$ by collecting pullbacks of
leaves of $\mathcal{C}_c^{n}$ generated by $Q$ and $-Q$ (the step of
induction is based upon Definition \ref{d:legal} and Definition
\ref{d:pullback}). The claim is that except for the property (D2)(1) from
Definition \ref{inv-lam} (a part of what it means for a lamination to be
backward invariant), the set $\mathcal{C}_c^n$ has all the properties of
invariant laminations listed in Definition \ref{inv-lam}. Let us verify this
property for $\mathcal{C}_c^1$. Let $\ell\in \mathcal{C}_c^1$. Then
$\si_3(\ell)\in \mathcal{D}$, so property (D1) from Definition \ref{inv-lam}
is satisfied. Property (D2)(2) is, evidently, satisfied for edges of $Q$ and
$-Q$. If $\ell$ is not an edge of $\pm Q$, then, since leaves $\pm
\si_3(Q)=\si_3(\pm c)$ do not cross $\si(\ell)$, and since on the closure of
each component of $\uc\sm [Q\cup -Q]$ the map is one-to-one, then $\ell$ will
have two sibling leaves in $\mathcal{C}_c^1$ as desired. Literally the same
argument works for $\ell\in \mathcal{C}_c^{n+1}$ and proves that each set
$\mathcal{C}_c^{n+1}$ has properties (D1) and (D2)(2) from Definition
\ref{inv-lam}. This implies that $\bigcup_{i\ge 0}
\mathcal{C}_c^i=\mathcal{C}_c$ has all properties from Definition
\ref{inv-lam} and is, therefore, an invariant prelamination. By Theorem
\ref{t:prelam-cl}, its closure $\lam(c)$ is an invariant lamination.

The lamination $\lam(c)$ is called the \emph{pullback lamination (of $c$);}
we often use $c$ as the argument, instead of the less discriminatory $\{c,
-c\}$.

\begin{Lemma}[Lemma 5.9 \cite{bostv1}]\label{pull-back-lam1}
A legal pair $\{c,-c\}$ is the comajor pair of the  lamination $\lam(c)$. A
symmetric pair $\{c, -c\}$ is a comajor pair if and only if it is legal.
\end{Lemma}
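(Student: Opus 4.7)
I address the two implications separately. Throughout, $M=M_c$ and $M'=M'_c$ denote the medium/long siblings of $c$ (when non-degenerate), and I write $w=3\|c\|$, so that $\|M\|-\tfrac{1}{3}=\tfrac{1}{3}-\|M'\|=w/3$.

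For ``$\{c,-c\}$ is a comajor pair of a symmetric lamination $\lam$ $\Rightarrow$ legal'': in the degenerate case, legality is immediate from Definition \ref{d:legal}, so assume $c$ is a non-degenerate short chord. Condition (a) is immediate since all forward iterates of $\pm c$ are leaves of $\lam$ and hence pairwise non-crossing. For condition (b), the leaf $M\in\lam$ is a major and hence closest to criticality in $\lam$; by the second assertion of Proposition \ref{str-prop}, no forward image of $M$ enters $\mathrm{int}(\sh(M))$. Since $\sigma_3(c)=\sigma_3(M)$, the iterates $\sigma_3^k(c)$ and $\sigma_3^k(M)$ coincide for all $k\ge 1$, and $c$ itself is disjoint from $\sh(M)=\sh(c)$ (it lies in the cap of $\disk\setminus\sh(M)$ behind $-M'$); hence (b) holds.

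For ``legal $\Rightarrow$ comajor pair of $\lam(c)$'': the degenerate case is straightforward since the critical leaves $\pm M_c$ of $\lam(c)$ are trivially its majors, and the case $\|c\|=\tfrac{1}{6}$ is covered by Lemma \ref{l:16}, so I assume $\|c\|<\tfrac{1}{6}$, which ensures that $Q$ and $-Q$ are disjoint. By construction $\pm c,\pm M,\pm M'\in\lam(c)$, so it suffices to show that no leaf of $\lam(c)$ is closer to $\tfrac{1}{3}$ than $M$. I argue by induction on the level $n$ of the filtration $\mathcal{C}_c=\bigcup_n\mathcal{C}_c^n$: at $n=0$, the edges of $\pm Q$ have lengths $\|M\|,\|M'\|$, or $\|c\|$, each at distance at least $w/3$ from $\tfrac{1}{3}$, and by legality (b) the iterates $\sigma_3^k(\pm c)$ lie in $\disk\setminus(\mathrm{int}(Q)\cup\mathrm{int}(-Q))$; at level $n+1$, every pullback generated by $\pm Q$ avoids $\mathrm{int}(\pm Q)$ by Definition \ref{d:pullback} and therefore lies in the same complement. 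It remains to verify that every chord in any of the seven components of $\disk\setminus(\mathrm{int}(Q)\cup\mathrm{int}(-Q))$ has distance at least $w/3$ from $\tfrac{1}{3}$: the six ``caps'' behind the non-$M$ edges of $\pm Q$ are bounded by arcs of $\uc$ of length at most $\|M'\|=\tfrac{1}{3}-w/3$, so chords therein have length at most $\tfrac{1}{3}-w/3$; the remaining component is the ``middle band'' between $\pm M$, bounded by $\pm M$ and two arcs of $\uc$ of length $\tfrac{1}{6}-w/3$, and an explicit parametrization of chords with one endpoint in each of these arcs shows that such chords have length in $(\|M\|,\tfrac{1}{2}]$. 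The bound is preserved under closure, completing the proof. The main technical obstacle is this middle-band estimate, where the hypothesis $\|c\|<\tfrac{1}{6}$ is used essentially to ensure that the two bounding arcs of the band have positive length.
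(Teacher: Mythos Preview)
The paper does not prove this lemma here; it is quoted as Lemma 5.9 of \cite{bostv1} and stated without proof, so there is no in-paper argument to compare against. Your proof stands on its own and is essentially correct: the forward direction is an immediate consequence of Proposition~\ref{str-prop} applied to the major $M$, and for the backward direction the geometric decomposition of $\disk\setminus(\mathrm{int}\,Q\cup\mathrm{int}(-Q))$ into seven pieces, together with the elementary length estimates in each, cleanly shows that $M$ and $M'$ are closest to criticality in $\lam(c)$.

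Two small points you should tighten. First, in the middle-band estimate you only treat chords with one endpoint in each of the two bounding arcs; you should also dispose of chords with both endpoints in the \emph{same} arc, which is immediate since such chords have length at most $\tfrac{1}{6}-w/3<\tfrac{1}{3}-w/3$. Second, in the inductive step you assert that every pullback generated by $\pm Q$ avoids $\mathrm{int}(\pm Q)$; strictly speaking, Definition~\ref{d:pullback} only forbids crossing the \emph{edges} of $\pm Q$, so a pullback could in principle be a diagonal of $Q$ or $-Q$. This does not occur because those diagonals are critical chords (their $\sigma_3$-images are points), whereas every leaf in $\mathcal C_c^n$ is non-degenerate by an easy induction---but this deserves one explicit sentence.
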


Theorem \ref{t:oldmain} summarizes the main results of \cite{bostv1}.
\emph{Co-periodic comajors} are defined as preperiodic comajors of preperiod 1.
The name is due to the fact that a co-periodic comajor is a sibling of a periodic major.

\begin{theorem}\label{t:oldmain}
The set of all comajors of cubic symmetric laminations is a q-lamination.
Co-periodic comajors are disjoint from all other comajors.
\end{theorem}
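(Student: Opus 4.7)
My plan is to prove the two assertions in turn, leaning on the characterization of comajors as legal pairs (Lemma \ref{pull-back-lam1}), on the pullback construction $\lam(c)$, and on the length-control results Proposition \ref{str-prop} and Lemma \ref{short-leaves}. First I would establish that $C_sCL$, viewed as the set of all comajors of all cubic symmetric laminations, is a closed $\si_3$-invariant lamination whose equivalence classes are finite; then I would prove that a co-periodic comajor is disjoint from every other comajor.

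The principal difficulty is showing that two comajors $c_1$ and $c_2$, coming from possibly distinct cubic symmetric laminations, never cross. Assume for contradiction that they do and relabel so that $\|c_1\|\le \|c_2\|$; by Lemma \ref{l:16} we may assume both lengths are strictly less than $\tfrac16$. Since $\{c_2,-c_2\}$ is legal, I would exploit the pullback lamination $\lam(c_2)$ to argue that $c_1$ together with $-c_1$ must respect the sibling structure coming from the critical gaps $Q_{c_2}$ and $-Q_{c_2}$; otherwise some iterate of $c_1$ or $-c_1$ must enter the interior of the short strip $\sh(c_2)$, contradicting either Proposition \ref{str-prop} applied to $c_2$ or the legal-pair condition of $\{c_1,-c_1\}$. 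Combining this with the symmetric role of $c_1$ relative to $\lam(c_1)$ forces a contradiction. This non-crossing step is the cubic symmetric counterpart of Thurston's theorem that leaves of $\qml$ do not cross and is the main obstacle of the whole proof. Granted it, closedness of $C_sCL$ follows because the conditions in Definition \ref{d:legal} are preserved under limits, so a limit of legal pairs is a legal pair and hence is a comajor pair. The $\si_3$-invariance properties (D1) and (D2) come from identifying the image and preimages of a comajor $c$ with chords already realized in $\lam(c)$ and in neighboring symmetric laminations obtained by applying the pullback construction to $\si_3(c)$ and to the short siblings in a sibling collection over $c$. Finiteness of the $\sim$-classes is then forced by Lemma \ref{cmajor-end-points}: its dichotomy on comajor endpoints prevents more than finitely many comajors from meeting at a single point and in particular rules out infinite concatenations.

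For the second claim, let $c$ be a co-periodic comajor, so that $c$ is strictly preperiodic of preperiod exactly $1$ and $\si_3(c)$ is a periodic major $M$. Suppose a comajor $c'\ne c$ shares an endpoint $x$ with $c$. Then $x$ is strictly preperiodic with preperiod $1$, so by Lemma \ref{cmajor-end-points}(1) the other endpoint of $c'$ must be strictly preperiodic of the same preperiod and period, and $c'$ is itself co-periodic. Applying $\si_3$, the image $\si_3(c')$ is a periodic leaf sharing the endpoint $\si_3(x)$ with $M$. I would then invoke Lemma \ref{fn-gaps} on the periodic polygon $G$ carrying $M$: in the $1$-rotational case (a) only one periodic leaf in the orbit of $M$ can emanate from $\si_3(x)$, so $\si_3(c')=M$ and then the sibling analysis forces $c'=c$ or $c'=-c$; in the $2$-rotational case (b) the second periodic edge at $\si_3(x)$ lies in $-G$, again pinning $c'$ to $\pm c$. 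Since $-c$ does not share an endpoint with $c$ when $c$ is non-degenerate, this contradicts $c'\ne c$ and shows that $c$ is disjoint from every other comajor, completing the proof.
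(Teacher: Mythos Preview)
The paper does not prove Theorem~\ref{t:oldmain} at all: the sentence introducing it says explicitly that it ``summarizes the main results of \cite{bostv1}'', and no argument is given here. Your proposal is therefore not comparable to anything in the present paper; it is an attempt to reconstruct a proof whose details live entirely in \cite{bostv1}.

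Independently of that, your sketch has genuine problems. The most concrete one is the $\si_3$-invariance step: you try to verify (D1) and (D2) for $C_sCL$, but (D1) is simply false for non-degenerate co-periodic comajors. If $c$ is co-periodic then $\si_3(c)$ is a non-degenerate \emph{periodic} leaf, and Lemma~\ref{cmajor-end-points} says a non-degenerate comajor is never periodic; hence $\si_3(c)\notin C_sCL$. So either ``q-lamination'' is being used here in the weaker sense (closed, pairwise non-crossing, finite $\sim$-classes) appropriate to a parameter lamination, or the precise statement in \cite{bostv1} differs from the paper's Definition of q-lamination; in either case your attempt to prove full $\si_3$-invariance is aimed at the wrong target. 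Second, your argument for the disjointness of a co-periodic comajor $c$ from any other comajor $c'$ invokes Lemma~\ref{fn-gaps} on ``the periodic polygon $G$ carrying $M$'', but $c$ and $c'$ come from possibly different symmetric laminations and you never place $\si_3(c)$ and $\si_3(c')$ as edges of a common finite gap in a single lamination; without that, Lemma~\ref{fn-gaps} does not apply. Even granting a common gap, in the $1$-rotational case two edges of $G$ meet at each vertex, so the conclusion ``$\si_3(c')=M$'' does not follow. Finally, the core non-crossing step (two comajors from different laminations do not cross) is the real content of the theorem, and your paragraph only gestures at it; the actual argument in \cite{bostv1} is substantially more involved than what you outline.
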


Based upon this theorem we define the main object of our interest.

\begin{definition}
\label{d:cscl} All comajors of cubic symmetric laminations form a lamination
$C_sCL$ called the \emph{Cubic symmetric Comajor Lamination}.
\end{definition}

The following useful notation is justified by Theorem \ref{t:oldmain}.

\begin{definition}\label{under-dfn}
For a non-diameter chord $n=\overline{ab}$, the smaller of the two arcs into
which $n$ divides $\uc$, is denoted by $H(n)$. Denote the closed subset of
$\overline{\D}$ bounded by $n$ and $H(n)$ by $R(n)$. Given two comajors $m$
and $n$, write $m\prec n$ if $m\subset R(n)$, and say that $m$ is
\emph{under} $n$.
\end{definition}

\begin{Lemma}[Lemma 5.14 \cite{bostv1}]\label{cmajor-comp-d}
Let $\{c,-c\}$ and $\{d,-d\}$ be legal pairs, where $c$ is degenerate and
$c\prec d$. Suppose that $c$ is not an endpoint of $d$, or $\si_3(c)$ is not
periodic. Then $d\in \lam(c)$. In addition, the following holds.

\begin{enumerate}

\item Majors $D$, $D'$ of $\lam(d)$ are leaves of $\lam(c)$ unless $\lam(c)$
    has two finite gaps $G, G'$ that contain $D, D'$ as their diagonals,
    share a critical leaf $M$ of $\lam(c)$ as a common edge, and are such
    that $\si_3(G)=\si_3(G')$ is a preperiodic gap.

\item If majors of $\lam(d)$ are leaves of $\lam(c)$ and $\ell\in\lam(d)$
    is a leaf that never maps to a short side of a collapsing quadrilateral
    of $\lam(d)$, then $\ell\in\lam(c)$.

\end{enumerate}

\end{Lemma}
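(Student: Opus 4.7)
The plan is to establish $d\in\lam(c)$ first, and then deduce parts (1) and (2) by combining backward invariance of $\lam(c)$ with the explicit pullback construction of $\lam(c)$ described before Lemma \ref{pull-back-lam1}.

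To show $d\in\lam(c)$, I would prove that $d$ is compatible with every leaf of the prelamination $\mathcal{C}_c$ and then identify $d$ as either an iterated pullback or a limit of such pullbacks in the closure $\lam(c)=\ol{\mathcal{C}_c}$. The base step is easy: $d$ is short with $c\in R(d)$, so both endpoints of $d$ lie on the short arc $H(d)$, which is on one side of the critical chord $M_c=\ol{(c+1/3)(c+2/3)}$; hence $d$ does not cross $\pm M_c$. For the inductive step, suppose that $d$ crosses some iterated pullback $\hell$ of $\pm M_c$ and take the minimal level $n$ for which this can occur. Applying $\si_3$ pushes the crossing one level down: the minor $\si_3(d)$ would then cross a lower-level pullback of $\pm M_c$, or else the crossing forces a forward image of $d$ or $-d$ into a position that violates legality. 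At this point one uses the legality of $\{c,-c\}$ (forward images of $\pm c$ do not cross and do not enter the interior of $\sh(c)$, Proposition \ref{str-prop}) and the legality of $\{d,-d\}$ (Definition \ref{d:legal}(a,b)) to derive a contradiction. The hypothesis that $c$ is not an endpoint of $d$ \emph{or} that $\si_3(c)$ is not periodic is needed precisely to rule out the degenerate situation in which the orbit of $d$ collides with the collapsing quadrilateral that appears in the periodic case of the construction.

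With $d\in\lam(c)$ in hand, part (1) follows from backward invariance (D2)(2) applied to $d$: there must exist three pairwise disjoint sibling leaves of $\lam(c)$ with image $\si_3(d)$, and the only candidates in the plane are $d,D,D'$. Hence $D,D'\in\lam(c)$ unless the construction of $\lam(c)$ has explicitly omitted them. The only mechanism for such omission is the step in the degenerate periodic case in which, out of the four iterated pullbacks of $M_c$ forming a collapsing quadrilateral $Q$, only the two short edges are retained while the two long edges are discarded. When $D,D'$ are precisely those two discarded long edges, they are not leaves of $\lam(c)$ but they serve as diagonals of the two triangular gaps $G,G'$ obtained by cutting $Q$ along the critical leaf $M=M_c$; the images $\si_3(G)=\si_3(G')$ both equal the preperiodic gap $\ol{\si_3(c)\si_3(x)}$ identified in the construction.

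For part (2), proceed by induction on the pullback depth of $\ell$ in $\lam(d)$. The base case consists of the forward orbit of $\pm d$, $\pm D$, $\pm D'$; these belong to $\lam(c)$ by $d\in\lam(c)$, the hypothesis on majors, and forward invariance. For the inductive step, a leaf $\ell\in\lam(d)$ of depth $n{+}1$ satisfies $\si_3(\ell)\in\lam(c)$ by the induction hypothesis, so backward invariance of $\lam(c)$ produces three disjoint siblings in $\lam(c)$ with image $\si_3(\ell)$. These must match the three siblings chosen in $\lam(d)$ unless the two constructions make incompatible selections, which happens exactly when one of them is a short side of a collapsing quadrilateral of $\lam(d)$; excluding this case is the content of the hypothesis on $\ell$.

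The main obstacle is the bookkeeping around the collapsing quadrilateral in the degenerate periodic case: one has to track precisely which of the four edges are retained as short and which are discarded as long, and then verify that both exceptional hypotheses (the one in the main statement and the ones in (1) and (2)) are exactly what is needed to isolate this single phenomenon. Once this case analysis is done, the rest is a direct application of the invariance properties of laminations recorded in Definition \ref{inv-lam} and Theorem \ref{t:prelam-cl}.
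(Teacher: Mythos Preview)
This lemma is quoted from \cite{bostv1} (as Lemma~5.14 there) and is not proved in the present paper, so there is no proof here against which to compare your proposal.

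Evaluated on its own terms, your outline has one structural gap and one imprecision. The gap: compatibility is not membership. Showing that $d$ crosses no leaf of $\mathcal{C}_c$ (or of $\lam(c)$) does not by itself give $d\in\lam(c)$; a chord compatible with a lamination can perfectly well be a diagonal of a gap rather than a leaf --- indeed, that is exactly what happens to $D,D'$ in the exceptional clause of part~(1). Your sentence ``identify $d$ as either an iterated pullback or a limit of such pullbacks'' names the missing step but supplies no argument for it. You need an actual mechanism forcing pullbacks of $\pm M_c$ to accumulate on $d$ (for instance, using that $c\in H(d)$ places $M_c$ in the strip between $D$ and $D'$, so iterated pullbacks of $M_c$ generated along the forward orbit of the majors squeeze onto $D,D'$ and hence onto $d$). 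The imprecision is in your description of the exceptional case: you take $G,G'$ to be the two triangles obtained by bisecting the first-step collapsing quadrilateral $Q$ along $M_c$, with $\si_3(G)=\si_3(G')$ equal to the \emph{leaf} $\ol{\si_3(p)\si_3(x)}$. But the lemma asserts $\si_3(G)=\si_3(G')$ is a preperiodic \emph{gap}, and nothing pins $G,G'$ to be triangles or to arise only at the very first pullback stage; the short-pullback rule is invoked at every level, so $G,G'$ can be larger polygons. Separately, ``the only candidates in the plane are $d,D,D'$'' skips the check that an (sss) sibling collection of $d$ would cross $\pm M_c$ (because $c\in H(d)$ forces the rotated short siblings to straddle endpoints of $M_c$), which is what forces the (sml) collection.
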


\section{Fatou conjecture on density of hyperbolicity}\label{s:fatou}

Co-periodic comajors correspond to periodic majors.
In Section \ref{s:fatou} we associate them with q-laminations with periodic Fatou gaps
of degree greater than 1 and show that these are dense.

\begin{definition}\label{d:hyperb}
If a  symmetric lamination $\lam$  has a periodic Fatou gap of degree greater
than $1$ (i.e., if it has properties listed in Lemma \ref{l:hyper1}), then
$\lam$ is called \emph{hyperbolic}.
\end{definition}

We need a result of \cite{bmov13}. Recall that, as in Definition
\ref{d:gen-lam}, a lamination $\lam$ generates an equivalence relation
$\sim_\lam$ on $\uc$ by declaring that $a\sim_\lam b$ if and only if a finite
concatenation of leaves of $\lam$ connects points $a\in \uc$ and $b\in \uc$.

\begin{definition}[Proper lamination, Definition 4.1 \cite{bmov13}]
Two leaves with a common endpoint $v$ and the same image which is a leaf (and
not a point) are said to form a \emph{critical wedge} (the point $v$ then is
said to be its vertex). A lamination $\lam$ is \emph{proper} if it contains
no critical leaf with periodic endpoint and no critical wedge with periodic
vertex.
\end{definition}

Proper laminations generate laminational equivalence relations.

\begin{theorem}[Theorem 4.9 \cite{bmov13}]\label{t:nowander}
Let $\lam$ be a proper invariant lamination. Then $\sim_\lam$ is an invariant
laminational equivalence relation.
\end{theorem}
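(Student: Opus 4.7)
My plan is to check, in turn, that $\sim_\lam$ has closed graph in $\uc\times \uc$, pairwise disjoint convex hulls of distinct classes, only finite classes, and the three invariance properties (forward invariance, backward invariance, and a positively oriented covering on classes of cardinality at least three). The main obstacle is finiteness of classes; the remaining items will flow from (L1), (L2), (D1), (D2), Theorem \ref{nwt-thm}, and compactness of $\lam$.

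For pairwise disjointness of convex hulls I would use a linking argument. If distinct classes $\mathbf{g}_1\ne\mathbf{g}_2$ had meeting convex hulls, I could choose $a,b\in\mathbf{g}_1$ and $c,d\in\mathbf{g}_2$ linked on $\uc$. The concatenations of leaves witnessing $a\sim_\lam b$ and $c\sim_\lam d$ are Jordan arcs in $\cdisk$ with linked endpoints, so they must meet in $\cdisk$. At that meeting point two leaves of $\lam$ share a point: either they cross, contradicting (L1), or they share an endpoint on $\uc$, which then lies in both chains and hence in both classes, forcing $\mathbf{g}_1=\mathbf{g}_2$.

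The hard part is ruling out infinite classes. Suppose, for contradiction, that some class $\mathbf{g}$ is infinite. Using disjointness of convex hulls together with (L2), I would argue that $G=\ch(\mathbf{g})$ contains an infinite gap of $\lam$ whose vertex set is all of $\mathbf{g}$, so I may assume $G$ itself is an infinite gap. Theorem \ref{nwt-thm} then forces $G$ to be preperiodic, and a forward image $G^*$ is a periodic infinite gap with first-return map $g$. If $G^*$ has degree at least $2$ on its boundary (the critical case), a combinatorial inspection similar in spirit to Lemma \ref{fn-gaps} produces either a critical edge of $G^*$ or a critical wedge whose vertex lies on a finite $g$-periodic orbit; both are excluded by properness. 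If $G^*$ has degree one, $g$ acts on its vertex set as an orientation-preserving homeomorphism, and the infinitude of vertices together with closedness of $\lam$ concentrates leaves at some $g$-periodic vertex; pulling back along the periodic orbit then produces a critical leaf or wedge with periodic vertex, again excluded by properness. Hence every class is finite. Closedness of the graph of $\sim_\lam$ then follows from compactness of $\lam$: chains of uniformly bounded length subconverge leafwise along any convergent sequence of related pairs.

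The three invariance properties are then essentially formal. Forward invariance holds because, by (D1), $\si_d$ maps each leaf of $\lam$ to a leaf or a point, hence sends chains to chains. Backward invariance holds because, by (D2), each leaf of $\lam$ has $d$ disjoint sibling pullbacks, and these assemble into full $\sim_\lam$-classes over every class in $\si_d(\mathbf{g})$. The positively oriented covering on classes of cardinality at least three follows from the local bijectivity and orientation-preservation of $\si_d$ on $\uc$, once one knows that both the class and its image are finite cyclically ordered subsets of $\uc$. The real difficulty throughout lies in the periodic-critical-pathology analysis in the finiteness step: that is the one place where properness is genuinely used, and where the proof could otherwise fail.
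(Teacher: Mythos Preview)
The paper does not prove this statement: it is quoted as Theorem~4.9 of \cite{bmov13} and used as an imported black box, so there is no in-paper proof to compare your sketch against.

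That said, your outline has real gaps that would need to be closed before it could stand on its own. First, you invoke Theorem~\ref{nwt-thm} to force an infinite gap to be preperiodic, but that theorem is stated and proved only for \emph{cubic symmetric} laminations, whereas Theorem~\ref{t:nowander} concerns arbitrary proper $\sigma_d$-invariant laminations; you cannot appeal to the special case to establish the general one. Second, the reduction ``infinite class $\Rightarrow$ infinite gap whose vertex set is all of $\mathbf{g}$'' is not justified: an infinite class can arise from an infinite concatenation of leaves (a caterpillar), in which case $\ch(\mathbf{g})$ is sliced up by the chain itself and need not contain any single gap with vertex set $\mathbf{g}$. The actual argument in \cite{bmov13} has to deal with precisely this kind of structure, and it is more delicate than ``a combinatorial inspection similar in spirit to Lemma~\ref{fn-gaps}'' (which, in any case, is about \emph{finite} periodic polygons). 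Finally, your closedness step assumes ``chains of uniformly bounded length,'' but nothing you have proved gives such a uniform bound; finiteness of each individual class does not imply it.
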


We also need a nice result due to Jan Kiwi \cite{kiw02}.

\begin{theorem}[\cite{kiw02}]
\label{t:kiwi} Let $\lam$ be a $\si_d$-invariant lamination. Then any
infinite gap of $\lam$ is (pre)periodic. For any finite periodic gap $G$ of
$\lam$ its vertices belong to at most $d-1$ distinct cycles except when $G$
is a fixed return $d$-gon. In particular, a cubic lamination cannot have a
fixed return $n$-gon for $n>3$. Moreover, if all images of a $k$-gon $G$ with
$k>d$ have at least $d+1$ vertices then $G$ is preperiodic.
\end{theorem}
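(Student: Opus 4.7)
The plan is to establish the four assertions in order: (a) every infinite gap is (pre)periodic, (b) the cycle bound for periodic polygons, (c) the fixed-return $d$-gon exception, and (d) the wandering bound for $k$-gons with $k>d$. The natural organization is to derive (a) and (d) together via a single wandering-gap argument, then treat (b) and (c) by analyzing the first return map on vertices.

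First, I would reduce (a) to (d). Given an infinite gap $G$ of $\lam$ that is not (pre)periodic, one may select a finite subset $V\subset G\cap\uc$ of more than $d$ points whose convex hull is combinatorially wandering under $\si_d$. This uses the fact that non-(pre)periodic infinite gaps have pairwise disjoint forward images, and that an infinite gap cannot have all sufficiently large finite vertex subsets land in a common cycle of convex hulls (since otherwise the images of $G$ would have bounded vertex-count, contradicting expansion of $\si_d$ on edges of positive length). After possibly replacing $V$ by $\si_d^n(V)$ for $n$ large, every forward image of $\ch(V)$ can be arranged to have at least $d+1$ vertices, reducing the assertion to (d).

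For (d), I would apply the length-budget argument of Thurston--Levin--Kiwi. Let $G_n=\si_d^n(G)$ be the images of the supposed wandering $k$-gon, each with at least $d+1$ vertices. Track the shortest edge length $s_n$ of $G_n$. Because $\si_d$ is locally expanding by factor $d$, the shortest edge of $G_n$ either maps to an edge of $G_{n+1}$ of length $d\cdot s_n$ or is ``contracted'' by passing through a critical chord, at which point one of its complementary arcs is folded. Since the $G_n$ are pairwise disjoint convex subsets of $\cdisk$, the sum $\sum_n s_n$ is finite. Meanwhile, contraction events are charged to critical objects of $\lam$, whose total weight is at most $d-1$ across the whole forward orbit when each image retains $\ge d+1$ vertices. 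Balancing these two constraints forces $s_n$ to decay faster than any geometric rate yet slowly enough to contradict disjointness of the $G_n$, yielding the result.

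For (b) and (c), let $G$ be a periodic polygon with first return map $g$. Since $g$ preserves the cyclic order on $G\cap\uc$ and is a covering map of $\Bd(G)$, it factors through a rigid rotation of $\uc$ of some rational angle $p/q$; orbits of $g$ on the vertex set coincide with orbits of this rotation. The number of such orbits is exactly the degree of $g$ regarded as a covering of its image, and a standard critical-counting argument bounds this by $d-1$ with a single exception: when $g$ is a full degree-$d$ self-covering acting as the identity rotation, in which case every vertex is fixed and $G$ must be a $d$-gon, giving (c). For cubic laminations ($d=3$) this forces $n\le 3$ in the last assertion. The main obstacle is (d), whose delicate shortest-edge bookkeeping across critical events is the technical heart of \cite{kiw02}; the remaining statements follow from combinatorial rotation analysis once (d) is in hand.
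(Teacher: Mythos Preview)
The paper does not prove this theorem at all: it is quoted verbatim from Kiwi \cite{kiw02} and used as a black box, so there is no ``paper's own proof'' to compare with. Your outline therefore has to be judged on its own merits as a sketch of Kiwi's argument.

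Your treatment of (d) is in the right spirit---the wandering-polygon theorem is indeed proved by a careful arc-length bookkeeping that balances the expansion of $\si_d$ against the finite total ``critical budget''---but your reduction of (a) to (d) is not quite right. If $G$ is an infinite non-(pre)periodic gap and $V\subset G\cap\uc$ is a finite subset, then $\ch(V)$ is not a gap of $\lam$, and part (d) as stated concerns gaps of $\lam$, not arbitrary inscribed polygons. Kiwi's actual argument for (a) is different and more direct: it shows that a wandering infinite gap would force infinitely many disjoint long arcs on $\uc$, which is impossible.

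More seriously, your analysis of (b)--(c) contains an incorrect assertion. You write that ``the number of such orbits is exactly the degree of $g$ regarded as a covering of its image.'' But the first return map $g$ on a finite periodic polygon is a \emph{bijection} of the vertex set (degree $1$ as a covering of $\Bd(G)$); it acts as a combinatorial rotation by some $p/q$, and the number of $g$-cycles is $|G\cap\uc|/q$, which has nothing a priori to do with a covering degree. The actual bound of $d-1$ cycles comes from a different mechanism: one shows that each additional cycle of vertices forces a critical object of $\lam$ into a specific complementary region of the orbit of $G$, and there are only $d-1$ such critical objects available. The fixed-return $d$-gon arises as the unique configuration saturating a slightly different count. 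So the decomposition you propose is reasonable, but the core step linking the cycle count to $d-1$ needs to be redone along these lines rather than via a covering-degree identity.
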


Finally, here is an important claim.

\begin{corollary}[Corollary 4.8 \cite{bostv1}] \label{no-diagonal}
If $E$ is a preperiodic polygon of a symmetric lamination such that $E$ is
not precritical, then no diagonal of $E$ can be a leaf of a symmetric
lamination.
\end{corollary}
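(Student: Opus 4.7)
The plan is to argue by contradiction: assume some diagonal $d$ of $E$ is a leaf of a symmetric lamination $\lam'$ and produce two crossing leaves of $\lam'$. A preliminary reduction uses that $E$ is not precritical: on every forward image $\si_3^i(E)$, the map $\si_3$ acts injectively on the vertices and preserves cyclic order, so $\si_3$ sends $d$ to a diagonal of $\si_3^i(E)$. Replacing $E$ by its eventual periodic image $G$ (of period $p$) and $d$ by the corresponding diagonal of $G$, I may assume $E=G$ is periodic. Put $g=\si_3^p$, and let $n\ge 4$ be the number of vertices of $G$. By Lemma \ref{fn-gaps}, $g$ permutes the vertices of $G$ as a rotation in the cyclic order, either transitively (case (a)) or with two orbits (case (b), in which $G\neq -G$ and $\si_3^{p/2}(G)=-G$).

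In case (a), $g$ is rotation by some $r$ with $\gcd(r,n)=1$, so for $s$ with $sr\equiv 1 \pmod n$, $g^s$ acts as rotation by $1$. Writing $d=\ol{v_a v_b}$ with indices referring to cyclic order, the chord $g^s(d)=\ol{v_{a+1} v_{b+1}}$ has $v_{a+1}$ strictly between $v_a$ and $v_b$ on one arc of $\uc\setminus\{v_a,v_b\}$ and $v_{b+1}$ on the other, so it crosses $d$. Since $g^s(d)=\si_3^{sp}(d)$ is a forward $\si_3$-iterate of $d$, it lies in $\lam'$, contradicting (L1).

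Case (b) is more delicate: the $g$-orbit of $d$ in $G$ may be pairwise non-crossing, for example when its chords form the sides of a $g$-invariant sub-polygon inscribed in $G$. Here I exploit the symmetry of $\lam'$: $-d$ is also a leaf, and hence so is $d':=\si_3^{p/2}(-d)=-\si_3^{p/2}(d)$, which lies in $G$ because $\si_3^{p/2}(-G)=G$. Writing $r=2k'$ with $\gcd(k',n/2)=1$ and letting $t_0$ be the shift such that $d'=\ol{v_{a+t_0}v_{b+t_0}}$, the $g$-orbits of $d$ and $d'$ realise every shift of $d$ in the coset union $r\Z\cup(t_0+r\Z)\subset\Z/n\Z$. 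Since $\gcd(r,n)=2$, this union contains the shift $1$ (which produces a crossing by the case (a) argument) if and only if $t_0$ is odd. The main obstacle is therefore to verify that $t_0$ is odd. For this, Lemma \ref{fn-gaps}(b) supplies adjacent edges $\ell,\hell$ of $G$ and some $m$ with $\si_3^m(\ell)=-\hell$; orientation preservation forces $-\si_3^m$ to act on the vertices of $G$ as the rotation by $\pm 1$. Since $\si_3^m(G)=-G$, we have $m\equiv p/2\pmod p$, so writing $m=p/2+jp$ and composing with $g^{-j}$ (which commutes with negation) yields that $-\si_3^{p/2}=g^{-j}\circ(-\si_3^m)$ is the rotation by $t_0=\pm 1-2jk'$, which is indeed odd. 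This completes the contradiction.
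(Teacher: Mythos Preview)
The present paper does not contain a proof of this corollary; it is simply quoted from the companion paper \cite{bostv1}, so there is no in-paper argument to compare against. That said, your proof is correct and is a natural deduction from Lemma~\ref{fn-gaps}. The reduction to the periodic polygon $G$ is legitimate because ``not precritical'' makes each $\si_3$ a cyclic-order-preserving bijection on vertices, so diagonals map to diagonals and stay in $\lam'$ by forward invariance. In the $1$-rotational case the rotation-by-one trick is standard and immediately produces a crossing. In the $2$-rotational case your use of the \emph{symmetry} of $\lam'$ is the essential idea: the auxiliary leaf $d'=-\si_3^{p/2}(d)\in\lam'$ supplies the odd coset of shifts once you know $t_0$ is odd, and your computation $t_0=\pm 1-jr$ from the ``adjacent edge maps to minus the neighbouring edge'' clause of Lemma~\ref{fn-gaps}(b), together with $\si_3\circ(-)=(-)\circ\si_3$, is valid. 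One small point worth making explicit is that in case~(b) the period $p$ is even and $\si_3^{p/2}(G)=-G$: this follows from $\si_3^m(G)=-G$ for some $m$ (given by Lemma~\ref{fn-gaps}(b)) and $\si_3^{2m}(G)=G$, forcing $p\mid 2m$ but $p\nmid m$; you use this when writing $m=p/2+jp$.
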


Let us now describe laminations related to co-periodic comajors.

\begin{Lemma}\label{l:hyper1} Let $\lam$ be a symmetric lamination with a
periodic Fatou gap of degree greater than $1$. Then $\lam$ has two critical
Fatou gaps of degree greater than $1$. Moreover, $\lam$ is a q-lamination.
\end{Lemma}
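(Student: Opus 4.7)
The proof naturally splits into two parts matching the two assertions. For the existence of two critical Fatou gaps of degree greater than $1$, I begin by using the symmetry of $\lam$ to conclude that $-G$ is also a periodic Fatou gap with the same period and same degree $k>1$. The main task is then to show $G\ne -G$. Assuming the contrary, the total critical multiplicity $d-1=2$ of $\sigma_3$ on $\overline{\D}/\lam$, combined with the fact that each critical Fatou gap of degree $k$ contributes $k-1$ to this count, forces $k=3$ and that $G$ is the unique critical object of $\lam$. Then any three edges of $G$ forming a sibling collection must lie entirely on $\partial G$, and Lemma \ref{l:sml} restricts such sibling collections to types (sss) or (sml) (never (mmm) in the symmetric cubic setting). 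Applying the involution $\ell\mapsto -\ell$ to such a collection yields a second sibling collection also on $\partial G$; comparing lengths in type (sml) forces two edges of $\partial G$ to cross, and in type (sss) the lack of leaves of length at least $\tfrac{1}{6}$ together with convexity of $G$ forces $G$ to degenerate. Hence $G\ne -G$, giving two distinct critical Fatou gaps.

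For the $q$-lamination property, I first verify that $\lam$ is proper. A critical leaf of $\lam$ with periodic endpoint, or a critical wedge with periodic vertex, would force some forward image of a major leaf of $\lam$ to enter the interior of its own short strip, contradicting Proposition \ref{str-prop} (the closest-to-criticality leaf in its orbit does not enter its short strip). By Theorem \ref{t:nowander}, $\sim_\lam$ is then an invariant laminational equivalence. To conclude, I must show that $\sim_\lam$-classes are vertices of a single finite gap or leaf. By Kiwi's Theorem \ref{t:kiwi}, every infinite gap of $\lam$ is preperiodic with eventual image in the orbit of $G$ or $-G$. For each such infinite gap $G'$, I claim at most one edge of $G'$ is incident to any vertex $v$: were two edges of $G'$ to meet at $v$, iterating the first-return map on $\partial G'$ (of degree $>1$) would either produce infinitely many such triple meetings (contradicting the local finiteness of leaves at each point that already follows from properness) or a periodic finite polygon whose diagonal would need to be a leaf of $\lam$, contradicting Corollary \ref{no-diagonal}. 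Thus $\sim_\lam$-classes along infinite gaps are just endpoints of a single edge, while classes on finite gaps are, by definition, vertices of a single polygon.

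The main obstacle is showing $G\ne -G$: the degenerate case $\lam^*=\uc$ with $\overline{\D}$ as the sole gap (corresponding to the main hyperbolic component of $\mathcal{M}_{3,s}$) satisfies the hypothesis of the lemma with a single self-symmetric invariant Fatou gap of degree $3$, and must be implicitly excluded by requiring $\lam$ to have at least one non-degenerate leaf. Ruling out non-trivial self-symmetric configurations is where the combinatorial restrictions of Lemma \ref{l:sml} and the explicit structure of majors $\pm M, \pm M'$ in the symmetric cubic setting are decisive. In the $q$-lamination step, the delicate point is showing that at each vertex of an infinite Fatou gap at most one edge meets, which rests on the interplay between Kiwi's theorem, the expanding dynamics of degree $>1$ on $\partial G$, and Corollary \ref{no-diagonal} concerning diagonals of preperiodic polygons.
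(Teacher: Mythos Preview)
Your overall strategy diverges from the paper's, and the divergence introduces real gaps.

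For the first assertion, the paper simply says ``because of the symmetry'' there are two critical Fatou gaps of degree greater than $1$; it does not attempt a combinatorial argument ruling out $G=-G$. Your observation that the empty lamination (the whole disk as a single degree-$3$ gap) is a genuine edge case is fair, but your sibling-collection argument to exclude nontrivial self-symmetric $G$ is not convincing: you have not explained why an (sml) collection on $\partial G$ together with its image under $\tau$ must produce crossing edges, nor why in the (sss) case $G$ must degenerate. This part is over-engineered and not clearly correct.

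The more serious issue is in the $q$-lamination part. First, your properness argument via Proposition~\ref{str-prop} is opaque: you do not explain why a critical leaf with periodic endpoint forces a major into its own short strip. In fact properness is much easier here: once you know there are two critical Fatou gaps each of degree $2$, all of the criticality of $\sigma_3$ is absorbed by those gaps, so $\lam$ has \emph{no} critical leaves at all and is trivially proper. Second, and more importantly, your direct verification that $\sim_\lam$-classes are vertex sets of single finite gaps is not rigorous. The claim that ``local finiteness of leaves at each point already follows from properness'' is not something established in the paper, and your dichotomy (``infinitely many triple meetings'' versus ``a periodic polygon whose diagonal must be a leaf'') is asserted rather than proved; it is not clear how two concatenated edges of an infinite gap produce a polygon with a \emph{diagonal} in $\lam$.

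The paper's route is cleaner and avoids these difficulties: it shows $\lam$ coincides with the $q$-lamination $\hlam$ generated by $\sim_\lam$. The key step you are missing is the use of Lemma~\ref{fn-gaps}: if edges of a periodic Fatou gap $U$ formed a finite concatenation, then in $\hlam$ this concatenation would close up into a finite periodic gap on which the first return map fixes an edge, contradicting the rotational structure of periodic polygons in symmetric laminations. Once the Fatou gaps of $\lam$ and $\hlam$ agree, any leaf of $\lam$ not in $\hlam$ would have to be a diagonal of a finite $\hlam$-gap, which Corollary~\ref{no-diagonal} forbids. You invoke Corollary~\ref{no-diagonal} but not in this decisive way, and you do not use Lemma~\ref{fn-gaps} at all.
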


\begin{proof}
Because of the symmetry, a hyperbolic symmetric lamination $\lam$ has two
critical Fatou gaps of degree greater than $1$. These gaps either belong to
the same cycle of Fatou gaps, or belong to two distinct cycles of Fatou gaps.
Moreover, by Theorem \ref{t:nowander} the equivalence $\sim_\lam$ is
laminational. We claim that $\lam$ coincides with the q-lamination $\hlam$
generated by $\sim_\lam$. We need to show that any leaf of $\lam$ is a leaf
of $\hlam$.

In general, edges of a Fatou gap $U$ may form a finite concatenation in which
case $U$ is not a gap of the corresponding q-lamination (by definition, in
the q-lamination we add one more leaf to the concatenation to make it into a
finite gap; this extra leaf will be an edge of a new, smaller Fatou gap
of the q-lamination). However this cannot happen in our case: if it did it
would yield a symmetric q-lamination with fixed return finite gaps
contradicting Lemma \ref{fn-gaps}. Hence the Fatou gaps of $\lam$ are gaps of
$\hlam$. Otherwise, if $\ell\in \lam$ is not a leaf of $\hlam$ then $\ell$
must be a diagonal of a finite gap $G$ of $\hlam$. However by Corollary
\ref{no-diagonal} this is impossible. Hence $\lam=\hlam$ is a q-lamination as
desired.
\end{proof}

Hyperbolic laminations are constructed in Theorem \ref{non-degen-cmajors}.

\begin{theorem}\label{non-degen-cmajors}
A preperiodic point $q\in \uc$ of preperiod $1$ and period $k$ is an endpoint
of a non-degenerate co-periodic comajor $c$ of period $k$ of a cubic symmetric lamination.
Take the short edges of $\pm Q_c$, and remove their
backward orbits from $\lam(c)$. Then the resulting lamination $\hlam(c)$ is a
hyperbolic q-lamination with comajor pair $\{c, -c\}$.
\end{theorem}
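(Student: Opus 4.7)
The plan is three-stage: first, produce the co-periodic comajor $c$ with $q$ as an endpoint; second, analyze the structure of $\lam(c)$ around its collapsing quadrilaterals $\pm Q_c$; third, verify that removing the backward orbits of the short edges of $\pm Q_c$ from $\lam(c)$ yields a hyperbolic q-lamination with comajor pair $\{c,-c\}$.

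For Stage 1, let $p = \sigma_3(q)$, periodic of period $k$. Of the three $\sigma_3$-preimages of $p$, one is the periodic point $\beta$ preceding $p$ in the cycle; without loss of generality, $\beta = q+1/3$. I seek $c = \overline{q r}$ whose long sibling is a periodic leaf. By Definition \ref{d:sibli} this forces $r = \gamma + 1/3$ for a periodic point $\gamma$ of period dividing $k$; the resulting major is $M = \overline{\beta\gamma}$, and the single condition $\gamma - \beta \in (1/6, 1/2)$ modulo $1$ delivers both $\|M\| \in (1/6,1/2)$ and $\|c\| < 1/6$. For $k \geq 2$, the fixed points of $\sigma_3^k$ are abundant in any arc of length $1/3$, so such $\gamma$ exists; I would choose one for which the pair $\{c,-c\}$ is legal in the sense of Definition \ref{d:legal}. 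Legality reduces to checking that the finite forward orbit of $c$ (and of $-c$) contains no crossing pairs and avoids the interior of $\sh(c)$; both follow from Proposition \ref{str-prop} applied to $M$ (which is periodic) together with the symmetry. The case $k = 1$ lies outside the assumption $|c| < 1/6$ of the pullback construction and is handled by Lemma \ref{l:16}: the laminations $\lam_1, \lam_2$ provide co-periodic comajors of length $1/6$ collectively covering all preperiod-$1$ period-$1$ points. By Lemma \ref{pull-back-lam1}, $c$ is the comajor of $\lam(c)$.

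For Stages 2 and 3, since $|c| < 1/6$ the convex hull $Q_c$ of $M_c \cup M'_c$ is a quadrilateral gap of $\lam(c)$ bounded by the long/medium majors $M_c, M'_c$ and two short critical edges $e_1, e_2$ with $\sigma_3(e_j) = \sigma_3(c)$; symmetrically for $-Q_c$. Because $c$ is co-periodic, $M_c$ is periodic of period $k$, so $Q_c$ is preperiodic. Define $\hlam(c)$ by removing from $\lam(c)$ the full $\sigma_3$-backward orbits of $\pm e_1, \pm e_2$. The pullback construction of $\lam(c)$ ensures that each of these edges is isolated in $\lam(c)^*$ (its outer side abuts a gap of $\lam(c)$), and isolation propagates under pullback, so $\hlam(c)$ is closed; invariance and symmetry are inherited. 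Removing $e_1, e_2$ merges $Q_c$ with the two gaps of $\lam(c)$ immediately outside $e_1, e_2$ into a single gap $U$ of $\hlam(c)$, whose boundary still contains $M_c, M'_c$; since $\sigma_3^k$ fixes $M_c$, the gap $U$ is periodic of period $k$, and it has degree $2$ because $M_c, M'_c$ both map to $\sigma_3(c)$. Symmetrically, $-U$ is a periodic Fatou gap of degree $2$. Thus $\hlam(c)$ is hyperbolic by Definition \ref{d:hyperb}, and by Lemma \ref{l:hyper1} is a q-lamination. Finally, $\{c,-c\}$ is the comajor pair of $\hlam(c)$ because $M_c, M'_c, -M_c, -M'_c$ persist as majors (edges of $\pm U$) with short siblings $c, -c$.

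I expect the main obstacle to lie in Stage 1: selecting $\gamma$ so that the concrete legality check succeeds. A subtler issue in Stage 3 is rigorously confirming the isolation of the pullbacks of $\pm e_1, \pm e_2$ in $\lam(c)^*$, which ensures the merging of gaps is clean rather than uncovering accumulation leaves; this reduces to Proposition \ref{str-prop} applied carefully along the forward orbit of $c$.
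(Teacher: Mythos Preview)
Your Stages 2 and 3 are broadly aligned with the paper, though your description of the gap $U$ is imprecise: removing the full backward orbits of $e_1,e_2$ merges not just $Q_c$ with two immediate neighbors but with countably many pullbacks of $Q_c$ attached in a chain, which is what actually makes $U$ infinite.

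The real gap is in Stage 1. You write ``I would choose one for which the pair $\{c,-c\}$ is legal'' and assert that legality ``follows from Proposition \ref{str-prop} applied to $M$ (which is periodic) together with the symmetry.'' But Proposition \ref{str-prop} is a statement about leaves of an existing symmetric lamination, not about an arbitrary chord $M=\ol{\beta\gamma}$ you have just drawn; it gives you nothing until $M$ is already known to live in some invariant lamination. In particular, condition (a) of Definition \ref{d:legal} --- that no two forward images of $c,-c$ cross --- is exactly what is at stake, and for a generic periodic $\gamma$ in the arc of length $1/3$ this will fail. You have no mechanism for selecting the correct $\gamma$, and you explicitly flag this yourself.

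The paper's Stage 1 supplies precisely this missing idea. It works inside the \emph{degenerate} pullback lamination $\lam(q)$, whose critical leaf $\ell=M_q=\ol{x_0p}$ has the $k$-periodic endpoint $p$. It then builds a concatenation $A$ of short $\si_3^k$-pullbacks of $\ell$ (the ``short pullback'' choice in the degenerate construction is essential here), shows via Proposition \ref{str-prop} that all leaves in $A$ stay short, and obtains the required periodic point $t$ as the limit of $A$. The chord $\ol{pt}$ is then automatically compatible with $\lam(q)$ because its forward images stay inside forward images of the infinite gap bounded by $A$; this is what delivers legality of the short sibling $c=\ol{qy}$. Without this construction (or something equivalent), your Stage 1 is only a statement of intent.
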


\begin{proof}
Let $\ell=M_q=\ol{x_0p}$ be the critical leaf with $\si_3(\ell)=\si_3(q)$ and
$k$-periodic endpoint $p$. Consider the pullback lamination $\lam(q)$. Let
$G$ be the central symmetric gap or leaf  of $\lam(q)$ located between $\ell$
and $-\ell$. Then $G$ contains the origin and has leaves $\pm M$ closest to
criticality. Clearly, the short siblings $\pm d$ of leaves $\pm M$ form a
legal pair. Hence if $\ell$ shares an endpoint with $M$, then, by Lemma
\ref{pull-back-lam1}, we can set $c=d$. Assume now that leaves $\pm \ell$ are
disjoint from $\pm M$.

If the orbits of $p$ and $-p$ are disjoint, let $n=k$. Otherwise $k=2n$ for
some $n$, $\si_3^n(p)=-p$ and $\si_3^n(-p)=p$. We will assume in the rest of
the proof that $k=n$, the case when $k=2n$ is similar. Consider the
strip $S$ between $M$ and 
$M'$. If $s=\ol{x_0x_1}$ is the short pullback of $\ell$ included in
$\lam(q)$ by the construction, then $\si_3^k(s)=\ol{x_0p}$. Hence there is
another leaf $\ol{x_1x_2}$ such that $\si_3^k(\ol{x_1x_2})=\ol{x_0x_1}$. The
leaf $\ol{x_1x_2}$ is short as if $\ol{x_1x_2}$ is long/medium, then its
$k$-th image $s$ is short and non-disjoint from the interior of its short
strips, contradicting Lemma \ref{str-prop}. Repeating this, we get a
concatenation $A$ of 
pullbacks of $\ell$ under powers of $\si_3^k$; $A$ consists of short leaves
of $\lam(q)$, begins with $\ell\cup \ol{x_0x_1}\cup \ol{x_1x_2}$, converges
to a point $t\in \uc$ of period $k$, and points $x_0, x_1, \dots$ belong to
the short circular arc $I$ that bounds $S$ and does \emph{not} contain $p$.
Since $t$ and $p$ belong to distinct circle arcs on the boundary of $S$, then
$t\ne p$.

Clearly, an infinite periodic  gap $U$ of $\lam(q)$ contains $A$ in its
boundary, and there is a gap $U'$ with the same image as $U$ that shares an
edge $\ell$ with $U$. Consider the chord $\ol{pt}$; it is periodic of period
$k$, and there is another chord $\ol{x_0t'}$ with the same image as
$\ol{pt}$.
The chord $\ol{pt}$ is compatible with $\lam(q)$ because by
construction its images stay inside images of $U$ and never cross leaves of
$\lam(q)$.
Moreover, the iterated images of $\ol{pt}$ do not cross as for this
to happen some leaves from the concatenation $A$ must cross, and
this is not the case. We claim that then $\ol{pt}$ never enters the strip
between itself and $\ol{x_0t'}$. Indeed, if it does then, by Lemma
\ref{str-prop} it will have to cross $\ell$, a contradiction. Likewise,
images of $\ol{pt}$ never cross $-\ell$. By definition this implies that the
short sibling $\ol{qy}$ of $\ol{pt}$, together with $-\ol{qy}$, forms a legal
pair. Thus, $\ol{qy}=c$ is a comajor of a symmetric lamination as desired.

The leaf $\si_3(c)=\si_3(\ol{pt})$ is an $k$-periodic leaf of $\lam(q)$.
By
Proposition \ref{str-prop}, the leaf $\si^k_3(c)=\ol{pt}$ is a major of $\lam(c)$. Let
$\bar x$ and $\bar y$ be the two short edges of $Q_c=Q$. Removing them and
their backward orbits from $\lam(c)$ yields the family of chords $\hlam$; we
claim that $\hlam$ is an invariant lamination, too. Indeed, by definition
$\lam(c)$ has two quadrilaterals $X$ and $Y$ attached to $Q$ at $\bar x$ and
$\bar y$, respectively. This implies that both $\bar x$ and $\bar y$ are
isolated in $\lam(c)$. So, $\hlam$ is obtained by removing a countable family
of isolated leaves from $\lam$; hence, $\hlam$ is closed. The other
properties of invariant laminations for $\hlam$ are immediate. E.g., we need
to verify that any non-critical chord of $\hlam$ can be included in a sibling
collection. The only problematic case is that of $c$ (or $\ol{pt}$, or
$\ol{x_0t'}$), however $c,$ $\ol{pt},$ and $\ol{x_0t'}$ themselves form a
sibling collection. Thus, $\hlam$ is an invariant lamination. Evidently,
$\hlam$ is symmetric.

Consider the gap $U$ of $\hlam$ with $U\supset Q$. Countably many pullbacks
of $Q$ are consecutively attached to one another and contained in $U$. Hence
$U$ is an infinite periodic gap that maps forward $2$-to-$1$, that is,
 $U$ is a Fatou gap of degree two. 
By definition, $\hlam$ is hyperbolic.
Moreover, by the construction $c$ remains a leaf of $\hlam$. Hence $\{c,
-c\}$ is the comajor pair of $\hlam$.
\end{proof}

We now consider preperiodic points of preperiod greater than 1 or periodic points
(by Lemma \ref{cmajor-end-points}, there are no non-degenerate
periodic comajors).
Recall that a \emph{dendrite} is a locally connected
continuum that contains no Jordan curves. A q-lamination with no infinite
gaps gives rise to a topological Julia set which is a dendrite; we call
such q-laminations \emph{dendritic} (see \cite{bopt14,bopt19}). We will also
need Theorem 2.19 from \cite{bostv1}. This theorem coincides with Lemma 2.31
of \cite{bopt20} except for two extra claims proven in \cite{bostv1}

\begin{theorem}[Lemma 2.31\cite{bopt20}, Theorem
2.19\cite{bostv1}]\label{t:gaps-1}{\,}\\
Let $G$ be an infinite $n$-periodic gap and $K=\Bd(G)$. Then $\si_d^n|_K$
is the composition of a covering map and a monotone map of $K$. If
$\si_d^n|_K$ is of degree one, then either statement {\rm (1)} or statement
{\rm (2)} below holds.

\begin{enumerate}
\item The gap $G$ has countably many vertices, finitely many of which are
    periodic and the rest are preperiodic. All non-periodic edges of $G$
    are $($pre$)$critical and isolated. There is a critical edge with a
    periodic endpoint among edges of gaps from the orbit of $G$.

\item The map $\si_d^n|_K$ is monotonically semi-conjugate to an irrational
    circle rotation so that each fiber is a finite concatenation of
    $($pre$)$critical edges of $G$. Thus, there are critical leaves (edges
    of some images of $G$) with non-preperiodic endpoints.
\end{enumerate}

In particular, if all critical sets of a lamination are non-degenerate finite
polygons then the lamination has no infinite gaps.
\end{theorem}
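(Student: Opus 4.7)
The plan is to address the three assertions of the theorem in turn. The factorization of $\si_d^n|_K$ as a covering composed with a monotone map follows by iterating gap invariance: a single application of $\si_d$ factors through a monotone collapse onto $\uc$ followed by a positively oriented covering, and the composition of $n$ such factorizations is of the same form since $\si_d^n(G)=G$. If the resulting composition has degree one, then the covering factor is an orientation‑preserving homeomorphism; after the monotone collapse $\pi:K\to\uc$ the induced map $\tilde f:\uc\to\uc$ is an orientation‑preserving homeomorphism with a well‑defined Poincar\'e rotation number $\rho\in\R/\Z$, and the stated dichotomy will mirror the classical dichotomy for $\rho$ rational versus irrational.

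The key preliminary step is to describe the non‑trivial fibers of $\pi$. Each such fiber is a closed arc of $K$ on which $\si_d^n$ is constant, hence a concatenation of edges of $G$ that collapse under $\si_d^n$, and such edges are $($pre$)$critical by definition. I would show every fiber is a \emph{finite} concatenation: an infinite one would produce infinitely many edges of $G$ whose forward $\si_d$‑orbits all enter the finite set of critical chords of $\si_d$, pulling back to infinitely many leaves sharing a common endpoint on $\uc$, which is impossible in a lamination. Given fiber finiteness, in the rational case $\rho=p/q$ the periodic points of $\tilde f$ lift through $\pi$ to either periodic vertices of $G$ or periodic fibers; together with Theorem \ref{t:kiwi} this forces only finitely many $\si_d$‑periodic vertices of $G$. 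Every other vertex is preperiodic, since monotonicity forces its forward orbit to accumulate on the periodic cycle and eventually enter a collapsing fiber. Non‑periodic edges of $G$ are then isolated and $($pre$)$critical; a critical edge with a periodic endpoint in the orbit of $G$ is produced by taking an edge of a non‑trivial periodic fiber (whose endpoints are periodic) and pushing forward to its critical image. In the irrational case, Poincar\'e's classification yields a monotone semiconjugacy of $\tilde f$ to the rotation by $\rho$; composing with $\pi$ gives the semiconjugacy of case $(2)$. Since irrational rotations have no periodic points, no fiber endpoint is preperiodic, so the corresponding $($pre$)$critical edges produce critical leaves with non‑preperiodic endpoints among edges of images of $G$.

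The main obstacle is establishing fiber finiteness and faithfully transporting the rotation number dichotomy to the combinatorial statement about vertices and edges of $G$. Finally, the ``In particular'' clause follows by contradiction: if every critical set of $\lam$ is a non‑degenerate finite polygon, then $\lam$ has no critical leaves. Every infinite gap is $($pre$)$periodic by Theorem \ref{t:kiwi}, and a periodic infinite gap $G$ cannot have first return map of degree greater than one (else $G$ would be an infinite critical set), so the dichotomy applies. But both $(1)$ and $(2)$ produce a critical leaf in the orbit of $G$, contradicting the hypothesis. Hence $\lam$ has no infinite gap.
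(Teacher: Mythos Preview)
The paper does not prove this theorem; it is quoted from \cite{bopt20} and \cite{bostv1} with no argument given, so there is no in-paper proof to compare against. Your overall architecture---factor $\si_d^n|_K$ as a monotone map followed by a covering, pass to the induced orientation-preserving circle homeomorphism $\tilde f$, and read off the dichotomy from whether the rotation number of $\tilde f$ is rational or irrational---is the standard one and is indeed how the result is established in the cited sources.

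Two steps in your sketch do not work as written. First, your fiber-finiteness argument claims that an infinite concatenation of $($pre$)$critical edges would yield ``infinitely many leaves sharing a common endpoint on $\uc$, which is impossible in a lamination.'' Neither clause is justified: a lamination \emph{can} have infinitely many leaves through a single point of $\uc$, and you have not explained why distinct preimages of the finitely many critical chords would share an endpoint at all. The clean argument uses the degree-one hypothesis directly: every edge in a fiber has a collapse time $j\in\{0,\dots,n-1\}$ at which it becomes a critical edge of $\si_d^j(G)$; each gap in the cycle carries only finitely many critical edges; and since each step has degree one, distinct not-yet-collapsed edges map to distinct edges. Hence the number of edges in any fiber is bounded by the total number of critical edges along the cycle.

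Second, in the rational case you assert that every non-periodic vertex is preperiodic because its orbit must ``eventually enter a collapsing fiber.'' Accumulation on the periodic set of a circle homeomorphism does not imply eventual landing: wandering points of $\tilde f$ are merely asymptotic to its periodic orbits and never reach them. One must argue separately that $G$ has no vertex projecting into a wandering interval of $\tilde f$; equivalently, that every non-periodic edge of $G$ is $($pre$)$critical and isolated, which is exactly the substantive content of clause~(1) and needs its own proof rather than being assumed. Your derivation of the ``in particular'' clause is correct.
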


Consider now the preperiodic case of preperiod greater than $1$.

\begin{Lemma}\label{l:misiu}
If $x\in \uc$ is preperiodic of preperiod $n>1$ then there exists a symmetric
dendritic q-lamination $\hlam$ with finite critical preperiodic sets $\pm G$
of preperiod $n$ and a 
 gap/leaf $T\ne \pm G$ of $\hlam$ with $\si_3(T)=\si_3(G)$ and
$x\in T$. Moreover,
\begin{enumerate}
  \item if $T$ is degenerate, then there are no non-degenerate
comajors containing $x$,
  \item if $T$ is a non-degenerate leaf, then $T$ is a comajor containing $x$, 
  \item if $T$ is a gap, then the edges of $T$ with endpoint $x$
are comajors containing $x$.
\end{enumerate}
The lamination $\hlam$ coincides with the family
of limit leaves of iterated pullbacks of critical leaves $\pm M_x$ of
$\lam(x)$. All edges of $T$ are comajors that are limits of comajors disjoint from $T$.
\end{Lemma}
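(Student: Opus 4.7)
The plan is to define $\hlam$ as the Hausdorff closure of the family $\Ca_x$ of iterated pullbacks of $\pm M_x$ inside $\lam(x)$, verify it is a symmetric dendritic q-lamination, and then read off $G$ and $T$. The key structural input is $n>1$: since $\si_3(x)$ is strictly preperiodic of preperiod $n-1\geq 1$, no preimage of $\si_3(x)$ can be periodic, so the endpoints of $\pm M_x$ are strictly preperiodic of preperiod $n$. This places the construction of $\lam(x)$ in the unambiguous case (a) of the degenerate setup, and every iterated pullback of $\pm M_x$ is uniquely determined.

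By Theorem \ref{t:prelam-cl}, the closure $\hlam$ is a symmetric invariant lamination whose only critical leaves remain $\pm M_x$. Since these have strictly preperiodic endpoints, $\hlam$ is proper, so Theorem \ref{t:nowander} makes $\sim_{\hlam}$ a laminational equivalence. An infinite periodic gap of $\hlam$ would, by Theorem \ref{t:gaps-1}, require either a critical edge with a periodic endpoint or a critical leaf with a non-preperiodic endpoint somewhere in its orbit---both excluded by our setup. Hence $\hlam$ has no infinite gaps, and the argument of Lemma \ref{l:hyper1} together with Corollary \ref{no-diagonal} (ruling out diagonals of finite gaps) shows $\hlam$ is a dendritic q-lamination.

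Next I would take $G$ to be the $\hlam$-set containing $M_x$. Dendriticity makes $G$ finite, and two of its vertices (the endpoints of $M_x$) share a $\si_3$-image, so $G$ is critical. Since $\hlam$-sets are equivalence classes, $\si_3^n(G)$ is the (periodic) $\hlam$-set containing the periodic point $\si_3^n(v)$, where $v$ is an endpoint of $M_x$; combined with the lower bound on preperiod coming from $v$, this forces $G$ to have preperiod exactly $n$. The preimage $\si_3^{-1}(\si_3(G))$ splits into $G$, $-G$, and a third $\hlam$-set $T$; since $x$ is disjoint from $\pm M_x\subset \pm G$, we have $x\in T$. Cases (1)--(3) correspond to $T$ being a point, a non-degenerate leaf, or a polygon. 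For each edge $e$ of $T$ with endpoint $x$ (or $e=T$ in case (2)), $\{e,-e\}$ is short and its entire forward orbit lies in $\hlam$, so it is a legal pair; Lemma \ref{pull-back-lam1} then makes $e$ a comajor.

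Finally, to approximate each edge $e$ of $T$ on both sides by comajors disjoint from $T$, I would use that $e$ is a two-sided Hausdorff limit of leaves in $\Ca_x$---these leaves cannot share endpoints with $T$ because strict preperiodicity of $x$ forbids any non-trivial pullback of $\pm M_x$ from having $x$ as an endpoint---and then feed slightly perturbed degenerate arguments close to $x$ into the construction of Lemma \ref{pull-back-lam1} to produce the required comajors. The main obstacle is Step~2: ruling out alternative (2) of Theorem \ref{t:gaps-1} (irrational-rotation Siegel-like gaps) and confirming that the Hausdorff closure introduces no new critical sets beyond $\pm G$; both issues hinge on strict preperiodicity of the endpoints of $\pm M_x$, which is precisely the role of the hypothesis $n>1$.
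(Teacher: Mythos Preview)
Your architecture tracks the paper's (strict preperiodicity of the endpoints of $\pm M_x$ from $n>1$, properness via Theorem~\ref{t:nowander}, no infinite gaps via Theorem~\ref{t:gaps-1}), but two genuine gaps remain. First, the Hausdorff closure of $\Ca_x$ is not $\hlam$ but $\lam(x)$ itself: that is exactly how the pullback lamination is defined in the degenerate case. The lemma claims $\hlam$ equals the family of \emph{limit} leaves, and these differ in cases~(2) and~(3): when $T$ is a non-degenerate leaf or a gap, $\si_3(G)=\si_3(T)$ has at least two vertices, so $G$ is a collapsing quadrilateral or larger and $M_x$ is a \emph{diagonal} of $G$. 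Then $M_x$ and all its pullbacks are isolated leaves of $\lam(x)$ lying outside any q-lamination. Corollary~\ref{no-diagonal} cannot exclude them, since it applies only to polygons that are not precritical, and $G$ is critical. The paper avoids this by \emph{defining} $\hlam$ as the q-lamination generated by $\sim_{\lam(x)}$, showing $\lam(x)$ (hence $\hlam$) has no infinite gaps, and only afterward checking that $\hlam$ agrees with the set of limit leaves.

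Second, you give no argument for case~(1). That claim is a non-existence statement---no non-degenerate comajor passes through $x$---and it does not follow from $T$ being a point. The paper proves it by taking leaves or gap-edges of $\hlam$ close to $M_x$, iterating to their closest-to-criticality image $d$, and using Proposition~\ref{str-prop} to show that the short sibling $d''$ of $d$ is legal; such comajors $d''$ cut off arbitrarily short arcs around $x$, and then Theorem~\ref{t:oldmain} (the comajors form a q-lamination) forbids any non-degenerate comajor from containing $x$. For the final sentence of the lemma the paper simply invokes Lemma~\ref{pre-period-bigger-than-1}; your ``perturbed degenerate arguments'' sketch is not a substitute.
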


\begin{proof}
Set $\lam(x)=\lam$, $\ell=M_x$. We claim that $\lam$ has no infinite gaps.
Indeed, if $U$ is an infinite gap of $\lam$, then by Theorem \ref{t:kiwi} an
eventual image $V$ of $U$ is periodic. Moreover, no gap of the orbit of $V$
is critical as $\lam$ has two critical leaves $\pm \ell$ and hence no gap of
$\lam$ can map onto its image  $k$-to-$1$ with $k>1$. Thus, $V$ is periodic
of degree $1$. By Theorem \ref{t:gaps-1} we may assume that $V$ has a
critical edge with a periodic endpoint or with both non-preperiodic
endpoints. Since neither $\ell$ nor $-\ell$ is like that, then all gaps of
$\lam$ are finite.

By Theorem \ref{t:nowander} the equivalence relation $\sim_{\lam}$ is
laminational. Let $\hlam$ be the q-lamination generated by $\sim_{\lam}$. All
gaps of $\hlam$ are finite (if $\widehat W$ is an infinite gap of $\hlam$
then by the construction no leaf of $\lam$ can be inside $\widehat W$, and so
there is an infinite gap $W$ of $\lam$ containing $\widehat W$, a
contradiction).
Hence the topological Julia set
$J_{\sim_{\lam}}$ is a \emph{dendrite}, and there are no isolated leaves in
$\hlam$. Clearly, $\hlam$ is symmetric, with critical sets $G\supset \ell,$
$-G\supset -\ell$, and there is a $\hlam$-set $T$ with $\si_3(T)=\si_3(G)$.

In order to prove claims (1) --- (3) of the lemma, assume first that $T=\{x\}$ is a
singleton. Then $\hlam$ has critical leaves $\pm G=\pm \ell$. Suppose that
there is a sequence of $\hlam$-gaps $H_i$ that converges to $\ell$. By
Theorem \ref{nwt-thm} all of them are (pre)periodic. We may assume that
$H_1=H$ has an edge $c$ that separates the interior of $H$ from $\ell$, with
endpoints
close to the 
endpoints of $\ell$.
We may follow the orbit of $H$ and $c$ and choose the
closest to criticality iterated image $d$ of $c$ (it is always possible since the
orbit of $c$ is finite and $c$ never maps to $\pm \ell$).
By Proposition \ref{str-prop}, the leaf $d$ never enters its short strips. Hence the short sibling
$d''$ of $d$, together with $-d''$, forms a legal pair. Evidently, $d''$
separates a short circle arc containing $x$ from the rest of the circle.
Since by Theorem \ref{t:oldmain} comajors form a q-lamination, non-degenerate
comajors cannot contain $x$ as claimed.

If there are no gaps located close to $\ell$ then, since
$\si_3$-periodic points are dense in $\uc$, we can choose a sequence of
periodic leaves converging to $\ell$, and repeat for them the above argument.
So, the case when $T=\{x\}$ is a singleton is considered. If $T$ is a
leaf/gap, then it is easy to check that any leaf on the boundary of $T$ is
legal as desired.

Let us prove the next to the last claim of the lemma. Take a leaf of $\lam(c)$ which is
the limit of a sequence of pullbacks of $\pm M_x$. Each such pullback is
contained in a pullback of a critical set of $\hlam$. Hence their limit is
the limit of a sequence of leaves of $\hlam$, that is itself a leaf of
$\hlam$. On the other hand, by definition $\hlam\subset \lam(x)$. Hence if
there is a leaf $\ell\in \hlam$ which is not the limit of a sequence of
pullbacks of $\pm M_x$, then $\ell$ is a pullback of $\pm M_x$. We may assume
that, say, $M_x$ is a leaf of $\hlam$ but is not the limit of pullbacks of
$\pm M_x$. Then there must exist two gaps of $\hlam$ sharing $M_x$ as an edge
which is impossible for the dendritic lamination $\hlam$ in which these two
gaps will have to be merged into one.

The last claim of the lemma follows from the construction and the fact that all
leaves of $T$ are limits of comajors disjoint from them proven in Lemma 6.8 in
\cite{bostv1} and stated in this paper as Lemma \ref{pre-period-bigger-than-1}.
\end{proof}

\begin{definition}\label{d:misiu}
A preperiodic comajor $c$ of preperiod greater than $1$ or a periodic comajor
 (necessarily degenerate)
is called a \emph{Misiurewicz comajor}, and any symmetric lamination with a
Misiurewicz comajor pair is said to be a \emph{Misiurewicz} symmetric
lamination.
\end{definition}

We will need the following lemmas.

\begin{Lemma}[Lemma 6.7 \cite{bostv1}]\label{appr-over}
Let $c\in C_sCL$ be a non-de\-ge\-ne\-rate comajor such that $\si_3(c)$ is
not periodic. If there exists a sequence of leaves $c_i\in \lam(c)$ with
$c\prec c_i$ and $c_i\rightarrow c,$ then $c$ is the limit of co-periodic
comajors $\hc_j \in \lam$ with $c\prec \hc_j$ for all $j$.
\end{Lemma}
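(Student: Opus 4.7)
The plan is to construct co-periodic comajors approaching $c$ from above using density of $\si_3$-preperiodic points of preperiod $1$ together with Theorem \ref{non-degen-cmajors}, then verify convergence via the non-crossing structure of $C_sCL$. Write $c=\ol{ab}$ and $c_i=\ol{a_ib_i}$ with $a_i\to a$, $b_i\to b$, and $a_i,b_i\in\uc\sm\ol{H(c)}$ (since $c\prec c_i$).

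First, I would choose a sequence $q_j\to a$ of $\si_3$-preperiodic points of preperiod $1$ with $q_j\in\uc\sm\ol{H(c)}$; by Theorem \ref{non-degen-cmajors}, each is an endpoint of a non-degenerate co-periodic comajor $\hc_j$ of some symmetric lamination. Since $c$ is not co-periodic (as $\si_3(c)$ is not periodic), Theorem \ref{t:oldmain} implies $\hc_j$ and $c$ are disjoint. Combined with $q_j\in\uc\sm\ol{H(c)}$, this forces $c\prec\hc_j$; moreover $\|\hc_j\|\ge\|c\|>0$ since $R(\hc_j)\supset R(c)\ni b$, so no subsequence degenerates.

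To establish $\hc_j\to c$, I would pass to a Hausdorff-convergent subsequence $\hc_j\to\hc^*$. Then $\hc^*$ is a non-degenerate leaf of $C_sCL$ sharing the endpoint $a$ with $c$ and satisfying $c\preceq\hc^*$. When $c$ has non-preperiodic endpoints, Lemma \ref{cmajor-end-points}(1)(b) implies no other leaf of $C_sCL$ shares an endpoint with $c$, forcing $\hc^*=c$. When $c$ is preperiodic of preperiod $\ge 2$, Lemma \ref{l:misiu} identifies the possible leaves of $C_sCL$ at $a$: either $c$ alone (in which case $\hc^*=c$), or $c$ together with one adjacent edge $c'$ of a finite gap $T$ of $C_sCL$.

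The main obstacle is this last subcase, where $\hc^*$ could a priori equal $c'$. I would rule this out using the hypothesis sequence: for large $i$, $c_i\in\lam(c)$ lies strictly between $c$ and $c'$; if $\hc_j\to c'$, then for large $j$ the endpoint $q_j$ lies in $R(c_i)$ (since $q_j\to a$) while the other endpoint of $\hc_j$ approximates that of $c'$ and lies outside $R(c_i)$, forcing $\hc_j$ to cross $c_i$. To derive a contradiction, I would invoke an analog of Lemma \ref{cmajor-comp-d} for non-degenerate $c$: from $c\prec\hc_j$ one deduces that the majors of $\lam(\hc_j)$ lie in $\lam(c)$, so non-precritical leaves of $\lam(\hc_j)$ — in particular $\hc_j$ itself — are leaves of $\lam(c)$, contradicting the crossing with $c_i\in\lam(c)$. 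Thus $\hc^*=c$ in all cases, and the full sequence $\hc_j$ converges to $c$.
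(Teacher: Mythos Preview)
This lemma is not proved in the present paper; it is quoted from \cite{bostv1} (as Lemma 6.7 there) and used as a black box in the proof of Theorem \ref{dense-cmajors}. There is therefore no proof here to compare your attempt against.

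Evaluating your argument on its own merits: the decisive step in the preperiodic subcase rests on an ``analog of Lemma \ref{cmajor-comp-d} for non-degenerate $c$,'' which you use to place the co-periodic comajor $\hc_j$ inside $\lam(c)$ and thereby forbid a crossing with $c_i$. No such analog is stated or proved in this paper --- Lemma \ref{cmajor-comp-d} is formulated only for \emph{degenerate} $c$ --- and without it there is no contradiction: $\hc_j$ is a leaf of $C_sCL$ while $c_i$ is a leaf of $\lam(c)$, and leaves drawn from two different laminations may perfectly well cross. This is exactly the point at which the hypothesis sequence $c_i\in\lam(c)$ must do real work, and as written that bridge is missing. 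A secondary issue arises already in the non-preperiodic case: you cite Lemma \ref{cmajor-end-points}(1)(b) to conclude that no leaf of $C_sCL$ shares an endpoint with $c$, but that lemma concerns leaves of a symmetric lamination $\lam$ having $c$ as comajor, not leaves of the parameter lamination $C_sCL$; the inference you want is the ``stand-alone leaf'' statement in the full Corollary 6.7 of \cite{bostv1}, which is stronger than the version recorded here as Corollary \ref{not-preperiodic}.
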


\begin{Lemma}[Lemma 6.2 \cite{bostv1}]\label{appr-under}
Let $c\in C_sCL$ be a non-de\-ge\-ne\-rate comajor. If $\ell\in \lam(c),$
$\ell\prec c$ and $\|\ell\|>\frac{\|c\|}{3}$, then $\ell\in C_sCL$. In
particular, if $c_i\in \lam(c),$ $c_i\prec c$ and $c_i \rightarrow c$, then
$c_n\in C_sCL$ for sufficiently large $n$.
\end{Lemma}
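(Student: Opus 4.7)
The plan is to establish that $\{\ell,-\ell\}$ is a \emph{legal} symmetric pair in the sense of Definition \ref{d:legal}; then Lemma \ref{pull-back-lam1} exhibits $\ell$ as the comajor of the pullback lamination $\lam(\ell)$, placing $\ell$ in $C_sCL$ by Definition \ref{d:cscl}. Since $\ell\prec c$, we have $\ell\subset R(c)$ and $\|\ell\|\le\|c\|$, and by Lemma \ref{l:16} we may assume $\|c\|<\frac{1}{6}$, so $\ell$ is short. By symmetry $-\ell\in\lam(c)$. Let $M_\ell,M'_\ell$ be the medium and long siblings of $\ell$ from Definition \ref{d:sibli} and write $\sh(\ell):=\sh(M_\ell)=C(M_\ell)\cup(-C(M_\ell))$. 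Condition (a) of Definition \ref{d:legal} is immediate: all iterated forward images of $\pm\ell$ are leaves of $\lam(c)$, hence pairwise non-crossing.

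The real content is condition (b): no iterated forward image $\eta_k=\si_3^k(\ell)$ with $k\ge 1$ meets the interior of $\sh(\ell)$; by symmetry it suffices to treat $C(M_\ell)$. The chords $M_\ell,M'_\ell$ are generally \emph{not} in $\lam(c)$---they cross the short sibling edges of the critical quadrilateral $Q_c$---but by $\si_3$-invariance the parallel $(sss)$-siblings $\ell_1=\ol{(a+\tfrac{1}{3})(b+\tfrac{1}{3})}$ and $\ell_2=\ol{(a+\tfrac{2}{3})(b+\tfrac{2}{3})}$ (writing $\ell=\ol{ab}$) are leaves of $\lam(c)$. This gives the decomposition
\[
C(M_\ell)=Q_\ell\cup R(\ell_1)\cup R(\ell_2),\qquad Q_\ell=\ch(M_\ell\cup M'_\ell),
\]
so a hypothetical $\eta_k$ meeting the interior of $C(M_\ell)$ must enter one of these three pieces. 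If $\eta_k$ enters the interior of $R(\ell_i)$, it cannot cross $\ell_i\in\lam(c)$, so $\eta_k\subset R(\ell_i)$ and $\|\eta_k\|<\|\ell\|$, contradicting Lemma \ref{short-leaves}, whose floor $\min(\|\ell\|,3\|c\|)$ equals $\|\ell\|$ in our range. The remaining possibility is that $\eta_k$ enters the interior of $Q_\ell$; since $\eta_k$ cannot cross $\ell_1$ or $\ell_2$, it must cross $M_\ell$ or $M'_\ell$. This is where the hypothesis $\|\ell\|>\|c\|/3$, equivalently $\|\si_3(\ell)\|>\|c\|$, is used: pushing such a crossing forward by $\si_3$---which collapses both $M_\ell$ and $M'_\ell$ onto $\si_3(\ell)$---forces $\eta_{k+1}\in\lam(c)$ to have endpoints on $H(\si_3(c))$ straddling those of $\si_3(\ell)$ in a way that either yields a genuine crossing of leaves of $\lam(c)$ or squeezes one endpoint into a sub-arc of length strictly less than $\|\ell\|$, below the Lemma \ref{short-leaves} floor; iterating along the orbit, with Proposition \ref{str-prop} controlling the medium/long side, produces the contradiction.

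Having verified (a) and (b), Lemma \ref{pull-back-lam1} gives $\ell\in C_sCL$. The ``in particular'' clause then follows because $c_n\to c$ implies $\|c_n\|\to\|c\|$, so $\|c_n\|>\|c\|/3$ for all sufficiently large $n$, and the first assertion applies to each such $c_n$. The main obstacle is the $Q_\ell$ case above: ruling out a forward image of $\ell$ that crosses one of the medium/long siblings $M_\ell,M'_\ell$---chords that are not themselves leaves of $\lam(c)$---is precisely where the length threshold $\|c\|/3$ is indispensable.
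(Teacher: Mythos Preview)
The paper does not give its own proof of this lemma---it is quoted verbatim from the companion paper \cite{bostv1}---so there is no proof here to compare against directly. That said, your approach via legality (Definition~\ref{d:legal}) is the natural one, and your verification of condition~(a) together with the decomposition $C(M_\ell)=Q_\ell\cup R(\ell_1)\cup R(\ell_2)$ is correct; in particular, the (sss) siblings $\ell_1,\ell_2$ really are leaves of $\lam(c)$, since the (sml) siblings $M_\ell,M'_\ell$ cross the short edges of $Q_c$ and are therefore excluded from $\lam(c)$.

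The proof is nonetheless incomplete: the $Q_\ell$ case is not actually handled. Concretely, nothing you have written rules out a forward image $\eta_k\in\lam(c)$ lying under a short edge $s_i$ of $Q_c$ but strictly \emph{above} $\ell_i$, i.e., with $\ell_i\prec\eta_k\preceq s_i$. Such a leaf has length in the interval $(\|\ell\|,\|c\|]$, so Lemma~\ref{short-leaves} does not exclude it, and because its endpoints straddle $H(\ell_i)$ inside $H(s_i)$ it genuinely crosses both $M_\ell$ and $M'_\ell$ and hence enters the interior of $\sh(\ell)$. Your one-sentence treatment---``pushing such a crossing forward by $\si_3$\dots\ iterating along the orbit, with Proposition~\ref{str-prop} controlling the medium/long side, produces the contradiction''---is not an argument: you have not identified which leaf of $\lam(c)$ is supposedly crossed by $\eta_{k+1}$, nor exhibited an iterate whose length drops below $\|\ell\|$, nor explained how Proposition~\ref{str-prop} (which concerns medium/long leaves of a lamination, while $M_\ell,M'_\ell$ are not in $\lam(c)$) is to be invoked. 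You yourself flag this as ``the main obstacle,'' and it is; closing it requires a genuine dynamical argument tracking where the preimage $\eta_{k-1}$ can sit relative to the majors of $\lam(c)$, not the gesture toward one that you have given.
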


We are ready to prove the density of hyperbolicity (Fatou conjecture) for
symmetric laminations.

\begin{theorem}\label{dense-cmajors}
Co-periodic comajors are dense in $C_sCL$.
\end{theorem}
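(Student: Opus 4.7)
The plan is to show every comajor $c \in C_sCL$ lies in the closure of the set of co-periodic comajors, arguing by case analysis on the nature of $c$. The case $c$ already co-periodic is trivial.

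First I would handle the non-degenerate non-co-periodic case. If $c$ is non-degenerate and not co-periodic, then $\si_3(c)$ is not periodic, and by Lemma \ref{cmajor-end-points}(1) the endpoints of $c$ are either both non-preperiodic or both preperiodic with preperiod at least $2$. In the non-preperiodic subcase, Lemma \ref{cmajor-end-points}(1)(b) directly provides an approximating sequence of leaves $c_i \in \lam(c)$ with $c \prec c_i$ and $c_i \to c$. In the Misiurewicz subcase, I would apply Lemma \ref{l:misiu} at a preperiodic endpoint $x$ of $c$: its concluding sentence gives a sequence of comajors, disjoint from the preperiodic gap or leaf $T$ having $c$ as an edge, converging to $c$; these comajors lie in the dendritic q-lamination $\hlam \subseteq \lam(x)$, and a compatibility check in the spirit of Lemma \ref{cmajor-comp-d} identifies them with leaves of $\lam(c)$ approaching $c$ from above. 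Having produced the required $c_i \in \lam(c)$ with $c \prec c_i \to c$, Lemma \ref{appr-over} furnishes co-periodic comajors converging to $c$.

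The degenerate case $c = \{x\}$ I would reduce to the previous case via a diagonalization: if non-degenerate comajors $c^{(k)} \to \{x\}$ as chords, then each $c^{(k)}$ is either itself co-periodic or (by the previous case) a limit of co-periodic comajors, and a diagonal subsequence yields co-periodic comajors converging to $\{x\}$. To exhibit the $c^{(k)}$, I would use that preperiodic points of preperiod $1$ are dense in $\uc$ (topological exactness of $\si_3$) and that, by Theorem \ref{non-degen-cmajors}, each such point is an endpoint of a non-degenerate co-periodic comajor; the task is to select these so that both endpoints tend to $x$ — equivalently, so that the lengths tend to $0$.

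The hard part will be this length control in the degenerate case: producing non-degenerate comajors of arbitrarily small length with endpoints near $x$. I plan to circumvent an explicit length estimate by invoking Lemma \ref{l:misiu} directly at the point $x$ itself when $x$ is Misiurewicz or non-preperiodic, obtaining non-degenerate leaves of the resulting dendritic q-lamination converging to $\{x\}$ from both sides (the periodic and preperiod-$1$ cases of $x$ being absorbed into the co-periodic or non-degenerate cases already treated). These approximating leaves are comajors by the final assertion of Lemma \ref{l:misiu}, so feeding them into the non-degenerate case closes the argument via diagonalization.
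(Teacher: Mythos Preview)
There is a genuine gap in your Misiurewicz subcase. To invoke Lemma~\ref{appr-over} you need leaves $c_i\in\lam(c)$ with $c\prec c_i\to c$, but what Lemma~\ref{l:misiu} (applied at an endpoint $x$ of $c$) produces is a dendritic q-lamination $\hlam\subset\lam(x)$, and its final sentence (via Lemma~\ref{pre-period-bigger-than-1}) yields approximating comajors lying in $C_sCL$; neither assertion places anything in $\lam(c)$. Your ``compatibility check in the spirit of Lemma~\ref{cmajor-comp-d}'' points the wrong way: that lemma transfers leaves from the non-degenerate pullback lamination $\lam(d)$ into the degenerate $\lam(c)$, whereas you would need the reverse passage from $\lam(x)$ into $\lam(c)$. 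There is also no guarantee the approximation is from above: when $c$ is an edge of a finite gap $H$ of $C_sCL$, the comajors furnished by Lemma~\ref{pre-period-bigger-than-1} lie on the side of $c$ opposite $H$, which may well be the \emph{below} side, and then Lemma~\ref{appr-over} is inapplicable.

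The paper avoids both problems by splitting the non-degenerate case differently, via a dichotomy intrinsic to $\lam(c)$: either (a) there already exist $c_i\in\lam(c)$ with $c\prec c_i\to c$, and Lemma~\ref{appr-over} finishes; or (b) no such sequence exists, which forces $c$ to be an edge of a gap $G$ of $\lam(c)$ with vertices outside $H(c)$, hence Misiurewicz. In case (b) the paper forms the dendritic q-lamination $\hlam$ associated with $\sim_{\lam(c)}$ --- so that $\hlam\subset\lam(c)$, not merely $\lam(x)$ --- and approximates $c$ \emph{from below} by uncountably many leaves $\hell\in\hlam\subset\lam(c)$; Lemma~\ref{appr-under} (the from-below companion you did not use) then certifies these $\hell$ as comajors. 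All but countably many such $\hell$ are non-preperiodic, hence fall under case (a), and a diagonal argument finishes. The ingredient your plan is missing is precisely this use of Lemma~\ref{appr-under} on leaves below $c$ that are known to lie in $\lam(c)$.

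As a side remark, the paper's own proof treats only non-degenerate $c$, so your degenerate-case discussion is extra work. It too has holes: Lemma~\ref{l:misiu} is stated only for $x$ preperiodic of preperiod greater than $1$, and a periodic or preperiod-$1$ point $x$ is itself a degenerate comajor, not ``absorbed'' into the co-periodic or non-degenerate cases.
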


\begin{proof}
Consider a non-degenerate comajor $c\in C_sCL$ that is not co-periodic.
We have two cases here.

(a) \textit{There is a sequence of leaves $c_i \in \lam(c)$ with $c\prec c_i$
and $c_i\rightarrow c$}.
Then, by Lemma \ref{appr-over}, the comajor $c$ is the limit of
co-periodic comajors $\hc_i$ such that $c\prec \hc_i$.

(b) \emph{A sequence of leaves $c_i\in \lam(c)$ converging to $c$ with
$c\prec c_i$ does not exist.} Then $c$ is an edge of a gap $G$ of $\lam(c)$
with all vertices of $G$ outside of $H(c)$. The lamination $\lam(c)$ has
critical quadrilaterals $\pm Q_c=\pm Q$. If $\si_3(c)$ eventually maps to an
edge of $Q$, then this edge is periodic which shows that $c$ is co-periodic,
a contradiction with our assumption. Hence $\si_3(c)$ never
maps to an edge of $Q$, and, therefore, $G$ never maps to a leaf or point. By
Theorem \ref{nwt-thm}, this implies that $G$ and $c$ are preperiodic of
preperiod greater than $1$ (recall that $c$ is not periodic by Lemma
\ref{cmajor-end-points}).

We claim that all edges of $G$ are comajors.
Properties of laminations imply that there are two gaps,
$L$ and $R$, attached to $Q_c$ at the appropriate majors of $\lam(c)$ and
such that $\si_3(L)=\si_3(R)=\si_3(G)$. Now, choose among the edges of $G$
the edge $\ell$ with the greatest length. Then, clearly, $G\cap \uc\subset
\ol{H(\ell)}$. Set $M=M_\ell, M'=M'_\ell$.
Then $M$ (or $M'$) cannot enter
the strip $S$ between $M$ and $M'$ as otherwise, by Proposition \ref{str-prop},
their images would have to cross edges of $L, R,$ or $Q_c$. This implies that
in fact any edge $d$ of $G$ is a comajor because $\{d, -d\}$ is legal.

It follows now that this is exactly the situation described in Lemma
\ref{l:misiu} and that $\lam(c)$ gives rise to a laminational equivalence
relation $\sim_{\lam(c)}$ which, in turn, gives rise to a dendritic
q-lamination $\hlam$ such that $G$ is a gap of $\hlam$ (the last claim
follows, e.g., from the fact that, by Theorem \ref{t:oldmain}, comajors form
a q-lamination). Since there are no isolated leaves in $\hlam$, the comajor
$c$ is approximated by uncountably many leaves $\hell$ of $\hlam$ such that $\hell\prec c$. By
Lemma \ref{appr-under}, we may assume that all these leaves of $\hlam$ are
comajors. Now, choose a sequence of them that converge to $c$ and satisfy the
conditions of case (a) of this proof. By (a) these leaves are all limits of
co-periodic comajors, hence so is $c$ as desired.
\end{proof}



\section{L-algorithm}

In this section, we provide an algorithm for constructing all co-periodic comajor leaves.
By Theorem \ref{dense-cmajors}, they are dense
in $C_sCL$, hence this renders the entire $C_sCL$. The algorithm is similar to the
famous \emph{Lavaurs} algorithm for Thurstons Quadratic Minor Lamination
$\qml$ \cite{lav86, lav89} (see \cite{sou21,bbs21} for an extension of this
algorithm to the degree $d$ unicritical case). We call it the \emph{L-algorithm}.

\subsection{Preliminaries} 

\begin{Lemma}[Lemma 6.1 \cite{bostv1}]\label{pre-period-1}
A co-periodic comajor leaf is disjoint from all other comajors in $C_sCL$.
\end{Lemma}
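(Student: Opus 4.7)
The plan is to reduce to a shared-endpoint situation and then force a contradiction from the Fatou-gap architecture of the hyperbolic q-lamination attached to $c$. Because $C_sCL$ is a lamination by Theorem \ref{t:oldmain}, two comajors cannot cross, so any failure of disjointness between the co-periodic comajor $c=\ol{qy}$ and another comajor $d$ has to occur at a shared endpoint, which I rename $q$; by Lemma \ref{cmajor-end-points}, $q$ is preperiodic of preperiod $1$ with period $k$, and, when $d$ is non-degenerate, its other endpoint is of the same type.

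The main input will be the hyperbolic q-lamination $\hlam(c)$ produced by Theorem \ref{non-degen-cmajors}. Chasing the construction identifies $c$ as a leaf of $\hlam(c)$ that is an edge of a Fatou gap $U^{\#}$ on the short-arc side of $c$: here $U^{\#}$ is the non-critical, degree-one preimage of the periodic critical degree-two Fatou gap $U$ containing the critical quadrilateral $Q_c$, and symmetrically $-c$ is an edge of $-U^{\#}$. Since $\hlam(c)$ is a q-lamination, at most two of its leaves meet at $q$, and I would verify that $c$ is the only such leaf; hence $d$ must lie in the closure of one of the two $\hlam(c)$-gaps incident to $q$, both of which have $c$ as edge. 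After a reflection, I may assume $d\subset U^{\#}$.

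The final step is to extract a contradiction from $d\subset U^{\#}$. Because $U^{\#}$ maps injectively under $\si_3$ onto $\si_3(U)$, every forward image $\si_3^j(d)$ with $j\ge 1$ lies in the periodic cycle of Fatou gaps $\si_3(U),\ldots,\si_3^{k-1}(U)$, and the first return map $\si_3^k|_{U}$ acts on $\Bd(U)\cap\uc$ as the angle-doubling map. Applying Proposition \ref{str-prop} to the majors $M_d,M_d'$ of $\lam(d)$ and tracking how $\|\si_3^{jk}(d)\|$ evolves under this doubling dynamics, the iterate $\si_3^{jk}(d)$ eventually becomes long enough to enter the interior of $\sh(d)$, contradicting condition (b) of Definition \ref{d:legal} and, with it, the assumption that $d$ is a comajor.

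The hard step I expect is the quantitative length-tracking in the last paragraph: one must compare the doubling-map orbit of $d$ inside $U$ against the width $\bigl|\tfrac{1}{3}-\|d\|\bigr|$ of the strip $\sh(d)$. If that estimate proves fragile, I would instead apply Lemma \ref{l:misiu} to place $d$ inside a symmetric dendritic q-lamination generated by $\lam(d)$; the interaction of this q-lamination with the gap structure of $\hlam(c)$ would then force a preperiodic finite gap of $\lam(d)$ that contains a major of $\hlam(c)$ as a diagonal, contradicting Corollary \ref{no-diagonal} and finishing the proof.
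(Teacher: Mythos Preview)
The paper does not prove Lemma~\ref{pre-period-1}; it is quoted verbatim as Lemma~6.1 of \cite{bostv1} (and its content is already subsumed in Theorem~\ref{t:oldmain}, which the paper also imports without proof). So there is no argument in the present paper to compare your proposal against.

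As for the proposal itself, it is a plausible outline but not a proof, and it has two structural weaknesses you should be aware of. First, there is a circularity risk: you invoke Theorem~\ref{t:oldmain} (that $C_sCL$ is a q-lamination) to reduce to the shared-endpoint case, but in \cite{bostv1} Lemma~6.1 is an ingredient in the proof that $C_sCL$ is a q-lamination, so in the original logical order you cannot use Theorem~\ref{t:oldmain} here. Second, the step ``$d\subset U^{\#}$'' is not justified: $d$ is a comajor of some \emph{other} symmetric lamination and need not be a leaf of $\hlam(c)$, so nothing you have said prevents $d$ from crossing an edge of $U^{\#}$. You would need an auxiliary compatibility statement (in the spirit of Lemma~\ref{cmajor-comp-d}) placing $d$ inside $\hlam(c)$ before you can trap it in a Fatou gap. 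The final ``length-tracking'' paragraph and the fallback via Lemma~\ref{l:misiu} and Corollary~\ref{no-diagonal} are both too vague to assess; in particular, Lemma~\ref{l:misiu} applies to preperiod $>1$, whereas your $q$ has preperiod~$1$, so that route would need a different entry point.
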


The following is Definition 6.4 from \cite{bostv1}.

\begin{definition}\label{d:symmetr}
Let $\ell$ be a leaf of a symmetric lamination $\lam$ and $k>0$ be such that
$\sigma_3^k(\ell)\neq\ell$ (in particular, the leaf $\ell$ is not a
diameter). If the leaf $\sigma_3^k(\ell)$ is under $\ell$, then we say that
the leaf $\ell$ \emph{moves in} by   $\sigma_3^k$; if $\sigma_3^k(\ell)$ is
not under $\ell$, then we say that the leaf $\ell$ \emph{moves out} by
$\sigma_3^k$. If two leaves $\ell$ and $\hell$ with $\ell \prec \hell$ of the
same lamination both move in or both move out by the map $\sigma_3^k$, then
we say that the leaves
 \emph{move in the same direction}. If one  of the leaves
$\ell$, $\hell$ moves in and the other moves out, then we say that the
leaves \emph{move in the  opposite directions}. There are two ways of moving
in the opposite directions: if  $\ell$ moves out and $\hell$ moves in, we say
they \textit{move towards each other}; if $\ell$ moves in and $\hell$ moves
out, we say that they  \textit{move away from each other}.
\end{definition}

The strip $\mathcal{S}(\ell, \hell)$ between non-crossing chords $\ell$,
 $\hell$ was introduced in Definition \ref{d:strip}.

\begin{Lemma}[Lemma 6.5 \cite{bostv1}]\label{move-twrds}
Let $\hell\neq\ell$ be non-periodic leaves of a symmetric lamination $\lam$
with $\hell\succ\ell$. Given an integer $k>0$,  let $h:\uc\to \uc$ be either
the map $\si_3^k$ or the map $-\si_3^k$. Suppose that the leaves $\ell$ and
$\hell$ move towards each other by the map $h$ and neither the leaves $\ell$
and $\hell$, nor any leaf separating them, can eventually map into a leaf
(including degenerate) with both endpoints in one of the boundary arcs of the
strip $\mathcal{S}(\ell,\hell)$. Then there exists a $\si_3$-periodic leaf
$y\in\lam$ that separates $\ell$ and $\hell$.
\end{Lemma}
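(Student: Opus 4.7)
My strategy is an intermediate value argument on the $\prec$-order restricted to leaves of $\lam$ that separate $\ell$ from $\hell$. The argument works uniformly for $h=\sigma_3^k$ and $h=-\sigma_3^k$, using symmetry of $\lam$ to ensure $h$ maps $\lam$ into $\lam$ in either case. Let $\Lambda^{\mathrm{sep}}$ denote the set of leaves of $\lam$ having one endpoint on each of the two boundary arcs $I_1,I_2$ of the strip $\overline{\mathcal{S}(\ell,\hell)}$, together with $\ell$ and $\hell$ themselves. By the non-crossing property, $\prec$ is a total order on $\Lambda^{\mathrm{sep}}$, which is compact in the Hausdorff topology. The lemma's hypothesis enters at this step crucially: it prevents any iterate of a separating leaf (or of $\ell,\hell$) from becoming a chord with both endpoints on a single boundary arc, so $h(y)$ for $y\in\Lambda^{\mathrm{sep}}$ is always $\prec$-comparable to $y$ (lying in $\Lambda^{\mathrm{sep}}$, strictly below $\ell$, or strictly above $\hell$).

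Set $E=\{y\in\Lambda^{\mathrm{sep}}:h(y)\succeq y\}$. The ``moves towards each other'' hypothesis yields $\ell\in E$ and $\hell\notin E$, while continuity of the linear extension $\sigma_3^*$ makes $E$ closed. Let $y^*=\sup_\prec E$. Closedness gives $y^*\in E$, so $h(y^*)\succeq y^*$. In the generic case, $y^*$ is a Hausdorff limit from above of leaves $z_n\in\Lambda^{\mathrm{sep}}\setminus E$ with $h(z_n)\prec z_n$; passing to the limit yields $h(y^*)\preceq y^*$, and combining inequalities gives $h(y^*)=y^*$. Thus $y^*$ is $h$-fixed, making $y^*$ a $\sigma_3$-periodic leaf of period dividing $k$ if $h=\sigma_3^k$ or dividing $2k$ if $h=-\sigma_3^k$. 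Non-degeneracy of $y^*$ is automatic: a Hausdorff limit of chords with endpoints on the two disjoint arcs $I_1$ and $I_2$ cannot collapse to a single point. Moreover $y^*\ne\hell$ since $\hell\notin E$, and $y^*\ne\ell$ by the same limit argument together with the hypothesis that $\ell$ is non-periodic, which forces $h(\ell)\ne\ell$.

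The technical wrinkle is the gap case: when $y^*$ is not Hausdorff-approachable from above within $\Lambda^{\mathrm{sep}}$. Letting $z^*$ be the next element of $\Lambda^{\mathrm{sep}}$ above $y^*$, one has $h(y^*)\succeq y^*$ and $h(z^*)\prec z^*$, with no separating leaf strictly between $y^*$ and $z^*$. A case analysis on the gap (or non-separating region) of $\lam$ trapped between $y^*$ and $z^*$, using gap-invariance together with the no-boundary-arc hypothesis, forces $h(y^*),h(z^*)\in\{y^*,z^*\}$: either $h$ fixes one of them, or $h$ swaps them (making each $h^2$-fixed). Either subcase delivers a $\sigma_3$-periodic separating leaf of $\lam$. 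The main obstacle is precisely this gap case, where the rigidity provided by gap-invariance must be combined carefully with the hypothesis on iterates to rule out images that escape the strip in uncontrolled ways and to pin down an honest periodic separating leaf.
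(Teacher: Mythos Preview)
The paper does not prove this lemma here; it is quoted from the companion paper \cite{bostv1}. However, the paper \emph{does} prove the closely analogous Lemma~\ref{moving-away-lem1} (leaves moving \emph{away} from each other), and that proof uses precisely the intermediate-value/supremum technique you propose: define a monotone family of separating leaves, take the extremal element of its closure, and show it is fixed, treating the gap case via gap-invariance. So your approach is essentially the paper's.

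There is, however, a genuine technical slip in your definition of $E$. You set $E=\{y\in\Lambda^{\mathrm{sep}}:h(y)\succeq y\}$ and claim $\ell\in E$. But ``$\ell$ moves out'' only says $h(\ell)\not\prec\ell$, which is \emph{weaker} than $h(\ell)\succeq\ell$. Nothing in the hypothesis prevents $h(\ell)$ from being a short leaf with both endpoints in the long arc $\uc\setminus\overline{H(\hell)}$; in that case $h(\ell)$ and $\ell$ are $\prec$-incomparable and $\ell\notin E$. The hypothesis only forbids both endpoints of an iterate from lying in a \emph{boundary arc of the strip}, not from lying in the complementary long arc of $\hell$. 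So your assertion that the hypothesis forces $\prec$-comparability of $h(y)$ with $y$ is not correct as stated.

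The fix is easy and is exactly what the paper does in the analogous Lemma~\ref{moving-away-lem1}: work with the condition ``moves in'' rather than with $\succeq$. Concretely, let $A$ be the set of $y\in\Lambda^{\mathrm{sep}}$ such that $y$ moves in and every leaf of $\Lambda^{\mathrm{sep}}$ separating $y$ from $\hell$ also moves in; then $\hell\in A$, $\ell\notin A$, and the $\prec$-infimum of $\overline{A}$ is the desired fixed leaf. Your gap-case sketch (disjointness of gap interiors forces $h(G)=G$, whence $h$ permutes the separating edges $y^*,z^*$) is correct in spirit and matches the paper's treatment.
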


For the notion of \emph{two-sided limit leaves}, see Definition \ref{d:2sided}.

\begin{corollary}[Corollary 6.7 \cite{bostv1}]\label{not-preperiodic}
Every not eventually periodic comajor $c$ is a two sided limit leaf in the
Cubic Symmetric Comajor Lamination $C_sCL$.
\end{corollary}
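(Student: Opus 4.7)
My plan is to read off the statement directly from the previous structural lemmas about how $c$ is approached inside its own pullback lamination $\lam(c)$, and then upgrade those approximators to actual comajors of $C_sCL$.

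First I would invoke Lemma \ref{cmajor-end-points}(1) applied to the non-degenerate comajor $c$ in the lamination $\lam(c)$. Since $c$ is not eventually periodic, its endpoints are not preperiodic, so alternative (a) of that lemma is ruled out and we must be in case (b): $c$ is approximated from both sides by leaves of $\lam(c)$ that share no endpoint with $c$. Fix sequences $\ell_i^-, \ell_i^+ \in \lam(c)$ with $\ell_i^- \prec c \prec \ell_i^+$ and $\ell_i^\pm \to c$.

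Next I would handle the two sides separately. For the inner sequence $\ell_i^- \prec c$, since $\ell_i^- \to c$ we have $\|\ell_i^-\| \to \|c\|$, so in particular $\|\ell_i^-\|>\|c\|/3$ for all sufficiently large $i$. Then Lemma \ref{appr-under} applies and guarantees $\ell_i^- \in C_sCL$ for all sufficiently large $i$. This gives a sequence of comajors of $C_sCL$ lying under $c$ and tending to $c$. For the outer sequence $c \prec \ell_i^+$, the hypothesis of Lemma \ref{appr-over} needs $\si_3(c)$ to be non-periodic; this is immediate because $c$ is not eventually periodic and so neither is $\si_3(c)$. Applying Lemma \ref{appr-over} then produces a sequence of co-periodic comajors $\hc_j \in C_sCL$ with $c \prec \hc_j$ and $\hc_j \to c$, which is the desired approximation from above.

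Combining the two approximations yields that $c$ is a two-sided limit leaf of $C_sCL$ in the sense of Definition \ref{d:2sided}, which is the corollary. There is no real obstacle here; the only things to check carefully are the hypotheses of the two auxiliary lemmas, namely the length condition $\|\ell_i^-\|>\|c\|/3$ (automatic from $\ell_i^-\to c$) and the non-periodicity of $\si_3(c)$ (automatic from the assumption on $c$). The substantive work has been done in \cite{bostv1} and is packaged into Lemmas \ref{cmajor-end-points}, \ref{appr-under}, and \ref{appr-over}.
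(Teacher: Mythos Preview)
Your argument is correct and is exactly the deduction one would expect: Lemma~\ref{cmajor-end-points}(1)(b) forces two-sided approximation of $c$ inside $\lam(c)$, and then Lemmas~\ref{appr-under} and~\ref{appr-over} upgrade the inner and outer approximants, respectively, to genuine comajors in $C_sCL$. The present paper does not give its own proof of this statement (it is quoted from \cite{bostv1}), but the numbering there---Corollary~6.7 immediately following the lemma recorded here as Lemma~\ref{appr-over}, together with Lemma~6.2 (your Lemma~\ref{appr-under})---indicates that this is precisely the intended derivation.
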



\begin{Lemma}[Lemma 6.8 \cite{bostv1}]\label{pre-period-bigger-than-1}
A non-degenerate preperiodic comajor $c$ of preperiod at least $2$ is a two
sided limit leaf of $C_sCL$ or an edge of a finite gap $H$ of $C_sCL$ all of whose
edges are limits of comajors of $C_sCL$ disjoint from $H$.
\end{Lemma}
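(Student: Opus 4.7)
The plan is to mirror the dichotomy used in the proof of Theorem \ref{dense-cmajors}, combining it with Lemmas \ref{appr-under}, \ref{appr-over} and the dendritic q-lamination from Lemma \ref{l:misiu}. The first observation is that since $c$ has preperiod $n\ge 2$, Lemma \ref{cmajor-end-points} forces both endpoints of $c$ to be preperiodic of preperiod $n$; in particular $\si_3(c)$ is not periodic, which is precisely the hypothesis that licenses Lemma \ref{appr-over} throughout.

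I would then split the argument according to how $c$ sits inside its pullback lamination $\lam(c)$. If there are sequences of leaves of $\lam(c)$ converging to $c$ from both sides, then Lemma \ref{appr-under} (applied to the sequence under $c$) and Lemma \ref{appr-over} (applied to the sequence over $c$) together produce comajors of $C_sCL$ approaching $c$ from both sides, exhibiting $c$ as a two-sided limit leaf of $C_sCL$. Otherwise, approximation in $\lam(c)$ fails from one side, so $c$ is an edge of a finite gap $H$ of $\lam(c)$ with all vertices on the opposite side. As in case (b) of the proof of Theorem \ref{dense-cmajors}, $H$ must be preperiodic of preperiod $\ge 2$, for otherwise $\si_3(c)$ would eventually map to an edge of $Q_c$ and $c$ would be co-periodic; and, by the strip argument via Proposition \ref{str-prop} applied to the longest edge of $H$, every edge of $H$ is the short sibling of a legal major pair and hence a comajor by Lemma \ref{pull-back-lam1}. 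Invoking Lemma \ref{l:misiu} with an endpoint of $c$ then realizes $H$ as a preperiodic gap of the dendritic q-lamination $\hlam$ and supplies, for each edge of $H$, a sequence of comajors of $C_sCL$ disjoint from $H$ approximating that edge from the outside.

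What remains is upgrading $H$ from a gap of $\hlam$ to a gap of $C_sCL$. Since $C_sCL$ is a q-lamination (Theorem \ref{t:oldmain}) and each edge of $H$ is a leaf of $C_sCL$, the vertices of $H$ lie in a single $\sim_{C_sCL}$-class whose convex hull is a finite gap $H^\ast\supseteq H$ of $C_sCL$. If $H^\ast$ had an extra vertex $v\notin H$, then $v$ would lie on the arc determined by the endpoints of some edge $e$ of $H$, and for large $j$ an approximating comajor $e_j\to e$ disjoint from $H$ would separate $v$ from every vertex of $H$, contradicting the fact that they all belong to the same $\sim_{C_sCL}$-class; thus $H^\ast = H$. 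The main obstacle, and the step that requires the most care, is the verification in the second case that every edge of $H$ is in fact a comajor: this is the place where the strip lemma and the legal-pair characterization must be combined with the precise structure of $\lam(c)$ around the preperiodic gap $H$.
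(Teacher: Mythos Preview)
The paper does not prove this lemma; it is quoted from \cite{bostv1} (as Lemma 6.8 there), so there is no in-paper proof to compare against. Evaluating your argument on its own merits, the decisive problem is a circularity: in the gap case you invoke Lemma \ref{l:misiu} to ``supply, for each edge of $H$, a sequence of comajors of $C_sCL$ disjoint from $H$ approximating that edge from the outside,'' but look at how the paper justifies that final clause of Lemma \ref{l:misiu} --- it is established precisely \emph{by citing Lemma \ref{pre-period-bigger-than-1}}. So you are invoking the very statement you are trying to prove. Your subsequent argument that $H^\ast=H$ also rests on these approximating comajors, so the circularity propagates all the way to the conclusion.

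The remaining skeleton is sound and tracks the pattern of Theorem \ref{dense-cmajors}: the dichotomy via $\lam(c)$, the use of Lemmas \ref{appr-under} and \ref{appr-over} in the two-sided case, and the strip argument (via Proposition \ref{str-prop}) showing that every edge of $H$ is legal and hence a comajor. To break the circularity you must avoid the last sentence of Lemma \ref{l:misiu} and argue directly: a dendritic q-lamination has no isolated leaves, so every edge $e$ of $H$ is approached from the side away from $H$ by leaves of $\hlam$; then feed those approximating leaves into Lemma \ref{appr-under} or Lemma \ref{appr-over} (applied with $e$ in the role of the comajor --- which you have already shown $e$ to be) to upgrade them to comajors of $C_sCL$ disjoint from $H$. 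You should also treat explicitly the case where the gap $H$ of $\lam(c)$ lies under $c$ rather than over it, since the argument you borrowed from Theorem \ref{dense-cmajors}(b) is written only for the latter orientation.
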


\begin{definition}
We say a gap $G$ \emph{weakly separates} two leaves $\ell_1$ and $\ell_2$ if $\ell_1\sm
G$  and $\ell_2\sm G$ are nonempty sets in two different components of $\ol\disk \setminus G$.
Similarly we say a leaf $\ell$ \emph{weakly separates} two leaves $\ell_1$ and $\ell_2$
if $\ell_1\sm \ell$ and $\ell_2\sm \ell$ are nonempty sets in two different components of
$\ol\disk \setminus \ell$.
\end{definition}

\begin{Lemma}\label{moving-away-lem1}
Let $\ell'\neq\ell$ be two leaves in a cubic symmetric lamination $\lam$ such
that $\ell\prec \ell'$.
Suppose that: 

\begin{enumerate}
    \item the leaves $\ell$ and $\ell'$ move away from each other under 
    $\sigma_3^k$, 
    \item no leaf weakly separating $\ell$ and $\ell'$ maps to a critical chord of
        $\uc$ under the map $\sigma_3^i$ for $i<k$.
\end{enumerate}

Then, there exists a periodic leaf $y=\ol{ab}$ with $\si^k_3(a)=a,$
$\si^k_3(b)=b$ that weakly separates $\ell$ and $\ell'$.
\end{Lemma}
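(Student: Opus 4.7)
The plan is to find a $\sigma_3^k$-fixed chord weakly separating $\ell$ and $\ell'$ by applying an intermediate value argument on each of the two boundary arcs of the strip $\mathcal{S}(\ell,\ell')$. Write $f = \sigma_3^k$ and label the endpoints so that $\ell = \ol{p_1 p_2}$ and $\ell' = \ol{q_1 q_2}$ appear in cyclic order $q_1,p_1,p_2,q_2$ on $\uc$; let $J_1 = [q_1,p_1]$ and $J_2 = [p_2,q_2]$ denote the closed circular arcs on $\Bd(\mathcal{S}(\ell,\ell'))$.

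First I would record the precise consequences of the ``moving away'' hypothesis. Since $\ell\in\lam$ moves in, $f(\ell) \in \lam$ and $f(p_i) \in [p_1, p_2]$; since $\ell'$ moves out and $f(\ell') \in \lam$ cannot cross $\ell'$, both $f(q_1)$ and $f(q_2)$ lie in $[q_2, q_1]$. Thus on $J_1$ the map $f$ displaces $q_1$ clockwise out of $[q_1, q_2]$ and $p_1$ counterclockwise into $[p_1, p_2]$; an analogous opposition of signs holds on $J_2$.

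Next, I would produce the fixed points by lifting $f$ to an affine map $\tilde f \colon \R \to \R$ of slope $3^k$ and applying the intermediate value theorem on the lift of $J_1$. Hypothesis (2) is the crucial input: because no leaf weakly separating $\ell$ from $\ell'$ becomes critical under any $\sigma_3^i$ with $i < k$, the iterated images of $J_1$ and $J_2$ do not pinch across a critical chord, and a lift can be chosen under which $\tilde f(\tilde q_1) \le \tilde q_1$ and $\tilde f(\tilde p_1) \ge \tilde p_1$. The IVT then yields $a \in J_1$ with $f(a) = a$; the symmetric argument on $J_2$ yields $b \in J_2$ with $f(b) = b$. The chord $y = \ol{ab}$ has endpoints on opposite arcs of $\Bd(\mathcal{S}(\ell,\ell'))$, so the cyclic order $q_1, a, p_1, p_2, b, q_2$ shows that $y$ weakly separates $\ell$ from $\ell'$, and both endpoints are $\sigma_3^k$-fixed by construction.

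The main obstacle is the lift/IVT step. A priori, if $J_1$ is short compared to the displacements at its endpoints, a naive choice of lift could place both $\tilde f(\tilde q_1) - \tilde q_1$ and $\tilde f(\tilde p_1) - \tilde p_1$ on the same side of zero and block the IVT. Hypothesis (2) is exactly what excludes the folding scenarios that would force this, since any such folding would require a leaf in the strip to become critical before step $k$. Rigorously verifying this point---adapted from the corresponding step in the proof of Lemma~\ref{move-twrds}, with ``moving towards'' replaced by ``moving away'' throughout---is where the bulk of the work will lie.
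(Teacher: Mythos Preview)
Your argument has a structural gap: the lemma requires $y$ to be a \emph{leaf of $\lam$}, not merely a chord with $\sigma_3^k$-fixed endpoints. The paper's proof makes this explicit by working inside $\lam$ throughout, and the applications (e.g., Theorem~\ref{lavaurs-like-alg}) invoke the conclusion as ``there is a periodic leaf $y\in\lam$''. Your IVT argument, even if it succeeds, produces two fixed points $a\in J_1$ and $b\in J_2$ and then joins them by the chord $\ol{ab}$; but nothing forces $\ol{ab}\in\lam$. Indeed each arc $J_i$ may contain several $\sigma_3^k$-fixed points, and a generic pairing of a fixed point on $J_1$ with one on $J_2$ will cross leaves of $\lam$. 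Without $y\in\lam$ the lemma is useless downstream, since one then needs the short pullback of $y$ to be a leaf so that Lemma~\ref{shrt-leaf-pull-back} applies.

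The paper takes an entirely different route that never leaves $\lam$. One first rules out that $\ell,\ell'$ bound a common gap (else $\sigma_3^k$ would strictly enlarge that gap), so the closed family $\Ca\subset\lam$ of leaves weakly separating $\ell$ from $\ell'$ is nonempty. One then isolates the maximal ``moving-in'' subfamily $A\subset\Ca$ (those leaves $m$ such that every leaf between $\ell$ and $m$ also moves in) and takes $y$ to be the leaf of $\ol{A}$ farthest from $\ell$. A short case analysis, using the gap structure of $\lam$, forces $\sigma_3^k(y)=y$; a second case analysis (two-sided limit versus edge of a gap) excludes the flip $\sigma_3^k(a)=b$, so the endpoints are individually fixed. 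Hypothesis~(2) enters only at the very end, to rule out that $y$ collapses through a critical chord before step $k$ --- not, as in your sketch, to control lifts. If you want to salvage an IVT-style argument, you would need to run it on the \emph{space of leaves in $\Ca$} rather than on the boundary arcs, which is essentially what the paper does.
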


\begin{proof}
A gap $G$ of $\lam$ with edges $\ell, \ell'$ does not exist as otherwise the
gap $\sigma_3^k(G)$ would strictly cover the gap $G$. Hence the family of
leaves $\Ca\subset \lam$ that consists of $\ell$, $\ell'$, and the leaves
that weakly separate $\ell$ and $\ell'$ has at least one leaf that weakly separates $\ell$
and $\ell'$. Clearly, $\Ca$ is closed.

Let $A$ be the set of leaves of $\Ca$ that move in under $\sigma_3^k$ such
that for every leaf $m\in A$, if a leaf $n$ weakly separates $\ell$ and $m$, then
$n$ also moves in under the map $\sigma_3^k$. So, all the leaves in $A$ move
in under $\si_3^k$.
Then the closure $\ol{A}$ of $A$ (with respect to the Hausdorff metric) is a family of leaves, too; let $y\in
\ol{A}$ be the leaf of $\ol{A}$ farthest from $\ell$ (i.e., every leaf in
 $A\sm \{\ell, y\}$ weakly separates $\ell$ from $y$). By continuity, either $y\in A$,
or $\si_3^k(y)=y$. We claim that $\si_3^k(y)=y$. Indeed,  suppose that $y$ moves in under
$\si_3^k$.
There are two cases. 
First,  it can be that $y$ is approximated by leaves with endpoints outside $H(y)$ (see Definition \ref{under-dfn}).
However, this contradicts the choice of $y$.
Second, $y$ can be an edge of a gap $G$ with vertices outside of $H(y)$
while all vertices of $\si_3^k(G)$ belong to $\ol{H(y)}$.
If now $\hell$ is
the edge of $G$ with $y\prec \hell$, then $\hell\in A$, a contradiction.

Thus, $y=\ol{ab}=\sigma_3^k(y)$. We claim that $\sigma_3^k$ fixes the
endpoints of the leaf $y$. Assume that $\sigma_3^k$ flips $y$, and consider
cases. If $y$ is a two sided limit leaf and $t\in A$ is close to $y$, then
the leaf $t$ would move out under $\sigma_3^k$, a contradiction. If $y$ is an
edge of a gap $G$, then $y$ is an edge of the gap $G'=\sigma_3^k(G)$, the
gaps $G$ and $G'$ are on both sides of the leaf $y$, and $\sigma_3^k$ maps
one gap to the other.
Hence there is an edge $t\prec y$ of $G$ or $G'$ that
belongs to $A$ but moves out under $\si_3^k$, a contradiction. Finally,
$\si_3^k(y)=y$ is non-degenerate.
\end{proof}

\begin{Lemma}[Lemma 6.3 \cite{bostv1}]\label{shrt-leaf-pull-back}
Let $\lam$ be a cubic symmetric lamination with comajor pair $\{c,-c\}$.
Suppose that a short leaf $\ell_s\in \lam$ with $c\prec \ell_s$ is such that the
leaf $\ell_m=\sigma_3(\ell_s)$ never maps under $\pm\ell_m$.
Then, there
exists a cubic symmetric lamination $\lam(\ell_s)$ with comajor pair
$\{\ell_s,-\ell_s\}$.
\end{Lemma}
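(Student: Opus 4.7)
The plan is to verify that $\{\ell_s,-\ell_s\}$ is a legal pair in the sense of Definition~\ref{d:legal}, so that Lemma~\ref{pull-back-lam1} immediately produces a symmetric cubic lamination $\lam(\ell_s)$ with comajor pair $\{\ell_s,-\ell_s\}$.

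Condition (a) of legality is essentially automatic: since $\ell_s\in\lam$ and $\lam$ is a forward invariant symmetric lamination, every iterated forward image of $\ell_s$ and $-\ell_s$ is a leaf of $\lam$, and leaves of a lamination do not cross.

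For condition (b) --- that no forward image of $\ell_s$ crosses the interior of $\sh(\ell_s)=Q_{\ell_s}\cup(-Q_{\ell_s})$ --- the plan is to argue by contradiction, making essential use of the hypothesis. The key geometric fact is that $Q_{\ell_s}$, the convex hull of the two long/medium siblings $M_{\ell_s},M'_{\ell_s}$ of $\ell_s$, is a \emph{collapsing quadrilateral}: all four of its edges $\sigma_3$-map to $\ell_m$; moreover, the two short external arcs of $Q_{\ell_s}$ cover $H(\ell_m)$, while the two long external arcs, adjacent to $M_{\ell_s}$ and $M'_{\ell_s}$, cover $\uc\setminus H(\ell_m)$. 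Suppose some $n=\sigma_3^{k}(\ell_s)=\sigma_3^{k-1}(\ell_m)$ with $k\ge 1$ crosses the interior of $Q_{\ell_s}$; then the endpoints of $n$ lie in two distinct external arcs of $Q_{\ell_s}$. A case analysis based on the pair of external arcs containing these endpoints shows that either both endpoints of $\sigma_3(n)=\sigma_3^{k}(\ell_m)$ land in $H(\pm\ell_m)$, forcing $\sigma_3^{k}(\ell_m)\prec\pm\ell_m$ and contradicting the hypothesis directly; or $n$ crosses one of the majors $M_{\ell_s},M'_{\ell_s}$, in which case applying Proposition~\ref{str-prop} to the medium/long iterate of $\ell_m$ that is closest to criticality in its forward orbit produces a later iterate entering its own short strips --- again contradicting the hypothesis. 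The $-Q_{\ell_s}$ case follows by the same argument applied to $-\ell_s$, using $\sigma_3(-\ell_s)=-\ell_m$ and the hypothesis for $-\ell_m$.

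The main obstacle is the case analysis in (b): one must track carefully how the endpoints of $\sigma_3(n)$ distribute relative to $H(\pm\ell_m)$ depending on which external arcs of $Q_{\ell_s}$ contain the endpoints of $n$, and one must invoke the nesting condition $c\prec\ell_s$ together with Lemma~\ref{short-leaves} to exclude degenerate configurations where endpoints of $\ell_m$ or its iterates coincide with vertices of $Q_{\ell_s}$ near periodic points of $\sigma_3$.
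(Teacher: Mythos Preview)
Your overall strategy---verify that $\{\ell_s,-\ell_s\}$ is legal and invoke Lemma~\ref{pull-back-lam1}---is exactly the paper's, and your treatment of condition~(a) is correct. The difficulty is in your handling of condition~(b).

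The paper's argument for (b) is shorter because it first extracts the real content of the hypothesis $c\prec\ell_s$: the $(\mathrm{sss})$ sibling collection of $\ell_s$ would cross the majors $M_c,M'_c$ of $\lam$, so by backward invariance the sibling collection of $\ell_s$ inside $\lam$ must be of type $(\mathrm{sml})$. In particular $M_{\ell_s},M'_{\ell_s}\in\lam$ (and by symmetry $-M_{\ell_s},-M'_{\ell_s}\in\lam$). Once this is known, any forward image $n=\sigma_3^{k}(\ell_s)\in\lam$ cannot cross these majors, so your ``$n$ crosses a major'' case simply does not occur. The only remaining possibility is that $n$ has its endpoints on the two short boundary arcs of $Q_{\ell_s}$ (resp.\ $-Q_{\ell_s}$), and then $\sigma_3(n)\prec\ell_m$ (resp.\ $-\ell_m$), contradicting the hypothesis.

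Your plan instead treats ``$n$ crosses a major'' as a live case and proposes to dispose of it via Proposition~\ref{str-prop}. As written this does not work: Proposition~\ref{str-prop} asserts that the iterate of $\ell_m$ closest to criticality has \emph{no} later iterate entering its own short strips, which is the opposite of what you say it ``produces''; and $M_{\ell_s}$ is a \emph{preimage} of $\ell_m$, not a forward iterate, so there is no direct link between ``$n$ crosses $M_{\ell_s}$'' and the short strips of any forward iterate of $\ell_m$. Your stated use of $c\prec\ell_s$ (together with Lemma~\ref{short-leaves}) merely ``to exclude degenerate configurations'' underemploys that hypothesis; its true role is the structural one above, which eliminates your second case outright and makes any appeal to Proposition~\ref{str-prop} unnecessary.
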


\subsection{The description of the L-algorithm}
According to \cite{mil93,mil09}, cubic polynomials with \emph{Fatou domains
whose first return map is of degree 4} are said to be of type B
(Bi-transitive) and cubic polynomials with \emph{two cycles of Fatou domains}
are said to be of type D (Disjoint); in the latter case first return maps on
periodic Fatou domains are, evidently, of degree 2.
We classify co-periodic comajors of in the similar fashion below.
Recall that, by Theorem
\ref{non-degen-cmajors}, co-periodic comajors $c$ generate hyperbolic q-laminations $\hlam(c)$.

The nature of cubic symmetric laminations gives rise to two notions
describing two types of periodic points and related (pre)periodic objects. We
give a general definition that applies to all of them.

\begin{definition}[Type B and type D]\label{d:bd}
A $2n$-periodic point $x$ of $\si_3$ such that $\si_3^n(x)=-x$, is said to be
\emph{of type B}. All other periodic points of $\si_3$ are said to be
\emph{of type D}. A periodic leaf of a symmetric lamination is of type B if
its endpoints are of type B, and of type D otherwise.
A co-periodic leaf of a symmetric lamination is of type B if its image is a
periodic leaf of type B, and of type D otherwise.
\end{definition}


\begin{Lemma}[Corollary 3.7 \cite{bmov13}]\label{l:37}
Suppose that $\ell$ and $\hell$ are two leaves of a $\si_d$-invariant
lamination that share an endpoint and have non-degenerate distinct images.
Then the orientation of the triple of their endpoints is preserved under the
map $\si_d$.
\end{Lemma}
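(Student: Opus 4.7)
I plan to derive the lemma from the gap-invariance of $\si_d$-invariant laminations, recalled earlier in the excerpt. By that result, for each gap $G$ of $\lam$, the map $\si_d^*|_{\mathrm{Bd}(G)}\colon\mathrm{Bd}(G)\to\mathrm{Bd}(\si_d(G))$ is the composition of a monotone map and a positively oriented covering map; in particular, the cyclic order of the vertices of $G$ is preserved, and this cyclic order agrees with the cyclic order of those vertices on $\uc$ (since $G=\mathrm{CH}(G\cap\uc)$).

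The core case is when $\ell$ and $\hell$ are two consecutive edges of a common gap $G$ of $\lam$ meeting at $v$. Then $v$, $a$, $b$ are vertices of $G$, with $a$ and $b$ adjacent to $v$ in the cyclic order around $\mathrm{Bd}(G)$. The hypothesis that $\si_d(\ell)$ and $\si_d(\hell)$ are non-degenerate and distinct ensures that $\si_d(G)$ is also a gap (not collapsed to a leaf or a point) and that $\si_d(v)$, $\si_d(a)$, $\si_d(b)$ are three distinct vertices of it. Gap invariance then gives that the cyclic order of $(v,a,b)$ on $\uc$ is preserved.

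For the general case, I would reduce to the consecutive case by induction on the number of leaves of $\lam$ that meet $v$ and whose other endpoint lies strictly between $a$ and $b$ on $\uc$. When there are no such intermediate leaves, $\ell$ and $\hell$ are already consecutive at $v$ and bound a common gap. Otherwise, pick such an intermediate leaf $\ell'=\overline{vc}$ with $c$ in the open arc $(a,b)$; provided that $\si_d(\ell')$ is non-degenerate and distinct from $\si_d(\ell)$ and $\si_d(\hell)$, applying the inductive hypothesis to the pairs $(\ell,\ell')$ and $(\ell',\hell)$ gives that the cyclic orders of $(v,a,c)$ and $(v,c,b)$ are preserved, which combine (since $c$ lies between $a$ and $b$ in both the source and the image) to give the cyclic order of $(v,a,b)$ preserved.

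The main technical obstacle is handling the finitely many intermediate leaves whose images are degenerate (critical leaves from $v$, with $c$ one of the $d-1$ preimages of $\si_d(v)$ other than $v$) or coincide with $\si_d(\ell)$ or $\si_d(\hell)$. Critical intermediate leaves can be absorbed in the chain argument because their images collapse to the point $\si_d(v)$ and therefore do not introduce a non-trivial intermediate point on $\uc$ obstructing the cyclic-order comparison. For intermediate leaves with coinciding image, backward invariance tightly restricts the configuration: any pullback of $\si_d(\ell)$ sharing the vertex $v$ with $\ell$ must be compatible with the disjoint sibling collection containing $\ell$, and a short case analysis shows these exceptional configurations do not destroy the chain reduction.
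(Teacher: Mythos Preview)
The paper does not give its own proof of this lemma; it is quoted verbatim as Corollary~3.7 of \cite{bmov13}. So there is nothing in-paper to compare against, and I evaluate your argument on its own merits.

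Your core case is correct: when $\ell=\overline{va}$ and $\hell=\overline{vb}$ are consecutive at $v$ they are edges of a common gap $G$; the hypothesis makes $\si_d(v),\si_d(a),\si_d(b)$ pairwise distinct, so $\si_d(G)$ is a gap, and gap invariance (positive orientation on $\Bd(G)\to\Bd(\si_d(G))$) delivers the conclusion.

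The reduction to the core case, however, has a real gap. Your induction is on the \emph{number} of leaves of $\lam$ through $v$ whose other endpoint lies strictly between $a$ and $b$. Nothing in Definition~\ref{inv-lam} forces this number to be finite: the set of leaves through a fixed vertex is only required to be closed, and in an abstract $\si_d$-invariant lamination it may well be infinite (even uncountable). In that situation there is no well-founded descent; splitting at an intermediate $\ell'=\overline{vc}$ produces two subproblems that can each still have infinitely many intermediate leaves. You would need either an a~priori finiteness statement (which is not available at this level of generality) or a genuinely different mechanism --- e.g.\ a continuity argument along the closed set $\{c:\overline{vc}\in\lam\}$, or a direct argument from the sibling property (D2)(2), which is closer to how \cite{bmov13} proceeds.

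Separately, your handling of the exceptional intermediate leaves is too thin to count as a proof. For a critical $\ell'=\overline{vc}$ the inductive hypothesis does not apply to either pair $(\ell,\ell')$ or $(\ell',\hell)$, and ``absorbing'' $\si_d(c)=\si_d(v)$ into the chain is not automatic: you must still extract the cyclic-order information across the gaps adjacent to $\ell'$, and that needs an argument. For an intermediate $\ell'$ with $\si_d(\ell')=\si_d(\ell)$, note that (D2)(2) only guarantees the \emph{existence} of one sibling collection of $d$ pairwise disjoint leaves containing $\ell$; it does not assert that every leaf with the same image as $\ell$ belongs to it. So a leaf through $v$ with image $\si_d(\ell)$ is not excluded by the sibling axiom alone, and your ``short case analysis'' has substantive content that you have not supplied.
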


To justify Definition \ref{d:bd} we need the next lemma.

\begin{Lemma}\label{l:bbdd}
A periodic leaf of a symmetric lamination $\lam$ cannot have endpoints of
type B and type D.
\end{Lemma}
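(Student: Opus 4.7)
The plan is to argue by contradiction. Suppose $\ell = \overline{ab}$ is a periodic leaf of $\lam$ of period $k$ with $a$ of type B (so $\sigma_3^n(a)=-a$ for the minimal such $n$, and the $\sigma_3$-period of $a$ is $2n$) and $b$ of type D (so $\sigma_3^i(b)\ne -b$ for every $i$).

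First I would dispose of the case in which $\sigma_3^k$ swaps $a$ and $b$. In that case $b=\sigma_3^k(a)$, and using the commutativity $\sigma_3(-x)=-\sigma_3(x)$ we get
\[
\sigma_3^n(b)=\sigma_3^n(\sigma_3^k(a))=\sigma_3^k(\sigma_3^n(a))=\sigma_3^k(-a)=-\sigma_3^k(a)=-b,
\]
contradicting that $b$ is type D. Hence we may assume $\sigma_3^k$ fixes $a$ and $b$ separately; in particular $2n\mid k$.

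Next I would produce four distinct leaves of $\lam$ sitting at $a$ and $-a$. Set $c=\sigma_3^n(b)$; by type D, $c\ne -b$. Invariance of $\lam$ gives $\sigma_3^n(\ell)=\overline{(-a)c}\in\lam$, and symmetry gives $-\ell=\overline{(-a)(-b)}\in\lam$ and $-\sigma_3^n(\ell)=\overline{a(-c)}\in\lam$. These four leaves are pairwise distinct, because $c\ne -b$ rules out all coincidences. Thus $\ell$ and $-\sigma_3^n(\ell)$ are two distinct leaves of $\lam$ both incident to $a$, and by symmetry $-\ell$ and $\sigma_3^n(\ell)$ are two distinct leaves of $\lam$ both incident to $-a$.

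I would then derive a contradiction via Kiwi's theorem (Theorem \ref{t:kiwi}). A short check shows that the orbits of $a$, $b$, and $-b$ are pairwise disjoint cycles of $\sigma_3$: the orbit of $a$ contains $-a$, so if it met the orbit of $b$ then that orbit would contain $-b$, violating type D, and similarly for $-b$. Hence the endpoint set $\{a,-a,b,-b,c,-c\}$ meets at least three distinct cycles of $\sigma_3$. In the simplest subcase $c=b$ (which happens when the period of $b$ divides $n$) the four leaves already form a quadrilateral $Q$ with vertices $\{a,-a,b,-b\}$ spanning three cycles. If $Q$ is a gap of $\lam$, Kiwi's theorem forbids a periodic gap of a cubic lamination with more than $d-1=2$ cycles of vertices unless it is a fixed return $d$-gon, and $Q$ is a quadrilateral, contradiction. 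Otherwise some leaf of $\lam$ subdivides $Q$; the only candidates are the two diameters $\overline{a(-a)}$ and $\overline{b(-b)}$ (they mutually cross, so at most one is present). Either alternative would be ruled out: $\overline{b(-b)}$ carves off a triangle $\{a,b,-b\}$ that is a periodic gap with three cycles but not fixed (since $\sigma_3(a)$ cannot land in $\{a,b,-b\}$ as orbits are disjoint and $a\ne\sigma_3(a)$), violating Kiwi; $\overline{a(-a)}\in\lam$ forces iterated diameters $\overline{\sigma_3^i(a)\sigma_3^i(-a)}$ to lie in $\lam$, which cross pairwise at the origin as soon as $n>1$, while for $n=1$ the medium leaf $\ell$ forces its long sibling (by Lemma \ref{l:sml} and Definition \ref{d:sibli}) into $\lam$, and that sibling demonstrably crosses the diameter $\overline{a(-a)}$. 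The general subcase $c\ne b$ is handled by the same template applied to the (at most) six-vertex polygon carved out by the four leaves together with further iterates in the orbit of $\ell$ and $-\ell$.

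The main obstacle will be the bookkeeping in the subdivision cases, i.e., showing uniformly that whenever the base polygon of $\ell_1,\ldots,\ell_4$ is not itself a gap of $\lam$, any extra leaf of $\lam$ inside produces either a subgap that violates the cycle bound of Kiwi (possibly via Corollary \ref{no-diagonal} when the subgap is preperiodic), or a long/medium leaf whose forced sibling (by Definition \ref{inv-lam}(D2)(2) and Lemma \ref{l:sml}) crosses a leaf already present in $\lam$. Making this work uniformly in $n$ and in the $\sigma_3$-period of $b$ (rather than case by case on small periods) is the chief technical point.
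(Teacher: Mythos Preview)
Your setup is correct and in fact produces exactly the configuration the paper uses: the two leaves $\ell=\overline{ab}$ and $-\sigma_3^n(\ell)=\overline{a(-c)}$ at the vertex $a$, together with their $\sigma_3^n$-images $\overline{(-a)c}$ and $\overline{(-a)(-b)}$ at $-a$. But the route you then take through Kiwi's theorem is both unnecessarily heavy and genuinely incomplete. In the general case $c\ne b$ the four leaves do not bound a polygon at all---they form two disjoint wedges at $a$ and at $-a$---so there is no ``six-vertex polygon'' to which Theorem~\ref{t:kiwi} could be applied; you would first have to manufacture an actual periodic gap of $\lam$ containing these vertices, and nothing you have written does that. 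Even in the $c=b$ quadrilateral case the subdivision analysis is sketchy: you must also exclude that $Q$ is precritical before invoking the cycle bound, and the $n=1$ sibling argument relies on a specific placement of $a,b$ that you have not justified.

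The paper sidesteps all of this with a one-line orientation argument via Lemma~\ref{l:37}. Having produced the cone $\{\overline{ab},\,\overline{a(-c)}\}$ at $a$, apply $\sigma_3^n$: the cone maps to $\{\overline{(-a)c},\,\overline{(-a)(-b)}\}$ at $-a$ (using $\sigma_3^{2n}(b)=b$, which the paper asserts). Now compare orientations: the image triple $(c,-a,-b)$ is the rotation by $\tfrac12$ of $(-c,a,b)$, which in turn is the reversal of the original triple $(b,a,-c)$. Since rotation by $\tfrac12$ preserves circular orientation while reversal flips it, $\sigma_3^n$ reverses the orientation of this triple of endpoints, contradicting Lemma~\ref{l:37}. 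That is the entire argument. Your swap-case disposal and construction of the four leaves are exactly the right preliminary steps; the missing idea is to finish with this orientation check rather than a gap/Kiwi argument.
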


\begin{proof}
Suppose that $\ell=\ol{xy}$ is a periodic leaf of $\lam$ such that $x$ is of
type B while $y$ is of type D. Then $x$ is of period $2n$ and
$\si_3^n(x)=-x$. It follows that $y$ is also of period $2n$ but
$\si_3^n(y)\ne -y$. Since $\ol{(-x) (\si_3^n(y))}=\si_3^n(\ell)$ is a leaf of
$\lam$, then the leaf $\ol{x (-\si_3^n(y))}$ is a leaf of $\lam$, too. Thus,
the cone of leaves $\ell=\ol{xy}$ and $\ol{x (-\si_3^n(y))}$ is mapped by
$\si_3^n$ to the cone of leaves $\ol{(-x) (\si_3^n(y))}$ and $\ol{(-x) (-y)}$.
However it is easy to see that the orientation of the triple $(y, x,
-\si_3^n(y))$
is opposite to the orientation of the
triple $(\si_3^n(y), -x, -y)$.
This contradicts Lemma
\ref{l:37} and completes the proof.
\end{proof}

Evidently, the $\si_3$-image of an object of type B (D) is an object of the
same type; co-periodic comajors can be either of type B or of type D.
Also,
Definition \ref{d:bd} allows us to talk about majors, comajors, and minors of
types B or D. In the type B case a periodic major $M=\ol{ab}$ eventually maps
to $-M$ so that $a$ and $b$ of $M$ eventually map to the $-a$ and $-b$,
respectively.
In the type D case, the orbits of majors are disjoint.
Thus, if a co-periodic comajor $c$ is of type B, then
the lamination $\hlam(c)$ from Theorem \ref{non-degen-cmajors}
has a pair of symmetric Fatou gaps whose first return map is of
degree $4$; if $c$ is of type D then $\hlam(c)$ has a pair of symmetric Fatou
gaps whose first return map is of degree 2.

\begin{definition}\label{d:ofnumber}
A periodic point (leaf) of type B and period $2n$ is said to be \emph{of
block period $n$}. A periodic point (leaf) of type D and period $n$ is said to be
\emph{of block period $n$}.
A co-periodic leaf is said to be
\emph{of block period $n$} if its image is of block period $n$.
\end{definition}

In \cite{bostv1} we considered the map $\tau$ that rotates the unit disk by
$180$ degrees.
If $\lam$ is a cubic symmetric lamination, then $\tau$ acts on
leaves and gaps of $\lam$. We will also interchangeably use the notation $-\ell$
for $\tau(\ell)$ and $-G$ for $\tau(G)$ where $\ell$ is a leaf of $\lam$ and
$G$ is a gap of $\lam$. Define the map $g_j=\tau\circ\sigma_3^j:\lam
\rightarrow\lam$ for some $j$. Lemma \ref{moving-away-lem2} is 
similar to Lemma \ref{moving-away-lem1}. 
We state it
without proof.

\begin{Lemma}\label{moving-away-lem2}
Let $\ell'\neq\ell$ be two leaves in a cubic symmetric lamination $\lam$ such
that $\ell\prec \ell'$. Suppose that: 

\begin{enumerate}
    \item[(i)] the leaves $\ell$ and $\ell'$ move away from each other
        under 
        $g_k$, 
    \item[(ii)] no leaf weakly separating $\ell$ and $\ell'$ maps to a critical
        chord of $\uc$ under the map $g_i$ for $i<k$.
\end{enumerate}

Then, there exists a periodic leaf $y$ of period $1$ under the map $g_k$ that
weakly separates $\ell$ and $\ell'$.
\end{Lemma}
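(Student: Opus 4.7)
The plan is to transport the proof of Lemma \ref{moving-away-lem1} verbatim, with $\sigma_3^k$ replaced by $g_k = \tau \circ \sigma_3^k$, using at each step that $\tau$ is an orientation-preserving isometry of $\ol{\disk}$ that, because $\lam$ is symmetric, sends leaves and gaps of $\lam$ to leaves and gaps of $\lam$. In particular, $g_k$ acts on $\lam$, and hypothesis (ii) guarantees that $g_k$ is a well-defined non-degenerate chord map on every leaf weakly separating $\ell$ and $\ell'$, so that the notions of ``moving in'' and ``moving out'' make sense throughout the intervening region. The rotation $\tau$ also preserves $\prec$ in the sense that $\tau(R(n)) = R(-n)$, so comparisons between a leaf and its image under $g_k$ reduce to comparisons via $\sigma_3^k$ composed with the antipodal symmetry.

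First I would rule out the existence of a gap $G$ of $\lam$ with edges $\ell$ and $\ell'$: since $\ell$ moves out and $\ell'$ moves in under $g_k$, the gap $g_k(G) = -\sigma_3^k(G)$ would strictly contain $G$ on both sides of the strip between $\ell$ and $\ell'$, which is impossible. Next, following the original argument, form the closed family $\Ca \subset \lam$ consisting of $\ell$, $\ell'$, and every leaf of $\lam$ that weakly separates $\ell$ from $\ell'$. Let $A \subset \Ca$ be the collection of those leaves $m$ such that $m$ moves in under $g_k$ and every leaf of $\Ca$ weakly separating $\ell$ from $m$ also moves in under $g_k$. Since $\ell'$ moves in, $A$ is nonempty; let $y$ be the leaf in the Hausdorff closure $\ol{A}$ farthest from $\ell$.

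The central step is to show $g_k(y) = y$. If $y$ were to move strictly in under $g_k$, then either $y$ is a two-sided limit of leaves of $\lam$, in which case some of these limits lie farther from $\ell$ than $y$ and still belong to $A$ by continuity, contradicting the choice of $y$; or $y$ is an edge of a gap $G$ with the remaining vertices outside $H(y)$, in which case all vertices of $g_k(G)$ lie in $\ol{H(y)}$, and the edge $\hell$ of $G$ with $y \prec \hell$ then lies in $A$, again a contradiction. Hence $g_k(y) = y$ as an unordered chord, which is the conclusion required by the statement.

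I would expect the main obstacle to be the case analysis at the boundary of $\ol{A}$, particularly the subcase in which $y$ is an edge of a gap that is flipped by $g_k$: as in the original argument, one uses that $g_k(G)$ sits on the side of $y$ opposite to $G$ and that some edge of $G \cup g_k(G)$ strictly under $y$ would then have to belong to $A$ while moving out under $g_k$. This is where one genuinely needs hypothesis (ii) to ensure none of the leaves involved collapse under an intermediate $g_i$, and where the orientation-preserving character of both $\sigma_3|_{\uc}$ restricted to appropriate arcs and of $\tau$ (together with Lemma \ref{l:37}) is used to force a contradiction of the type arising in the proof of Lemma \ref{moving-away-lem1}.
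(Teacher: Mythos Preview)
Your approach is exactly what the paper intends: it states Lemma \ref{moving-away-lem2} without proof, saying only that the argument is similar to Lemma \ref{moving-away-lem1}, so transporting that proof with $g_k$ in place of $\sigma_3^k$ is precisely right. One minor slip: it is $\ell$, not $\ell'$, that moves in when the leaves move away from each other (so $\ell \in A$ is what gives $A \neq \emptyset$), and since the conclusion here only requires $g_k(y) = y$ as a chord, the endpoint-fixing/flipped-gap analysis from Lemma \ref{moving-away-lem1} that you worry about in your last paragraph is actually not needed.
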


The next lemma deals with dynamics of comajors.

\begin{Lemma}\label{l:distinct}
Suppose that $c'\prec c$ are distinct co-periodic comajors that are
leaves of a lamination $\lam$.
Then there is no finite gap $H$ of $\lam$ such that both
$c'$ and $c$ are edges of $H$. In particular, comajors of type B or
\end{Lemma}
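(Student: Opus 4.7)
Assume there is a finite gap $H$ of $\lam$ with both $c$ and $c'$ as edges. By Lemma \ref{pre-period-1}, $c$ and $c'$ have no common endpoint; hence they are non-adjacent edges of $H$, and the relation $c'\prec c$ forces $H\subset\ol{R(c)}\sm R(c')$, so $\|c'\|<\|c\|$. Let $n, n'$ be the $\si_3$-periods of $\si_3(c),\si_3(c')$. Since a non-degenerate comajor is not $\si_3$-periodic (Lemma \ref{cmajor-end-points}), $\si_3^n(c)\in\{M_c,M'_c\}$ and $\si_3^{n'}(c')\in\{M_{c'},M'_{c'}\}$; Definition \ref{inv-lam}(D2)(2) then places all eight leaves $\pm M_c,\pm M'_c,\pm M_{c'},\pm M'_{c'}$ in $\lam$. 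Because $\|c'\|<\|c\|$, the majors of $c'$ are strictly closer to criticality than those of $c$.

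\textbf{Next I analyze the eventual periodic image of $H$.} By Theorem \ref{nwt-thm}, some iterate $H_k=\si_3^k(H)$ is periodic, of period $P$ say, and $\si_3^k(c),\si_3^k(c')$ are distinct $\si_3$-periodic edges of $H_k$. Apply Lemma \ref{fn-gaps}. In case (a), the first return $g=\si_3^P$ acts as a rotation on the edges of $H_k$; the rotation must be non-trivial, since a trivial rotation would make $H_k$ a fixed-return $m$-gon with $m\ge 4$, contradicting Kiwi's Theorem \ref{t:kiwi}. Hence all edges of $H_k$ lie in a single $g$-orbit. In case (b), the edges split into two $\si_3$-orbits and $H_k$ eventually maps to $-H_k$; the role of $g$ is then played by the twisted map $g_j=\tau\circ\si_3^j$, which fixes $H_k$ and shifts its edges cyclically by one.

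\textbf{The contradiction comes from producing a separating periodic leaf.} I apply Lemma \ref{moving-away-lem1} in case (a), and Lemma \ref{moving-away-lem2} in case (b), to the pair $c'\prec c$ under a suitable intermediate power of the first-return map (respectively, of $g_j$): the non-trivial rotational (respectively, twisted) structure forces $c$ and $c'$ to move in opposite directions at some power, while no leaf weakly separating them has yet mapped to a critical chord (because we stop before completing the full cycle). The respective lemma then yields a periodic leaf $y$ weakly separating $c$ from $c'$ in $\lam$. But such a leaf would cut the finite gap $H$ into two pieces, contradicting that $H$ is a single gap. The main obstacle is case (b) with $n\ne n'$: the two periodic edges then lie in different $\si_3$-orbits of $H_k$, and one must locate the correct iterate of $g_j$ at which both leaves exhibit the opposite motion, using the fact that $H_k$ eventually maps to $-H_k$ to select this iterate.
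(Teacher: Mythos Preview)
Your plan has a genuine gap at the central step, and the paper's argument is both simpler and essentially forced here. The paper passes immediately to the minors $m=\sigma_3(c)$ and $m'=\sigma_3(c')$, which are periodic edges of the periodic gap $G=\sigma_3(H)$ with $m'\prec m$; Lemma~\ref{fn-gaps} then says the first-return dynamics on $G$ is rotational (case (a)) or $2$-rotational (case (b)), so $m$ eventually maps to $m'$ or to $-m'$. Since $c,c'$ are disjoint co-periodic comajors (Lemma~\ref{pre-period-1}), one has $\|m'\|<\|m\|$, and this contradicts Lemma~\ref{short-leaves}: a minor is the shortest leaf in its forward orbit.

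Your route via Lemmas~\ref{moving-away-lem1} and~\ref{moving-away-lem2} breaks down because the hypothesis ``$c'$ and $c$ move away from each other'' is \emph{never} satisfied. Since $c$ is a comajor with $\|c\|<\tfrac16$, its image $m=\sigma_3(c)$ is a minor with $\|m\|=3\|c\|$, and by Lemma~\ref{short-leaves} every forward iterate of $m$ has length $\ge\|m\|>\|c\|$; hence $\sigma_3^j(c)$ cannot lie in $R(c)$ for any $j\ge 1$, i.e.\ $c$ always moves \emph{out}. The identical argument shows $c'$ always moves out. Thus the inner leaf $c'$ never moves in, and the pair is never in the ``move away'' configuration of Definition~\ref{d:symmetr}, no matter which power you choose. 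Applying the lemmas to $m'\prec m$ instead meets the same obstruction: $m'$ is itself a minor and never moves in. The rotational structure of the periodic gap does not help here, because the edges of $H_k$ that $g$ permutes all lie on the same side of any given edge $e'$, opposite to $R(e')$. The moving-away/towards machinery is the wrong tool; the length contradiction via Lemma~\ref{short-leaves} is what actually closes the argument.
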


\begin{proof}
The leaves $m'=\sigma_3(c')\prec m = \sigma_3(c)$  are periodic. By way of
contradiction assume that both are edges of a periodic gap $\si_3(H)=G$ of
$\lam$. Then their endpoints stay in the same circular order along their
periodic orbits. By Lemma \ref{fn-gaps}, if $G$ is \emph{1-rotational}, then
the leaf $m$ will eventually map to the leaf $m'$, and if $G$ is
\emph{2-rotational}, then the leaf $m$ will eventually map to the leaf $-m'$,
in either case contradicting that $m$ is the shortest leaf in its orbit (see
Lemma~\ref{short-leaves}).
\end{proof}

Now, the main theorem needed for the L-algorithm is as follows.

\begin{theorem}\label{lavaurs-like-alg}
Suppose that co-periodic comajors $c$ and $c'$ have the
following properties:

\begin{enumerate}
    \item[(i)] $c'\prec c$,
    \item[(ii)] both $c$ and $c'$ are either of type B or type D, and
    \item[(iii)] $c$ and $c'$ have the same block period $n$.
\end{enumerate}

Then there exists a co-periodic comajor $d$
with $c'\prec d\prec c$ such that $d$ is of block period $j<n$.
\end{theorem}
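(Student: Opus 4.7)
The plan is to apply Lemma~\ref{moving-away-lem1} (in type D) or Lemma~\ref{moving-away-lem2} (in type B) to the pair of majors $(M_{c'}, M_c)$, obtain a periodic leaf $y$ weakly separating them, and then convert the short sibling $d$ of $y$ into a new co-periodic comajor via Lemma~\ref{shrt-leaf-pull-back}. Both $c$ and $c'$ live in a common cubic symmetric lamination constructed by combining the pullback laminations of Theorem~\ref{non-degen-cmajors}; their compatibility follows from the q-lamination structure of $C_sCL$ (Theorem~\ref{t:oldmain}) and from Lemma~\ref{pre-period-1}.

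First I would verify the static picture: $c'\prec c$ forces $p'\prec p$ for $p=\sigma_3(c)$, $p'=\sigma_3(c')$, and, after matching the (sml) sibling collections (Lemma~\ref{l:sml}), also $M_{c'}\prec M_c$. By Lemma~\ref{short-leaves}, every forward image of $c$ has length at least $3\|c\|>\|c\|$, so $c$ and $c'$ themselves always \emph{move out} under every iterate, and the moving-away configuration cannot be obtained from them. The pair $(M_{c'}, M_c)$, however, consists of medium/long leaves whose short arcs have length close to $\frac{1}{3}$, so the images $\sigma_3^k(M_c)=p_{k-1}$ and $\sigma_3^k(M_{c'})=p'_{k-1}$ can fit inside $H(M_c)$ or $H(M_{c'})$ and each direction of motion becomes possible. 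The central claim is that for some minimal $1\le k<n$, $M_{c'}$ moves in while $M_c$ moves out under $\sigma_3^k$ (resp.\ $g_k$), in the sense of Definition~\ref{d:symmetr}. I would establish this by tracking the relative order of $p_{j-1}$ and $p'_{j-1}$ for $j=1,\dots,n$: starting from $p'\prec p$ at $j=1$, the nested order must break before the orbits close up.

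Once such a $k$ is found, its minimality guarantees that no leaf between the majors maps to a critical chord under an earlier iterate (the secondary hypothesis of Lemma~\ref{moving-away-lem1}), and so the lemma yields a $\sigma_3^k$-periodic (resp.\ $g_k$-fixed) leaf $y$ weakly separating $M_{c'}$ and $M_c$. The short sibling $d$ of $y$ within its (sml) collection is obtained by the same $\pm\frac{1}{3}$ endpoint shift that relates $c$ to $M_c$; since this shift preserves nesting, $d$ lies in the strip $R(c)\setminus R(c')$, i.e.\ $c'\prec d\prec c$. Periodicity of $y$ together with Lemma~\ref{short-leaves} ensures that the forward orbit of $\sigma_3(d)=y$ cannot enter the short strips of $\pm y$, so the hypotheses of Lemma~\ref{shrt-leaf-pull-back} (applied with $c'$ in the role of the existing comajor) are met. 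The resulting pullback lamination $\lam(d)$ has $\{d,-d\}$ as its comajor pair, exhibiting $d$ as a co-periodic comajor of block period $k<n$.

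The hard part will be the combinatorial argument producing the minimal $k<n$. The scenario to rule out is that the nesting $p'_{j-1}\prec p_{j-1}$ persists for every $j\le n$: combining Lemma~\ref{l:sml}, Lemma~\ref{fn-gaps}, Corollary~\ref{no-diagonal}, and the degree-$2$ structure of the central Fatou gaps of $\hlam(c)$ and $\hlam(c')$ (Theorem~\ref{non-degen-cmajors}), one should deduce that such persistent nesting would force either $c=c'$ or a proper divisor of $n$ as the actual block period of $p$, either way contradicting the hypotheses.
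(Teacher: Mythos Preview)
Your outline reaches for the right endgame (a periodic separator via a moving-away lemma, then its short sibling as the new comajor), but two essential ingredients are missing. The first is the ambient lamination. You assert that $c$ and $c'$ ``live in a common cubic symmetric lamination constructed by combining the pullback laminations of Theorem~\ref{non-degen-cmajors}'', but $\hlam(c)$ and $\hlam(c')$ have different critical Fatou gaps and there is no way to merge them into a single invariant lamination. The paper instead picks an auxiliary preperiodic point $p\in H(c')$ of preperiod $>1$ and large period and invokes Lemma~\ref{l:misiu} to obtain a \emph{dendritic} q-lamination $\lam$ with finite critical sets $\pm\Delta$ in which both $c$ and $c'$ (hence the minors $m=\sigma_3(c)\succ m'=\sigma_3(c')$) are honest leaves. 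This $\lam$ is what makes Lemmas~\ref{moving-away-lem1}/\ref{moving-away-lem2} applicable, and, crucially, it furnishes the inward-moving partner that your pair does not obviously provide: note that $m'$ itself, being a minor, is the shortest leaf in its orbit and therefore never moves in, so the pair $(m',m)$ cannot be used directly, and your ``track the nesting until it breaks'' sketch for the majors is not a proof. What the paper does is find the least $k$ with $\pm\Delta$ separating $\sigma_3^k(m)$ from $\sigma_3^k(m')$, argue by an orientation-flip count that $k<n-1$, pull $\Delta$ back to a gap $\Delta^*$ separating $m$ from $m'$, and take the edge $\ell^*$ of $\Delta^*$ nearest $m$ as the inward-moving leaf (it moves in because $\sigma_3^{k+1}(\Delta^*)=\sigma_3(\Delta)\prec m'$, while $m$ moves out since a minor never shortens). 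The dendritic lamination is exactly what manufactures this $\ell^*$.

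The second gap is the passage from a periodic separator $y$ to a comajor. Your sentence ``Periodicity of $y$ together with Lemma~\ref{short-leaves} ensures that the forward orbit of $\sigma_3(d)=y$ cannot enter the short strips of $\pm y$'' is wrong: Lemma~\ref{short-leaves} bounds lengths only and says nothing about whether some earlier iterate $\sigma_3^i(y)$ lands under $y$ or under $-y$; if it does, the hypothesis of Lemma~\ref{shrt-leaf-pull-back} fails and $d$ need not be legal. The paper handles this by collecting \emph{all} $\sigma_3$-periodic and all $g_j$-fixed leaves of block period $<n$ that separate $m$ and $m'$, selecting one of \emph{minimal} block period, and then reapplying Lemmas~\ref{moving-away-lem1} and~\ref{moving-away-lem2} to show that a smaller-period separator would otherwise exist; only after this minimality step does Lemma~\ref{shrt-leaf-pull-back} yield a genuine comajor $d$ with $c'\prec d\prec c$.
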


\begin{proof}
Choose a preperiodic point $p$ of preperiod bigger than 1 and period bigger
than $n$ in the arc $H(c')$. By Lemma \ref{l:misiu}, there exists a cubic
symmetric dendritic q-lamination $\lam$ with a pair of finite critical
gaps/leaves $\{\Delta,-\Delta\}$ such that $\si_3(p)\in \si_3(\Delta)$ (i.e.,
the critical leaves $\pm \ell$ of $\lam(p)$ are contained in the critical
sets $\Delta$ and $-\Delta$), iterated preimages of $\pm \ell$ converge to
all sides of $\Delta$ and $-\Delta$, so that pullbacks of the critical sets
are dense in $\lam$, and $c$ and $c'$ are leaves of $\lam$. The leaves $m =
\sigma_3(c)$ and $m'=\sigma_3(c')$ are periodic and such that $m'\prec m$.
Since preimages of $\pm \Delta$ are dense in $\lam$, then it follows from
Lemma \ref{l:distinct} that for a minimal $k$, the sets $\Delta$ or $-\Delta$
separates $\si_3^k(m)$ and $\si_3^k(m')$.
Consider cases.

(i): \textit{comajors $c$ and $c'$ are of type D}. Then the periodic orbits of $m$ and
$-m$ (and also $m'$ and $-m'$) are disjoint and have  period $n$. We claim
that $k\ne n-1$. If $k=n-1$, then $\sigma_3^{k-1}(m)$ and
$\sigma_3^{k-1}(m')$ are long/medium siblings of $c$ and $c'$, respectively.
Hence they must be separated by $\Delta$. The circular order of the four
endpoints of $m$ and $m'$ is preserved in the leaves $\sigma_3^{n-1}(m)$ and
$\sigma_3^{n-1}(m')$, but when $\si_3$ is applied one more time, exactly one
of the leaves $\sigma_3^{n-1}(m)$ and $\sigma_3^{n-1}(m')$ flips because of
the critical gap between them. Hence the order among the endpoints of
$\sigma_3^n(m)=m$ and $\sigma_3^n(m')=m'$ cannot be the same as the order
among the endpoints of $m$ and $m'$, which is absurd. Thus, $0<k<n-1$.

    \begin{figure}[h]
    \centering
    \includegraphics[width=0.58\linewidth]{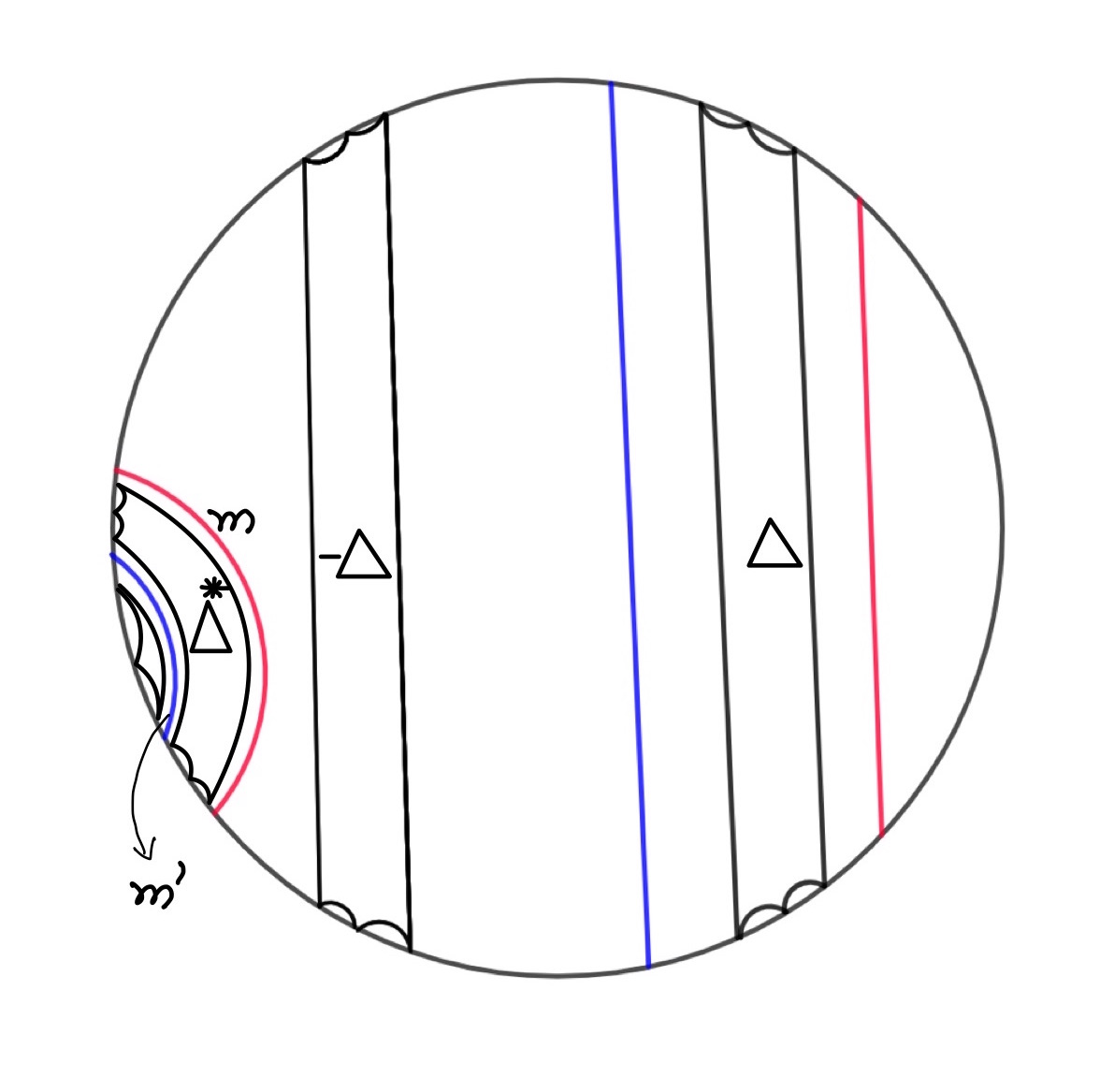}
    \caption{Cubic symmetric lamination $\lam$ with its gaps $\Delta, -\Delta$
    and $\Delta^{\ast}$ separating the leaves $m$ and $m'$ illustrating the proof of case (i).}
    \end{figure}{}

(a): \textit{it is $\Delta$ that separates the leaves $\sigma_3^k(m)$ and
$\sigma_3^k(m')$.} Since the leaves and gaps separating $m$ and $m'$ map
one-to-one under $\sigma_3^k$, there is a set $\Delta^{\ast}$ separating $m$
and $m'$ with $\sigma_3^k(\Delta^{\ast})=\Delta$ and
$\sigma_3^{k+1}(\Delta^{\ast})=\sigma_3(\Delta)\prec m'$. Let $\ell^{\ast}$
be the side of $\Delta^{\ast}$ that separates $m$ and $m'$ and is closest to
the leaf $m$. Then $\ell^{\ast}$ moves in under the map $\sigma_3^{k+1}$. On
the other hand, the leaf $\sigma_3^{k+1}(m)$ is neither under the leaf $m$
nor under the leaf $-m$ because the minor is the shortest leaf in its orbit.
Hence the leaves $m$ and $\ell^{\ast}$ move away from each other under the
map $\sigma_3^{k+1}$.


Let us verify condition (2) from Lemma
\ref{moving-away-lem1}.  Note that $\si_3^k(\ell^{\ast})=M$ is a major of $\lam$.
For $i\le k$, the map $\sigma_3^i$ takes the leaves separating $\ell^{\ast}$ and $m$
 in the strip $\mathcal{S}(\ell^{\ast}, m)$ one-to-one to the leaves
 separating $M$ and $\sigma_3^i(m)$ in the strip $\mathcal{S}(M,\sigma_3^i(m))$.
As there are no critical chords of $\uc$ in $\mathcal{S}(M,\sigma_3^i(m))$,
 no leaf separating $\ell^{\ast}$ and $m$ maps
to a critical chord of $\uc$ under the map $\sigma_3^i$ for $i\leq k$.
Hence,
by Lemma \ref{moving-away-lem1}, there is a periodic leaf $y\in \lam$ of
period $k+1<n$ separating $m$ and $\ell^{\ast}$.

Let $\Ca$ be the collection of the leaves separating $m$ and $m'$. 
Let $C_1$ be the collection of all $\si_3$-periodic leaves in $\Ca$ of period
smaller than $n$. Let $C_2$ be the collection of all fixed leaves under the
maps
$g_k=-\sigma_3^k,$ $0<k<n$ in $\Ca$; we associate the minimal such $k$ with
all leaves from $C_2$. Since $y\in C_1$, then $C_1\ne \emptyset$, but $C_2$
could be empty.


Let $y_1$ be a leaf of the least period $j_1\le k+1<n$ in $C_1$. Choose $y_1$
to be the closest to $m$ among leaves of $C_1$ of period $j_1$. Similarly,
choose a $-\sigma_3^{j_2}$-fixed leaf $y_2$ in $C_2$ such that $j_2$ is the
smallest possible; choose $y_2$ to be the closest to $m$ among
$-\sigma_3^{j_2}$-fixed leaves in $C_2$. If $j_1\leq j_2$, then we claim that
the leaf $d$ which is the short pullback of $y_1$ in $\lam$ is the desired
comajor of block period $j=j_1<n$ (recall that $y_1$ is located between the minors
$m$ and $m'$). By Lemma \ref{shrt-leaf-pull-back}, it suffices to prove that
the leaf $y_1$ neither maps under itself nor under the leaf $-y_1$ under the
map $\sigma_3^i$ where $i$ can be any block period smaller than $j_1$.


(1) If $y_1$ maps under itself under $\sigma_3^i$, for some $i<j_1$, then the leaves
$y_1$ and $m$ move away from each other under
    $\sigma_3^i$.
By Lemma \ref{moving-away-lem1}, there is a $\si_3$-periodic leaf $y_1'$ 
 of period $i<j_1$ separating $m$ and $y_1$; a contradiction with the minimality of
 $j_1$.


(2) If $y_1$ maps under $-y_1$ under
    $\sigma_3^i$ for some $i<j_1$, then the leaf $g_i(y_1)$ is under the
    leaf $y_1$. Now, the leaves $y_1$ and $m$ move away from each other
    under $g_i=-\sigma_3^i$. By Lemma \ref{moving-away-lem2}, there is
    a $-\sigma_3^i$-fixed leaf $y_1'$ that separates $m$ and $y_1$.
    Clearly $y_1'$ separates $m$ and $m'$, too. Then $i<j_1\le j_2$ is
    the block period associated with $y_1$, contradicting the choice of $j_2$.

Thus, the short pullback $d$ of  $y_1$ in $\lam$ is the desired comajor of
block period $j=j_1<n$. Similarly if $j_2<j_1$, then we obtain that the short pullback
$d$ of $y_2$ in $\lam$ is the desired comajor of block period $j=j_2<n$.

    \begin{figure}
    \centering
    \includegraphics[height=0.4\textheight]{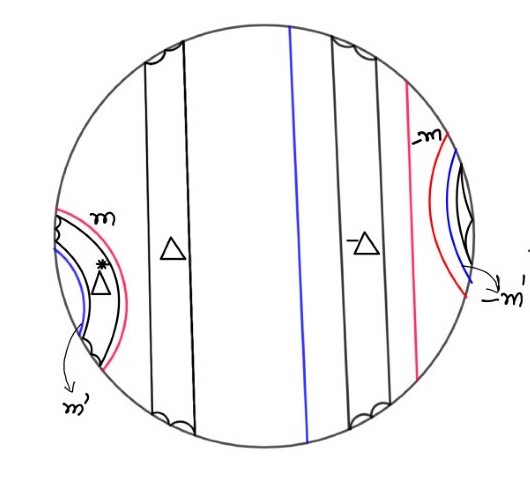}
    \caption{Cubic symmetric lamination $\lam$ with its gaps $\Delta, -\Delta$
    and $\Delta^{\ast}$ separating the leaves $m$ and $m'$ illustrating the proof of case (ii).}
\end{figure}{}

(b): \textit{it is $-\Delta$ that separates the leaves $\sigma_3^k(m)$ and
$\sigma_3^k(m')$, not $\Delta$.}
We use the arguments from case (a) and find a gap $\Delta^{\ast}$
 with $\sigma_3^{k}(\Delta^{\ast})=-\Delta$ separating $m$ and $m'$.
Then we have the gap $\sigma_3^{k+1}(\Delta^{\ast})$ going under the leaf
$-m'$. The only difference in the arguments is that we use Lemma
\ref{moving-away-lem2} first to find a leaf $y$ separating $m$ and $m'$ such
that $g_{k+1}(y)=y$. Thus, the collection $C_2$ is non-empty here whereas
collection $C_1$ could be empty. The rest of the argument follows exactly as
before and we end up with a comajor $d$ between $c$ and $c'$ of a block period
$j<n$.

(ii): \textit{comajors $c$ and $c'$ are of type B}. The leaves $m$ and $m'$ are now
periodic of period $p=2n$ and have symmetric orbits (the orbits of $m$ and
$-m$ are the same). Similarly, the orbits of the leaves $m'$ and $-m'$ are
the same as well.
In this case, the proof is very similar to that of case (i).

First, we show that there exists an integer $k$ with $0<k<n-1$ such
that $\Delta$ or $-\Delta$ separates the leaves
$\sigma_3^k(m)$ and $\sigma_3^k(m')$.
Indeed, let $k$ be the smallest integer between
$0$ and $p=2n$ such that 
the leaves $\sigma_3^k(m)$ and $\sigma_3^k(m')$ are separated by a critical gap/leaf.
As the orbits of
both the leaves $m$ and $m'$ are symmetric, the strips formed by the leaves
$\sigma_3^k(m)$ and $\sigma_3^i(m')$ where $0<i\leq n-1$ are symmetric to the
strips formed by the leaves $\sigma_3^r(m)$ and $\sigma_3^r(m')$ where $n\leq
r<2n$.
It follows that, for the first time, the separation by one of the critical
gaps/leaves $\Delta$ and $-\Delta$ happens during the first half of the cycle,
 i.e., $0<k\leq n-1$.

To see that $k$ cannot be equal to $n-1$, assume the contrary.
Since $\sigma_3^n(m)=-m$ and $\sigma_3^n(m')=-m'$,
 the leaves $\sigma_3^{n-1}(m)$ and
$\sigma_3^{n-1}(m')$ must be long/medium siblings of $-c$ and $-c'$,
respectively. Hence they are separated by $-\Delta$. The circular order of
the four endpoints of $m$ and $m'$ is preserved in the leaves
$\sigma_3^{n-1}(m)$ and $\sigma_3^{n-1}(m')$,
and exactly one of them flips under the next iteration
because of a critical gap between them.
Without loss of generality, assume that the leaf $\sigma_3^{n-1}(m)$
 flips its endpoints when it maps to the leaf
$-m=\sigma_3^n(m)$.
This contradicts the fact that 
 the endpoints of $m$ form two symmetric cycles rather than one cycle of period $4n$.
Thus, $0<k<n-1$.
We have two subcases here similar to case (i).

(a): \textit{it is $\Delta$ that separates the leaves $\sigma_3^k(m)$ and
$\sigma_3^k(m')$.} Then, following the similar arguments as in case(i)
part(a), we find a comajor $d$ of block period $j<k+1=n$ separating the leaves $c$ and $c'$

(b): \textit{it is $-\Delta$ that separates the leaves $\sigma_3^k(m)$ and
$\sigma_3^k(m')$, not $\Delta$.}
Then, using similar arguments to case(i) part(b), we find a comajor $d$ of block period $j<k+1=n$
 separating the leaves $c$ and $c'$.
\end{proof}

Theorem \ref{lavaurs-like-alg} allows us to describe an algorithm for finding
 co-periodic cubic comajors similar to the original Lavaurs algorithm \cite{lav86, lav89}
 for finding periodic quadratic minors.
We call this algorithm the \emph{L-algorithm}.

\begin{center} L-algorithm \end{center}

(1) Draw co-periodic comajors of block period $1$.
It is easy to verify that type D co-periodic comajors of period 1 are
$\overline{\frac{1}{6}\frac{1}{3}}$ and $\overline{\frac{2}{3}\frac{5}{6}}$.
Similarly, type B co-peiodic comajors of block period
$1$ are $\overline{\frac{5}{12}\frac{7}{12}}$ and
$\overline{\frac{11}{12}\frac{1}{12}}$.

We proceed by induction.
Suppose that all preperiodic comajors of block periods from $1$ to $k$
(inclusively) have been drawn. Denote the family of them by $\mathcal{F}_k$.
Consider a component $A$ of $\cdisk \sm \bigcup_{\ell\in \mathcal{F}_n} \ell$.
Then there are two cases.

(a) Suppose that there is a comajor $\ell_0$ such that all points of $A$ are
located under $\ell_0$. Then there may be several comajors $\ell_1,$ $\dots,$
$\ell_s$ located under $\ell_0$ with endpoints in $A\cap \uc$ (this collection
of comajors may be empty). Consider the set $B=\{b_1<\dots<b_t\}$ of
preperiodic points of type B of preperiod 1 and period $k+1$ that belong to
$A\cap \uc$. These points (if any) must be connected to create
several comajors. By Lemma \ref{pre-period-1} these comajors are pairwise
disjoint. By Theorem \ref{lavaurs-like-alg} no two comajors from that
collection can be located so that one of them is under the other one. Hence
$t=2r$ is even and the comajors in question are $\ol{b_1b_2},$ $\dots,$
$\ol{b_{2r-1}b_{2r}}$. We can also consider the set $D$ of preperiodic points
of type D of preperiod 1 and period $k+1$ that belong to $A\cap \uc$. These
points should be connected similar to how points from $B$ were connected,
i.e. \emph{consecutively}.

Do this for all components $A$ for which there is a comajor $\ell_0$ such that all points of $A$ are
located under $\ell$.

(b) There is exactly one component $C$ of $\cdisk \sm \bigcup_{\ell\in
\mathcal{F}_n} \ell$ for which there is no comajor $\ell_0$ with all points
of $A$ located under $\ell$. This is the ``central'' component left after the
closures of all components described in (a) are removed from $\disk$. Evidently, this
component contains the center of $\cdisk$ and is unique.

As before, let $B$ be the set of preperiodic points of type B of preperiod 1
and block period $k+1$ that belong to $C\cap \uc$. However, unlike before let us
divide $B$ into four subsets: $B^1=B\cap (\frac1{12}, \frac16)$, $B^2=B\cap
(\frac13, \frac5{12})$, $B_3=B\cap (\frac7{12}, \frac23),$ and $B_4=B\cap
(\frac{5}{6}, \frac{11}{12})$. Since comajors are short, a comajor cannot
connect two points from two distinct $B$-sets. Hence, as in case (a),
comajors connect points from $B$ consecutively within $B$-sets. If, e.g.,
$B_1=\{b_1<\dots<b_t\}$, then, as in (a), $t=2r$ is even, and the
corresponding comajors are $\ol{b_1b_2},$ $\dots,$ $\ol{b_{2r-1}b_{2r}}$.
Points of type D that belong to $\partial C$ should be treated similarly.

Thus, L-algorithm for cubic symmetric laminations is as follows. First, we
make step 1 and draw the comajors $\overline{\frac{11}{12}\frac{1}{12}},$
$\overline{\frac{1}{6}\frac{1}{3}},$ $\overline{\frac{5}{12}\frac{7}{12}},$
and $\overline{\frac{2}{3}\frac{5}{6}}.$ Then on each next step, say, $k+1,$
we first plot all type B points of preperiod 1 and period $k+1$
and connect them consecutively, starting from the smallest positive
angle. Then we plot all type D proints of preperiod 1 and
period $k+1$ and connect them consecutively, too, starting from the smallest
positive angle.

\noindent\textbf{Acknowledgments.} The results of this paper were reported by
the authors at the Lamination Seminar at UAB. It is a pleasure to express our
gratitude to the members of the seminar for their useful remarks and
suggestions.

\renewcommand\bibname{LIST OF REFERENCES}

\begin{thebibliography}{A}

\bibitem[Sou21]{sou21} S. Bhattacharya, \emph{Unicritical laminations},
    PhD thesis  (2021), UAB, Birmingham, Alabama, USA.

\bibitem[BBS21]{bbs21} S. Bhattacharya, A. Blokh and D.
    Schleicher, \emph{Unicritical Laminations}, arXiv:2101.08101
    (2021)

\bibitem[BMOV13]{bmov13} A. Blokh, D. Mimbs, L. Oversteegen, K. Valkenburg,
    \emph{Laminations in the language of leaves}, Trans. Amer. Math. Soc.,
    \textbf{365} (2013), 5367--5391.

\bibitem[BOPT13]{bopt13} A. Blokh, L. Oversteegen, R. Ptacek, V. Timorin,
    \emph{Laminations from the Main Cubioid},  Disc. Cont. Dyn. Syst.-A,
    \textbf{36} (2016), 4665--4702.


\bibitem[BOPT17]{bopt14} A. Blokh, L. Oversteegen, R. Ptacek, V. Timorin,
    \emph{Combinatorial Models for Spaces of Cubic Polynomials}, C. R. Math.
    (Paris), \textbf{355} (2017), 590--595.

\bibitem[BOPT19]{bopt19}A. Blokh, L. Oversteegen, R. Ptacek, V. Timorin,
    \emph{Models for spaces of dendritic polynomials}, Trans. Amer. Math. Soc.
    \textbf{372} (2019), 4829--4849


\bibitem[BOPT20]{bopt20} A. Blokh, L. Oversteegen, R. Ptacek, V. Timorin,
    \emph{Laminational models for some spaces of polynomials of arbitrary
    degree}, Memoirs of the AMS \textbf{265} (2020), No. 1288.

\bibitem[BOSTV1]{bostv1} A. Blokh, L. Oversteegen, N. Selinger, V. Timorin,
    S. Vejandla, \emph{Symmetric Cubic Laminations}, arXiv:2201.11434 (2022)

\bibitem[BOSTV3]{bostv3} A. Blokh, L. Oversteegen, N. Selinger, V. Timorin,
    S. Vejandla, \emph{Symmetric Cubic Polynomials}, in preparation

\bibitem[BS94]{bs94} 
S. Bullet, P. Sentenac, \emph{Ordered
    orbits of the shift, square roots, and the devil’s staircase,} Math.
    Proc. Cambridge Philos. Soc. 115 (1994), no. 3, 451–481 

\bibitem[Kiw02]{kiw02} J. Kiwi, \emph{Wandering orbit portraits},
    Trans. Amer. Math. Soc. \textbf{354} (2002), 1473--1485.

\bibitem[Lav86]{lav86} P. Lavaurs, \emph{Une description
    combinatoire de l'involution
    d\'efinie par $M$ sur les rationnels \`a d\'enominateur impair},
    C. R. Acad. Sci. Paris S\'er. I Math.
    303 (1986), no. 4, 143–-146.

\bibitem[Lav89]{lav89} P. Lavaurs, \emph{Systemes dynamics holomorphes},
    Explosion de points pe-riodiques paraboliques, These de doctorat,
    Universite Paris-Sud, Orsay(1989)

\bibitem[Mil93]{mil93} J. Milnor, \emph{Geometry and dynamics of quadratic
    rational maps,} Experimental Math. \textbf{2} (1993), 37--83.

\bibitem[Mil06]{mil00} J. Milnor, \emph{Dynamics in One Complex Variable},
    Annals of Mathematical Studies \textbf{160}, Princeton (2006).

\bibitem[Mil09]{mil09} J. Milnor, \emph{Cubic polynomial maps with periodic
    critical orbit I,} in: Complex Dynamics, Families and Friends, ed.
    D. Schleicher, A.K. Peters (2009), 333--411.

\bibitem[Sch09]{sch09} D. Schleicher, \emph{Appendix: Laminations,
Julia sets, and the Mandelbrot set}, in: ``Complex dynamics:
Families and Friends'', ed. by D. Schleicher, A K Peters (2009),
111--130.


 \bibitem[Thu19]{thu19} W.~Thurston, H. Baik, Y. Gao, J. Hubbard, T. Lei,
K. Lindsey, D. Thurston, \emph{Degree $d$ invariant laminations}, in:
``What's next?—the mathematical legacy of William P. Thurston'',  Ann. of
Math. Stud. \textbf{205}, Princeton, NJ (2020), 259-–325.


\bibitem[Thu85]{thu85} W.~Thurston. \newblock {\em The
    combinatorics of iterated rational maps} (1985), 
    in: ``Complex dynamics: Families and Friends'', ed. by D.
    Schleicher, A K Peters (2009), 1--137.

\bibitem[Vej21]{vej21} S. Vejandla, \emph{Cubic symmetric laminations},
    PhD thesis  (2021), UAB, Birmingham, Alabama, USA.
\end{thebibliography}

\end{document}